\numberwithin{equation}{section}
\numberwithin{figure}{section}
\newtheorem{theorem}{Theorem}[section]
\newtheorem{lemma}[theorem]{Lemma}
\newtheorem{corollary}[theorem]{Corollary}
\newtheorem{proposition}[theorem]{Proposition}
\newtheorem{definition}[theorem]{Definition}
\newtheorem{remark}[theorem]{Remark}
\def \arccot {\mathrm{arccot}}
\def \argmax {\mathrm{argmax}}
\def \diag {\mathrm{diag}}
\def \Im {\mathrm{Im}}
\def \Re {\mathrm{Re}}
\def \Res {\mathrm{Res}}
\def \sgn {\mathrm{sgn}}
\def \tr {\mathrm{tr}}
\def \dif {\mathrm{d}}
\def \u {\mathbf{u}}
\def \v {\mathbf{v}}
\def \w {\mathbf{w}}
\def \A {\mathbf{A}}
\def \C {\mathbf{C}}
\def \D {\mathbf{D}}
\def \F {\mathbf{F}}
\def \I {\mathbf{I}}
\def \P {\mathbf{P}}
\def \hu {\hat{u}}
\def \fD {\mathfrak{D}}
\def \fL {\mathfrak{L}}
\def \BC {\mathfrak{B}}
\def \cL {\mathcal{L}}
\def \cO {\mathcal{O}}
\def \CC {\mathbb{C}}
\def \RR {\mathbb{R}}
\def \Lspace {\mathrm{L}}
\def \Hspace {\mathrm{H}}
\def \ACspace {\mathrm{AC}}
\def \reg {{\rm reg}}
\def \ss {{\rm ss}}
\def \ca {{\rm ca}}
\def \cc {{\rm cc}}
\def \nr {{\rm nr}}
\def \scalar {{\rm scal}}
\def \sub {{\rm sub}}
\def \alg {{\rm alg}}
\def \bp {{\rm bp}}
\begin{document}

\title{Reaction-subdiffusion systems and memory:\\ spectra, Turing instability and decay estimates}

\author{Jichen Yang\\ 
Sun Yat-sen University\\
School of Mathematics (Zhuhai)\\
519082 Zhuhai, China\\
{\tt yangjch36@mail.sysu.edu.cn}
\and 
Jens D.M. Rademacher\\
University of Bremen\\
Faculty 3 -- Mathematics\\
28359 Bremen, Germany\\
{\tt jdmr@uni-bremen.de}
}

\maketitle

\begin{abstract}
{The modelling of linear and nonlinear reaction-subdiffusion processes is more subtle than normal diffusion and causes different phenomena. The resulting equations feature a spatial Laplacian with a temporal memory term through a time fractional derivative. It is known that the precise form depends on the interaction of dispersal and reaction, and leads to qualitative differences. We refine these results by defining generalised spectra through dispersion relations, which allows us to examine the onset of instability and in particular inspect Turing type instabilities. These results are numerically illustrated. Moreover, we prove expansions that imply for one class of subdiffusion reaction equations algebraic decay for stable spectrum, whereas for another class this is exponential.}

\medskip
\noindent {\bf Keywords:} reaction-subdiffusion; fractional PDEs; Turing instability; stability; spectrum.

\medskip
\noindent {\bf 2010 Math Subject Classification:} 34A08, 34D20, 35B35, 35R11 
\end{abstract}

\tableofcontents

\section{Introduction}

Anomalous diffusion arises from a transport process with a nonlinear temporal mean squared displacement (MSD) of  particles. Specifically, we say that a transport process exhibits anomalous diffusion if the MSD scales as a nonlinear power-law in time $\langle x^2(t)\rangle\sim t^\gamma$, with anomalous exponent $\gamma\neq1$, whereas the normal diffusion process based on Brownian motion has $\langle x^2(t)\rangle\sim t$. Such an anomalous diffusion process is called subdiffusion if $0<\gamma<1$, and superdiffusion if $\gamma>1$, cf.~ \cite{Metzler2000}. Anomalous diffusion can be described by continuous-time random walks (CTRW), a generalisation of the random walk, in which the waiting times until the next displacement are continuous random variables. In this paper we are concerned with subdiffusion, for which the probability density function (PDF) is non-exponential, which gives a non-Markovian process with memory.  Specifically, such PDF has asymptotic behaviour $w(t)\sim \gamma \tau^\gamma/(\Gamma(1-\gamma)t^{1+\gamma})$, $0<\gamma<1$ for $t\gg 1$~\cite{Mendez2014}. Hence, the particles have relatively high probability to remain at certain positions for very long time so that subdiffusion is, roughly speaking, slower than normal diffusion. The role of the heat equation in this context is taken by the \emph{subdiffusion equation}, which has the form
\begin{equation*}
	\partial_t u =  \fD_{0,t}^{1-\gamma} d\partial_x^2 u,\quad u(x,t)\in\RR, \; x\in\RR,\; t>0,
\end{equation*}
with time-fractional Riemann-Liouville derivative $\fD_{0,t}^{1-\gamma}$, cf.\ Definition \ref{d:RLderivative}, a non-local convolution operator in time that entails the memory, and diffusion coefficient $d$.

\medskip
Due to the non-local term in time, equations with subdiffusion are not dynamical system on the phase space of the natural initial condition $u(x,0)$. In particular, solutions do not form a cocycle since any restart at some $t>0$ requires to prescribe an initial condition on the preceding temporal interval $u(x,s)$, $0\leq s\leq t$. One may interpret this as a variable delay, which is initially zero and extends indefinitely. Indeed, solutions can cross the initial state, need not remain positive or satisfy a maximum principle. The fractional derivative operator also depends explicitly on time so stability of a steady state from a linearisation is not readily determined by spectrum of a time-independent linear operator.

\medskip
In this paper we study stability and instability properties for two classes of subdiffusion-reaction equations and systems. Indeed, the modelling of reactions in the presence of subdiffusion is a subtle task and leads to different models, which essentially come in two types. In the so-called subdiffusion-limited reactions the non-local operator acts on the diffusion terms as well as the reaction terms~\cite{Henry2006,Seki2003a,Seki2003,Yuste2004,Nec2008,Vergara2017}, e.g.,
\begin{align}\label{e:Zacher}
\partial_t u = \fD_{0,t}^{1-\gamma} [d \partial_x^2 u + f(u)]\quad \text{or}\quad \fD_{0,t}^{\gamma}(u-u_0) = \partial_x^2 u + f(u).
\end{align}
Here the reaction is ``slow'' even without subdiffusive transport $d=0$.

In the so-called activation-limited reactions the non-local operator acts on the diffusion terms only, i.e., the reactive process does not depend on the subdiffusive medium \cite{Henry2000,Henry2006,Fedotov2010,Nec2007,Nec2008}. In this paper, we focus on this type and study the following two different models.

\paragraph{Subdiffusion with extra source and sink} This model is derived by adding an extra source or sink term, cf., \eqref{e:Henry2000-balance} or \cite{Henry2000}, and the scalar case reads
\begin{equation}\label{e:Henry2000}
\partial_t u = d \fD_{0,t}^{1-\gamma} \partial_x^2 u + f(u),
\end{equation}
where $d>0$ is the diffusion coefficient and $f(u)$ is the nonlinear reaction term. For the fractional order $\gamma=1$, \eqref{e:Henry2000} becomes the classical reaction diffusion equation with the usual maximum principle. However, it has been shown in \cite{Henry2006} that \eqref{e:Henry2000} with negative linear reaction dynamics, e.g., $f(u)=-u$, possesses a Green's function with negative parts. Heuristically, the reason for the negative parts is that the sink removes particles which have not yet jumped from other positions due to the long waiting time distribution. If $u$ represents an absolute density we therefore obtain a physically unrealistic model, see also \cite{Nepomnyashchy2016}. However, in this paper we are concerned with $u$ modeling a \emph{density perturbation} from a saturated strictly positive state, and such a perturbation can be negative. Specifically, the linearisation of \eqref{e:Henry2000} about nonzero homogeneous steady state is a linear equation in which $f(u)$ is replaced by a linear term, and a corresponding model for multiple species is given by~\cite{Henry2002}
\begin{equation}\label{e:Henry2000-system}
\partial_t \u =\D \fD_{0,t}^{1-\gamma} \partial_x^2 \u + \F(\u),
\end{equation}
where $\u \in\RR ^N$ is the vector of the perturbation densities, $\D$ is the $N\times N$ diagonal matrix of diffusion coefficients with positive diagonal entries, and $\F(\u)$ is a nonlinear vector where $\F :\RR ^N\to\RR ^N$. 

\paragraph{Subdiffusion with linear creation and annihilation} In this case, the addition or removal of particles arises from the reaction during the waiting time, cf., \eqref{e:Henry2006-balance}. The scalar equation is given by~\cite{Henry2006}
\begin{equation}\label{e:Henry2006}
\partial_t u = d e^{at} \fD_{0,t}^{1-\gamma} \left( e^{-at} \partial_x^2 u \right) + a u,
\end{equation}
where $a\in\RR$ is the reaction rate. This model also coincides with the classical one at $\gamma=1$, and it preserves positivity of solutions since the amount of removed particles is less than the amount of existing particles, cf.\ Section~\ref{s:modelling}. The corresponding model for multiple species is given by~\cite{Langlands2008,Nepomnyashchy2016}, with a constant matrix $\A\in\RR^{N\times N}$ as
\begin{equation}\label{e:Langlands2008}
\partial_t \u = \D e^{\A t} \fD_{0,t}^{1-\gamma} \left( e^{-\A t} \partial_x^2 \u \right) + \A \u.
\end{equation}

\medskip
Inspired by the work of Henry and co-authors, we study the linear equations through Fourier-Laplace transform which leads to dispersion relations of the form $D(s,q^2)=0$ that relate the temporal mode through $s\in\CC$ with the spatial mode through $q\in \RR$. By analogy to evolution equations with normal diffusion, one might expect that the set of solutions determines the spectral stability, but the situation for reaction-subdiffusion systems (of the above types) is more subtle. 
Since fractional powers occur in the dispersion relation, one has to choose branch cuts, and the canonical choice of the negative real line has been used in \cite{Henry2002,Henry2005,Nec2007,Nec2008}. As expected, for some cases it has been shown that positive real parts imply exponential instability. Negative real parts, however, do not necessarily imply exponential decay. Indeed, it has been found in \cite{Henry2002} for $\gamma=1/2$ that solutions decay as a power law, and in \cite{Nec2008} for that solutions algebraically decay with an unspecific rational power.

In this paper, we refine and extend these results, as informally summarised next. 

\paragraph{Pseudo-spectrum} We consider non-canonical branch cuts and show that the choice strongly influences the existence of solutions to the dispersion relation that lie to the left of the (rightmost) branch point, which we therefore refer to as \emph{pseudo-spectrum}.

The reason for choosing non-canonical branch cuts is that it allows to locate and track otherwise invisible solutions to the dispersion relation. In particular we identify those solutions that relate to regular spectrum of classical reaction-diffusion equations for $\gamma=1$, and we prove convergence as $\gamma\to1$ (Theorems~\ref{t:ConvergenceModelA}, \ref{t:ConvergenceModelB}). 

\paragraph{Decay and growth} We show that, at least for rational $\gamma\in(0,1)$ and $N\leq 2$, strictly stable (pseudo-)spectrum in \eqref{e:Henry2000-system} implies that Fourier modes of solutions decay with the algebraic power law $t^{\gamma-2}$ (Theorem~\ref{t:ILT-ss}), while in \eqref{e:Langlands2008} it implies exponential decay whose rate, however, typically differs from that of normal diffusion and features an algebraic correction $t^{-\gamma}$ (Theorem \ref{t:ILT-ca}); the Fourier modes of both \eqref{e:Henry2000-system} and \eqref{e:Langlands2008} grow exponentially for unstable (pseudo-)spectrum. In fact, we provide explicit leading order expansions.
We also include a discussion of the scalar subdiffusion equation, in particular algebraic decay and positivity, which we found somewhat scattered in the literature, cf.\ Section~\ref{s:Subdiffusion}.

\medskip
We remark that different algebraic decay for strictly stable spectrum has been obtained in \cite[Theorem 5.1]{Vergara2017} for \eqref{e:Zacher}(2) with homogeneous Dirichlet boundary condition.  

\paragraph{Turing-type instability} Concerning the onset of instability, models \eqref{e:Henry2000-system} and  \eqref{e:Langlands2008} differ significantly from each other and from the case of normal diffusion $\gamma=1$. We focus on the case of two species systems, $N=2$, and when parameters are such that $\gamma=1$ admits a so-called Turing instability for a critical diffusion ratio. As already noticed in \cite{Henry2002,Henry2005,Nec2007,Nec2008}, in case \eqref{e:Henry2000-system}, if the spectrum for $\gamma=1$ is Turing unstable, then the spectrum for all $\gamma\in(0,1)$ is unstable. This means that, in terms of the diffusion ratio, the reaction-subdiffusion in \eqref{e:Henry2000-system} is always \emph{less stable} than normal diffusion. In particular, the threshold of Turing instability in this model is smaller than that for normal diffusion. As noticed in \cite{Nec2007,Nec2008}, considering large wavenumbers $q$ shows that the spectrum becomes unstable via \emph{infinite} wavenumber with oscillatory modes. In particular, there is no finite wavenumber selection at the onset of instability, which is a key feature of the normal Turing instability. Beyond the results in \cite{Nec2007,Nec2008}, we in particular include an analysis of pseudo-spectrum, which reveals the transition to instability. Specifically, we show that for any diffusion ratio less than the Turing threshold for normal diffusion, the pseudo-spectrum is strictly stable for all wavenumbers if $\gamma$ is close to 1 (Theorem~\ref{t:RealPartSpectrumInfinityWavenumber}).

Stable (pseudo-)spectrum of \eqref{e:Langlands2008} and its transition to instability, differs in character from that of \eqref{e:Henry2000-system}. Specifically, the (pseudo-)spectrum of \eqref{e:Langlands2008} is not close to the origin for large wavenumbers, which is similar to the regular spectrum and leads to the mentioned exponential decay, cf.\ Theorem \ref{t:ILT-ca}. We prove that stable (pseudo-)spectrum becomes unstable via finite wavenumber for $\gamma$ close to $1$, and we obtain the Turing instability threshold as a function of the anomalous exponent, and find a critical minimum anomalous exponent $\gamma_{\A}$ (Theorem~\ref{t:CriticalDiffusionRatioModel2}). In particular, and in contrast to \eqref{e:Henry2000-system}, the subdiffusive transport in \eqref{e:Langlands2008} is always \emph{more stable} than normal diffusion, and the Turing instability does not happen for $\gamma<\gamma_{\A}$.

\medskip
We illustrate these results with a number of numerical computations. 

\medskip
These insights may form the basis for further linear analysis, in particular estimates in $x$-space, but also nonlinear analysis in terms of estimates and possibly bifurcations. These are rather non-trivial due to the strongly different character of the linear dynamics and spectral properties compared to normal diffusion.

\medskip
This paper is organised as follows:
In Section~\ref{s:modelling}, we briefly illustrate the derivation of the two different reaction-subdiffusion equations. 
We introduce some background on fractional calculus, subdiffusion and Turing instability in Section~\ref{s:preliminaries}.
In Section~\ref{s:Model1}, we consider \eqref{e:Henry2000-system} and perform a detailed spectral analysis. We prove the local convergence theorem, obtain large wavenumber asymptotics and show the Turing instability results.
In Section~\ref{s:Model2}, we consider \eqref{e:Langlands2008} and give the analogous convergence theorem of spectra and the Turing instability analysis. Some technical proofs are deferred to appendices.

\section*{Acknowledgements}
This work was supported by the China Scholarship Council and the PhD degree completion stipend from University of Bremen. J.Y. is grateful for the hospitality and support from Faculty 3 -- Mathematics, University of Bremen as well as travel support through an Impulse Grants for Research Projects by University of Bremen. 

\section{Reaction-subdiffusion models}\label{s:modelling}

\paragraph{Subdiffusion with extra source and sink} In \cite{Henry2000}, a balance equation for the density of particles $u(x,t)$   at position $x$ and time $t$ is derived from the CTRW as
\begin{equation}\label{e:Henry2000-balance}
	u(x,t) = u(x,0) \Psi(t) + \int_{\RR } \int_0^t u(y,s) \phi(x-y) w(t-s) \dif s \dif y + \int_0^t h(x,s) \Psi(t-s) \dif s,
\end{equation}
where $\phi(x)$ and $w(t)$ are the PDFs of displacement and waiting time, respectively, with $\phi(x)$ the Gaussian $\phi(x)=(4\pi\sigma^2)^{-1/2}\exp\left(-x^2/(4\sigma^2)\right)$, and $w(t)$ a power law distribution with $w(t)\sim \gamma \tau^\gamma/(\Gamma(1-\gamma)t^{1+\gamma})$, $0<\gamma<1$ for $t\gg 1$~\cite[eq.~4.48]{Mendez2014}. The so-called cumulative function $\Psi(t)=\int_t^\infty w(s) \dif s$ represents the probability that particles have no jump during the time interval $(0,t)$. Lastly, the function $h(x,s)$ represents the added or removed particles at position $x$ and time $s<t$, and thus the source or sink. 

A physical interpretation of the summands on the right-hand side of \eqref{e:Henry2000-balance} as follows: The first summand represents the particles that stay at position $x$ up to the time $t$; the second summand represents the particles which moved from some position $y$ and some past time $s<t$ to position $x$ and stay until time $t$; the last summand represents the particles which were added at (removed from) position $x$ and time $s$, and remain at (do not return to) position $x$ until time $t$.

The Fourier-Laplace transform of \eqref{e:Henry2000-balance} gives an algebraic equation in terms of the Fourier transform of $\phi(x)$ and Laplace transform of $w(t)$. Considering the leading order large-scale, long-time limit and using the inverse Fourier-Laplace transform yield the reaction-subdiffusion equation \eqref{e:Henry2000}, where $f(u(x,t))$ replaces $h(x,t)$ as the (nonlinear) reaction term, cf.\ e.g.\ \cite{Metzler2000,Henry2000}. The corresponding model for multiple species \eqref{e:Henry2000-system} is obtained by extending to $N$-components system. 

\paragraph{Subdiffusion with linear creation and annihilation} The idea of the model derived in \cite{Henry2006} is that reactions occur at a constant per capita rate during the waiting time, i.e.\ $\partial_tu(x,t)=au$ which gives $u(x,t)=u(x,t_0)e^{a(t-t_0)}$. The resulting analogue to the above balance equation is 
\begin{equation}\label{e:Henry2006-balance}
	u(x,t) = u(x,0) e^{at} \Psi(t) + \int_{\RR } \int_0^t u(y,s) e^{a(t-s)} \phi(x-y) w(t-s) \dif s \dif y.
\end{equation}
Since all terms are positive for positive initial $u(x,0)$, the density $u$ is always positive; 
the amount of removed particles is always less than the existing ones. Compared to \eqref{e:Henry2000-balance}, the extra source or sink does not appear in the process in \eqref{e:Henry2006-balance}. Instead, the particles are created or annihilated intrinsically with exponential rate during the waiting time. The associated fractional differential equation reads \eqref{e:Henry2006}. In the case of multiple species, \eqref{e:Langlands2008} can be derived from the CTRW, cf.\ \cite{Langlands2008}.

Yet another model for which Turing-type instability has been discussed was proposed in \cite{Nec2007a} and reads
\begin{equation}\label{e:Nec2007a}
	\v (x,t) = \v (x,0) \delta(t) + \int_{\RR } \int_0^t \phi(x-y) w(t-s) e^{\A (t-s)} \v (y,s) \dif s \dif y,
\end{equation}
where the components of $\v (x,t)\in\RR ^N$ represent the number of particles of the corresponding species, which arrive at the position $x$ \emph{exactly} at time $t$, and $\v (x,0)\delta(t)$ represents the input of particles at initial time; here $\delta(t)$ is the Dirac delta distribution. The relation between \eqref{e:Langlands2008} and \eqref{e:Nec2007a} is given by
\begin{equation}\label{e:modelBdifference}
	\u (x,t) = \int_0^t \Psi(t-s) e^{\A (t-s)} \v (x,s) \dif s,
\end{equation}
cf.\ \cite{Langlands2008}, which can lead to different behaviour of the individual solutions for two models.

\section{Preliminaries}\label{s:preliminaries}

\subsection{Fractional calculus}\label{s:fractionalcalculus}

We define the Riemann-Liouville fractional integral and derivative for the subdiffusion range $\gamma\in(0,1)$.

\begin{definition}\label{d:fracintegral}
	Let $f(t) \in \Lspace^1(0,T)$ for any $T>0$. The integral
	\begin{equation}\label{e:FracIntegral}
		(\fD_{0,t}^{-\gamma}f)(t) := \int_0^t \frac{(t-s)^{\gamma-1}}{\Gamma(\gamma)}f(s) \dif s, \quad \gamma\in(0,1),
	\end{equation}
	is called \emph{fractional integral} of the order $\gamma$, where $\Gamma(\gamma)$ is Gamma function.
\end{definition}

This fractional integral is the (Laplace) convolution with kernel $k_{\gamma}(t):=t^{\gamma-1}/\Gamma(\gamma)$ via
\begin{equation}\label{e:Convolution}
	(k_\gamma*f)(t) := \int_0^t k_\gamma(t-s)f(s) \dif s,
\end{equation} 
i.e., $(\fD_{0,t}^{-\gamma}f)(t) = (k_{\gamma}*f)(t)$.
Notably, for $\gamma=1$ we get $(\fD_{0,t}^{-\gamma}f)(t) = \int_0^tf(s)\dif s$. 

\begin{definition}\label{d:RLderivative}
	For $f:[0,T]\to \RR$ the \emph{Riemann-Liouville fractional derivative} of order $1-\gamma$ is (formally) defined as
	\begin{equation}\label{e:RLDerivative}
		(\fD_{0,t}^{1-\gamma}f)(t) := \frac{\dif}{\dif t}(k_\gamma*f)(t) = \frac{\dif}{\dif t} \int_0^t \frac{(t-s)^{\gamma-1}}{\Gamma(\gamma)}f(s) \dif s, \quad \gamma\in(0,1).
	\end{equation}
\end{definition}

Notably, this fractional derivative is nonzero on constants, 
\begin{equation*}
	(\fD_{0,t}^{1-\gamma}1)(t) = k_{\gamma}(t),
\end{equation*}
which tends to zero as $t^{\gamma-1}$ for $t\to\infty$ and is unbounded for $t\to 0$. At $\gamma=1$, $\fD_{0,t}^{1-\gamma}$ is the identity operator, i.e., $(\fD_{0,t}^{1-\gamma}f)(t)=f(t)$, while for $\gamma=0$, $(\fD_{0,t}^{1-\gamma}f)(t)$ formally yields $(\dif/\dif t)\left(\delta*f\right)(t)=f'(t)$ with Dirac delta distribution $\delta(t)$ and $':=\dif/\dif t$.

\medskip
A simple sufficient condition for the existence of the Riemann-Liouville fractional derivative is as follows.
\begin{lemma}[\cite{Samko1993}, Lemma 2.2]\label{l:RLderivativeexist}
	Let $f(t) \in \ACspace([0,T])$, then $(\fD_{0,t}^{1-\gamma} f)(t)$ exists almost everywhere for $\gamma\in(0,1)$. Moreover $(\fD_{0,t}^{1-\gamma}f)(t) \in \Lspace^p(0,T)$, $1\leq p <1/(1-\gamma)$, and
	\begin{align*}
		(\fD_{0,t}^{1-\gamma} f)(t) = (k_\gamma*f')(t) + k_\gamma(t)f(0).
	\end{align*}
\end{lemma}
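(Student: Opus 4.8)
The plan is to start from the defining formula $(\fD_{0,t}^{1-\gamma}f)(t)=\frac{\dif}{\dif t}(k_\gamma*f)(t)$ and use the absolute continuity of $f$ to write $f(s)=f(0)+\int_0^s f'(r)\dif r$ with $f'\in\Lspace^1([0,T])$. Substituting this into the convolution, I would split $(k_\gamma*f)(t) = f(0)(k_\gamma*1)(t) + (k_\gamma*g)(t)$, where $g(s):=\int_0^s f'(r)\dif r$. The first term is explicit: $(k_\gamma*1)(t)=\int_0^t \frac{(t-s)^{\gamma-1}}{\Gamma(\gamma)}\dif s = t^\gamma/\Gamma(\gamma+1)$, whose $t$-derivative is exactly $k_\gamma(t)=t^{\gamma-1}/\Gamma(\gamma)$. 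For the second term, the key step is to interchange the order of integration (Fubini, justified because $f'\in\Lspace^1$ and $k_\gamma\in\Lspace^1_{\mathrm{loc}}$ near $0$), obtaining the semigroup-type identity $(k_\gamma * g)(t) = (k_\gamma*(k_1*f'))(t) = (k_{\gamma+1}*f')(t)$, using $k_\gamma * k_1 = k_{\gamma+1}$, which follows from the Beta-function integral $\int_0^t (t-s)^{\gamma-1}s^0\dif s$ scaled appropriately, i.e.\ $k_\alpha * k_\beta = k_{\alpha+\beta}$.

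Having rewritten $(k_\gamma*f)(t) = \frac{f(0)}{\Gamma(\gamma+1)}t^\gamma + (k_{\gamma+1}*f')(t)$, I would differentiate in $t$. The first summand differentiates to $k_\gamma(t)f(0)$ as noted. For the second, since $k_{\gamma+1}(t)=t^\gamma/\Gamma(\gamma+1)$ is absolutely continuous on $[0,T]$ with $k_{\gamma+1}'=k_\gamma\in\Lspace^1_{\mathrm{loc}}$, the convolution $k_{\gamma+1}*f'$ is differentiable a.e.\ with $\frac{\dif}{\dif t}(k_{\gamma+1}*f')(t) = (k_\gamma * f')(t)$; this is the standard fact that differentiating a convolution $(\varphi * \psi)$ with $\varphi\in\ACspace$, $\varphi(0)=0$, $\psi\in\Lspace^1$ passes the derivative onto $\varphi$. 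Combining the two pieces yields precisely $(\fD_{0,t}^{1-\gamma}f)(t) = (k_\gamma*f')(t) + k_\gamma(t)f(0)$, which is the claimed formula, and it shows the derivative exists a.e.

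Finally, for the integrability claim $(\fD_{0,t}^{1-\gamma}f)\in\Lspace^p([0,T])$ with $1\le p<1/(1-\gamma)$, I would argue that $k_\gamma(t)=t^{\gamma-1}/\Gamma(\gamma)\in\Lspace^p([0,T])$ exactly when $p(\gamma-1)>-1$, i.e.\ $p<1/(1-\gamma)$, so the term $k_\gamma(t)f(0)$ lies in $\Lspace^p$; and the convolution term $k_\gamma * f'$ lies in $\Lspace^p$ by Young's inequality for convolutions, $\|k_\gamma*f'\|_{\Lspace^p}\le \|k_\gamma\|_{\Lspace^p}\|f'\|_{\Lspace^1}$, since $f'\in\Lspace^1([0,T])$. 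The main obstacle — really the only non-routine point — is rigorously justifying the interchange of differentiation and convolution (equivalently, the Fubini step that produces the semigroup identity $k_\gamma*k_1=k_{\gamma+1}$), because $k_\gamma$ is singular at the origin; once that is handled via the local integrability of $k_\gamma$ and dominated convergence on the difference quotients, the rest is bookkeeping with the Gamma and Beta functions. Since this is a classical result quoted from \cite{Samko1993}, I would in the paper simply cite it rather than reproduce the argument, but the sketch above is the route one would take.
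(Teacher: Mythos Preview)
Your proof sketch is correct and is essentially the standard argument found in the cited reference \cite{Samko1993}; the paper itself does not supply a proof for this lemma but merely quotes it as a known result. Your own closing remark already anticipates this: in the paper the lemma is stated with a citation and no argument, so there is nothing to compare beyond noting that your route (split $f=f(0)+\int_0^\cdot f'$, use the semigroup identity $k_\gamma*k_1=k_{\gamma+1}$, differentiate, then apply Young's inequality for the $\Lspace^p$ bound) is exactly the textbook derivation.
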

Here $\ACspace([0,T])$ is the space of functions $f$ which are absolutely continuous on $[0,T]$, i.e., $f(t) = c + \int_0^t g(s) \dif s$ for some $g \in \Lspace^1(0,T)$ and constant $c$. 

\medskip
The Laplace transform, denoted by $\fL$, will be used below and has the following property, see also \cite[Section 2.8]{Podlubny1998}, \cite[Section 2.2]{Kilbas2006}. 
\begin{lemma}\label{l:Laplace}
	Let $f:[0,\infty)\to\RR$ be exponentially bounded, i.e., $|f(t)| \leq A e^{ct}$ for some $A>0$ and $c\in\RR$. For $\Re(s)>\max\{0,c\}$ and $\gamma\in(0,1)$ the following hold.
	\begin{itemize}
	\item[(1)] If $f(t)\in \Lspace^1(0,T)$ for any $T>0$, then 
	$(\fL \fD_{0,t}^{-\gamma}f)(s) = s^{-\gamma}\left(\fL f\right)(s)$.
	
	\item[(2)] If $f(t)\in \ACspace([0,T])$ for any $T>0$, then $(\fD_{0,t}^{-\gamma} f)(0)=0$ and
	\begin{equation}\label{e:LaplaceTransformRLderivative}
	(\fL \fD_{0,t}^{1-\gamma}f)(s) = s^{1-\gamma}\left(\fL f\right)(s).
	\end{equation}
	\end{itemize}
\end{lemma}
\begin{proof}
This directly follows from \cite[Lemma 2.14 and Remark 2.8]{Kilbas2006}, where
 $\Re(s)>\max\{0,c\}$ stems from the calculation of $(\fL\fD_{0,t}^{-\gamma}f)(s)$ and guarantees convergence of the Laplace transform integral. 
In (2) we also used that $f(t)\in\ACspace([0,T])$ is bounded so that  $(\fD_{0,t}^{-\gamma}f)(0)=0$.
\end{proof}

\subsection{Subdiffusion equation}\label{s:Subdiffusion}

As mentioned, the analogue of the heat equation is the basic time-fractional diffusion equation is
\begin{equation}\label{e:Subdiffusion}
\partial_t u = d \fD_{0,t}^{1-\gamma} \partial_x^2 u, \quad x\in\RR,
\end{equation}
with diffusion coefficient $d>0$, which we refer to as the subdiffusion equation. Compared to the heat equation, it is more subtle to see that solutions \eqref{e:Subdiffusion} remain positive and to determine the decay properties. We found this scattered in the literature and next give a brief account of these results.

\medskip
For the Fourier-Laplace transform, the right-hand side of \eqref{e:Subdiffusion} defines the linear operator $\cL_{\sub} := \fD_{0,t}^{1-\gamma}\partial_x^2$ with $\cL_{\sub}: \ACspace([0,T];\Hspace^2(\RR))\to\Lspace^1(0,T;\Lspace^2(\RR))$. 

The Green's function of \eqref{e:Subdiffusion} has the Fourier transform~\cite[eq.~49]{Metzler2000}
\begin{align}\label{e:FourierSubSol}
	\widehat\Phi(q,t) = E_\gamma(-dq^2 t^\gamma),
\end{align}
where $E_\gamma(z) = \sum_{n=0}^\infty z^n/\Gamma(1+n\gamma)$ is the Mittag-Leffler function and $q$ is the wavenumber. The function \eqref{e:FourierSubSol} possesses the asymptotic behaviour~\cite[eq.~20]{Metzler2004}
\begin{align}\label{e:AsympML}
E_\gamma(-dq^2 t^{\gamma}) \sim 
\begin{cases}
\exp(-\frac{dq^2 t^\gamma}{\Gamma(1+\gamma)}), & t\ll (dq^2)^{1/\gamma}\\
(dq^2 t^\gamma \Gamma(1-\gamma))^{-1}, & t\gg (dq^2)^{1/\gamma}
\end{cases}
\end{align}
which shows the effect of memory for nonzero wavenumber: short time exponential decay and long time algebraic decay, here with power $-\gamma$. Note the above separation of decay depends on the wavenumber. In Fourier-Laplace space, the Green's function of \eqref{e:Subdiffusion} is given by
\begin{align}\label{e:GreenFL}
	(\fL\widehat{\Phi})(q,s) = (s + dq^2 s^{1-\gamma})^{-1},\quad \Re(s)>0,
\end{align}
and for the inverse Laplace transform the poles of \eqref{e:GreenFL} with nonzero $s$ contribute to the exponential growth/decay, while the trivial pole creates the algebraic decay, cf.\ Theorem \ref{t:ILT-ca} below. 

\medskip
We observe that the solutions to \eqref{e:Subdiffusion} have a self-similar scaling property, i.e.\ if $u(x,t)$ solves \eqref{e:Subdiffusion} then so does $u(\varepsilon x, \varepsilon^{2/\gamma} t)$ for $\varepsilon\in \RR$. The similarity variable $x/t^{\gamma/2}$ leads to a series expansion of the Green's function~\cite[eq.~46]{Metzler2000}\cite[eq.~4.23]{Mainardi2001} given by
\begin{equation}\label{e:GreensFunction}
\Phi(x,t) = \frac{1}{\sqrt{4d t^\gamma}}\sum_{n=0}^{\infty}\frac{(-1)^n}{n!\Gamma\left(1-\frac{\gamma}{2}-\frac{\gamma}{2}n\right)}\left(\frac{|x|}{\sqrt{d t^\gamma}}\right)^n, \quad t>0,
\end{equation}
In \cite{Mainardi2001} this is related to the Wright function, which turns out to be positive and algebraically decaying locally uniformly in $x$ for $t\gg 1$ with power $-\gamma/2$, and with power $-\gamma/(4-2\gamma)\in(-\gamma/2,0)$ for $|x|\gg\sqrt{dt^\gamma}$. We give a few details in Appendix \ref{s:WrightFunction}.

\medskip
As an aside we remark that the initial-boundary value problem \eqref{e:Subdiffusion} with homogeneous Dirichlet boundary condition has been studied in \cite{MMAK2019}. Here solutions decay pointwise algebraically with power $-\gamma$ for $t\gg1$, i.e., faster than on the unbounded domain.

\subsection{Turing instability}\label{s:Turinginstability}

Consider a classical reaction diffusion system given by
\begin{equation}\label{e:RegAISystem}
	\partial_t \u = \D \partial_x^2 \u + \F(\u), \quad x\in\RR ,
\end{equation}
where $\u \in\RR ^2$ is the vector of density of the species, $\F(\u)$ represents the reaction kinetics where $\F :\RR ^2\to\RR ^2$ and $\D =\diag(1,d)$ is the diagonal matrix of positive constant diffusion coefficients. Suppose that \eqref{e:RegAISystem} possesses a homogeneous steady state $\u _*=(u_{1*},u_{2*})^{\mathrm{T}}$, i.e., $\F (\u _*)=0$. The linearisation of \eqref{e:RegAISystem} about $\u _*$ is given by
\begin{equation}\label{e:RegLinearSystem}
	\partial_t \u = \D \partial_x^2 \u +\A \u , \quad \A = \left(\partial_\u \F(\u)\right)_{\u =\mathbf{u_*}}=:
	\begin{pmatrix}
	a_{1}	&	a_{2}\\
	a_{3}	&	a_{4}
	\end{pmatrix}.
\end{equation}
In an activator-inhibitor system $a_{1}>0$ and $a_{4}<0$. A Turing or diffusion-driven instability occurs if the homogeneous steady state of \eqref{e:RegAISystem} is strictly linearly stable in the absence of diffusion, but is linearly unstable in the presence of diffusion. Being a 2-by-2 matrix, strictly linear stability without diffusion means $\tr(\A ) = a_{1} + a_{4} <0$ and $\det(\A ) = a_{1}a_{4} - a_{2}a_{3} >0$.

The right-hand side of \eqref{e:RegLinearSystem} defines the linear operator $\cL :=\D \partial_x^2 +\A $, whose eigenvalue problem reads
\begin{equation*}
	\cL \u  = \D \partial_x^2 \u +\A \u  = s\u ,
\end{equation*}
where $s$ is the temporal eigenvalue. In Fourier space, the eigenvalue problem becomes
\begin{equation*}
	\cL \hat{\u} = -q^2\D \hat{\u} + \A \hat{\u} = s\hat{\u},
\end{equation*}
where $q$ is the wavenumber, and yields the dispersion relation
\begin{equation}\label{e:RegDR}
	D_{\reg}(s,q^2) := \det\left(s\I + q^2 \D - \A \right) = \left(s + q^2 - a_{1}\right)\left(s + dq^2 - a_{4}\right) -a_{2} a_{3} = 0.
\end{equation}
The solutions set $\Lambda_\reg := \{ s \in \CC : D_{\reg}(s,q^2) = 0\ \text{for a}\ q \in \RR \}$ is the $\Lspace^2$-\emph{spectrum} of $\cL$ with domain $(\Hspace^2(\RR))^2$, e.g., \cite{Sandstede2002}. In order to  distinguish this from spectra in the subdiffusion cases, we refer to it as the \emph{regular spectrum}. 

Concerning \eqref{e:RegAISystem}, the homogeneous steady state $\u _*$ is called strictly spectrally stable (unstable) if $\max(\Re(\Lambda_\reg))<0$ ($>0$). It is then also linearly and nonlinearly stable (unstable) for \eqref{e:RegLinearSystem} and \eqref{e:RegAISystem}, respectively \cite{Sandstede2002}. Furthermore, it is well known \cite[eq.~2.27]{Murray2003} that, if also $a_2a_3< 0$, then there exists a critical diffusion coefficient (also called Turing bifurcation point or Turing threshold) $d_c$ for which (i) $\sgn(\max(\Re(\Lambda_\reg))) = \sgn(d-d_c)$ and (ii) in case $d=d_c$ we have $D_{\reg}(s_c(q),q^2)=0$ for $q^2\approx q_c^2$ with real $s_c\approx -s_0 (q-q_c)^2$, $s_0>0$.

\section{Subdiffusion with source and sink}\label{s:Model1}

As the first reaction-subdiffusion model, we consider \eqref{e:Henry2000-system} with two components,

\begin{equation}\label{e:ModelA}
	\partial_t \u = \D \fD_{0,t}^{1-\gamma} \partial_x^2 \u + \F(\u),\quad \u\in\RR^2
\end{equation}
We will study linear stability properties of a homogeneous steady state $\u _*$ where $\F(\u_*)=0$. 

The (formal) linearisation of \eqref{e:ModelA} in $\u_*$ reads
\begin{equation}\label{e:FracLinearSystem}
	\partial_t \u = \D \fD_{0,t}^{1-\gamma} \partial_x^2 \u + \A \u,
\end{equation} its Fourier-transform with respect to $x\in\RR$ is
\begin{align}\label{e:ModelAFourier}
	\partial_t \hat\u = -q^2\D \fD_{0,t}^{1-\gamma} \hat\u + \A \hat\u,
\end{align}
and the Fourier-Laplace transform reads 
\begin{equation*}
	\left(s\I + s^{1-\gamma}q^2 \D - \A \right) \fL\hat\u = \hat{\u}_0, \quad \Re(s)>0, \ q \in \RR,
\end{equation*}
Analogous to \eqref{e:RegDR} we obtain the dispersion relation
\begin{align*}
	D_\ss(s,q^2) & := \det\left(s\I +s^{1-\gamma}q^2\D -\A \right)\nonumber\\
	& = \left(s+s^{1-\gamma} q^2- a_{1}\right)\left(s+s^{1-\gamma} dq^2- a_{4}\right)- a_{2}a_{3}=0, \quad s \in \Omega_0^+,
\end{align*}
where $\Omega_0^+ := \{s\in \CC : \arg(s)\in (-\pi/2,\pi/2) \}$. 

\paragraph{Branch and branch cut} Since $s^{1-\gamma}$ is a multivalued function in $\CC$, we need to choose a branch which preserves positive reals. For given $q\in\RR$ we choose $\theta_1(q)\in(0,\pi/2)$ such that on the branch cut 
\[
\BC_{0}^{\theta_1(q)} := \{ s\in\CC : \Im(s)/\Re(s) = \tan(\theta_1(q)),\, \Re(s)< 0\}
\]
there is no solution of the dispersion relation, i.e., $D_\ss(s,q^2) \neq 0$ for  $s\in\BC_{0}^{\theta_1(q)}$.

Since Theorems \ref{t:ILT-ss} for \eqref{e:ModelAFourier} and Theorem \ref{t:ILT-ca} for \eqref{e:Langlands2008} give decay and growth behaviour essentially independent of $\theta_1$, for simplicity we suppress the dependence of $\theta_1$ on $q$. The corresponding principal branch is defined by
	\[
	\Omega_0:=\{s \in \CC\setminus\{0\} : \arg (s) \in (-\pi+\theta_1,\pi+\theta_1),\,\theta_1 \in (0,\pi/2)\}.
	\]
Setting $s^\delta = z$, where $\delta:= 1-\gamma$, we obtain $z \in \Sigma_0:=\{z \in \CC\setminus\{0\} : \arg(z)\in ((-\pi+\theta_1) \delta, (\pi+\theta_1) \delta)\}$ if and only if $s\in\Omega_0$. Since $\Re(s)>0$, we restrict our dispersion relation to $s \in \Omega_0^+$ and $z \in \Sigma_0^+:=\{z \in \Sigma_0 : \arg(z)\in (-\pi \delta/2, \pi \delta/2)\}$. 

\begin{remark}\label{r:ExtendReason}
	In the computation of the ILT, we consider the integral along a vertical line in $\Omega_0^+$. It is natural to take the Bromwich contour (Fig.~\ref{f:ILTModelA}) and combine it with the residue theorem in order to calculate the ILT. However, this does not only depend on the roots of the dispersion relation in $\Omega_0^+$, but also in $\Omega_0^{0-} := \Omega_0 \setminus \Omega_0^+$.	Hence, in order to study the temporal behaviour of $\hat{\u}$, we extend the domain of $D_\ss$ and consider
	\begin{equation}\label{e:ModelADRExtend}
		D_\ss(s,q^2) = \left(s+s^{1-\gamma} q^2- a_{1}\right)\left(s+s^{1-\gamma} dq^2- a_{4}\right)- a_{2}a_{3}=0, \quad s \in \Omega_0.
	\end{equation}
\end{remark}

\begin{definition}
	We call the set of roots $\Lambda_\ss^+ := \{s\in\Omega_0^+: D_\ss(s,q^2)=0\ \text{for a}\ q\in\RR \}$ \emph{(subdiffusion) spectrum} of the linear operator $\cL_\ss :=\D \fD_{0,t}^{1-\gamma}\partial_x^2 +\A $,
	and the set of roots $\Lambda_\ss^{0-} := \{s\in\Omega_0^{0-}: D_\ss(s,q^2)=0\ \text{for a}\ q\in\RR \}$ \emph{(subdiffusion) pseudo-spectrum} of $\cL_\ss$.
\end{definition}

We denote $\Lambda_\ss := \Lambda_\ss^{0-} \cup \Lambda_\ss^+$, $\Omega_0^- := \Omega_0^{0-} \setminus i\RR$. As usual for spectral stability, we say the (pseudo-)spectrum of $\cL_\ss$ is (strictly) \emph{stable} (\emph{unstable}) if $\sup(\Re(\Lambda_\ss)) < 0$ ($>0$).

\begin{theorem}\label{t:ILT-ss}
	Let $\gamma\in(0,1)\cap\mathbb{Q}$ and $\lambda:=\sup(\Re(\Lambda_\ss))$. Let $\hat\u(q,t)$ be the solution to \eqref{e:ModelAFourier} with nonzero initial data $\hat\u_0$.
	\begin{itemize}
		\item[(1)] If $\lambda \geq 0$ and $S^+:=\{(s,q)\in \overline{\Omega_0^+}\times \RR : D_\ss(s,q^2)=0\ \text{and}\ \Re(s) \mbox{ maximal }\}\neq\emptyset$, then for any $(s_0,q_0)\in S^+$ we have $\hat{\u}(q_0,t) = C_{\exp} t^{k-1}e^{s_0 t} + o(t^{k-1}e^{\Re(s_0)t})$ with a nonzero $C_{\exp}\in \CC^2$ for almost all initial data and $k$ the multiplicity of $s_0$ as the root of $D_\ss(s,q_0^2)=0$.
		\item[(2)] If (i) $\lambda=0$ and $S^+=\emptyset$ or (ii) $\lambda < 0$ or (iii) $\Lambda_\ss=\emptyset$, then for any $q\in \RR\setminus\{0\}$ there exists a nonzero $C_{\alg}\in\CC^2$ such that $\hat{\u}(q,t) = C_{\alg} t^{\gamma-2} + o(t^{\gamma-2})$. Specifically, 
\begin{align*}
C_{\alg} := -\frac{q^2 \sin(\pi(1-\gamma)) \Gamma(2-\gamma)}{(a_1 a_4-a_2 a_3)^2\pi}
\begin{pmatrix}
-a_4^2 - a_2 a_3 d & a_2 a_4 + a_1 a_2 d\\
a_3 a_4 + a_1 a_3 d & -a_1^2 d - a_2 a_3
\end{pmatrix}
\hat{\u}_0.
\end{align*}
	\end{itemize}
\end{theorem}

\medskip
We defer the technical proof to Appendix \ref{s:InverseLaplaceZeroBranchPoint}.
Concerning case (1) in the theorem we remark that if $\lambda>0$, then the set $S^+$ is guaranteed to be non-empty.
Regarding the case $q=0$, note that \eqref{e:ModelAFourier} then reduces to $\partial_t \hat\u =  \A \hat\u$ whose solutions decay exponentially due to the assumption on the Turing instability.

\begin{remark} 
The determinant of the coefficient matrix in $C_{\alg}$ given by $(a_1a_4-a_2a_3)^2d$ is non-zero by assumption so that $C_{\alg}\neq0$ for non-trivial initial data $\hat\u_0$. Notably, the entries of $C_{\alg}$ grow as $q^2$, so the initial data $\u_0(x)$ should lie in $\Hspace^2$ in order to obtain solutions in $\Lspace^2$. This growth also corroborates the lack of smoothening of this model,  consistent with the observation in \cite{Henry2005}.
A more detailed decomposition into exponential terms and algebraically decaying terms, including formulae for $C_{\exp}, C_{\alg}$ is given in \eqref{e:ILTFormulaSimplePole} (\eqref{e:multicoeff-ss} in case of multiple roots). 
\end{remark}

In particular, Theorem \ref{t:ILT-ss} reveals the roots of \eqref{e:ModelADRExtend} determine the temporal behaviour of $\hat{\u}$ as exponentially growing with algebraic factor for unstable spectrum, and algebraically decaying for strictly stable pseudo-spectrum. 
Therefore, the main work of the present section is to analyse the (pseudo-)spectrum.

\begin{remark}\label{r:denominator}
Our approach does not apply to irrational $\gamma$ for which the decay remains an open problem to our knowledge.
We assume rational $\gamma$ as a technical simplification, but our result indicates a uniform behaviour with respect to $\gamma$ on the Fourier modes. For rational $\gamma$ the solution in Fourier-Laplace space possesses finitely many singularities and it is possible to explicitly compute the leading order of the inverse Laplace transform via residues. 
For irrational $\gamma$, an accumulation of solutions to the dispersion relation on certain curves occurs, which is an obstacle for a branch cut and contours that avoid singularities. 
\end{remark}

\medskip
Here we do not transfer the decay to physical $x$-space and related function spaces, since the dependence of the constants in the estimate on the wavenumber are convoluted. 

\begin{remark}
The ansatz $\u(x,t) = \v(t)e^{iqx}$ is a specific case of the Fourier transform and substitution into \eqref{e:FracLinearSystem} also gives the fractional ODE \eqref{e:ModelAFourier}. Hence, the initial condition in Theorem \ref{t:ILT-ss} can be seen as $\hat{\u}(q,0) = \hat{\v}_0e^{iqx}$, where $\hat{\v}_0\in\RR^2$. Such an ansatz can find spatially periodic solutions, and can be applied equally to the model \eqref{e:ModelB}. We do not discuss this further here.
\end{remark}

\subsection{Scalar case}\label{s:ModelA-scalar}

In order to illustrate further the fundamental difference between $\gamma=1$ (normal diffusion) and $\gamma\neq 1$ (subdiffusion), we consider the scalar case of \eqref{e:FracLinearSystem},
\begin{equation}\label{e:ModelA-scalar}
	\partial_t u = d \fD_{0,t}^{1-\gamma} \partial_x^2 u + a u, \quad u \in\RR ,\ a\in\RR .
\end{equation}
The dispersion relation is given by
\begin{equation}\label{e:ModelA-ScalarDR}
	d_{\ss}(s,q^2) = s + dq^2 s^{\delta} - a = 0, \quad s \in \Omega_0,\ q\in\RR,
\end{equation}
where $\delta=1-\gamma$. Clearly, $s=a$ is the unique solution of \eqref{e:ModelA-ScalarDR} for $q=0$. 

\begin{remark}\label{r:meaningless}
The branch point $s = 0$ is a solution to $d_{\ss} = 0$ if and only if $a = 0$, i.e., the case of the subdiffusion equation, cf.\ Section~\ref{s:Subdiffusion}. Theorem~\ref{t:ILT-ss} does not apply and the long term decay is $t^{-\gamma}$, which is as predicted by the more general Theorem \ref{t:ILT-ca} below.
\end{remark}

In the following, we give some characteristics of the (pseudo-)spectrum of \eqref{e:ModelA-scalar}.

\begin{lemma}\label{l:Scalarq0}
For any $\delta\in(0,1)$ a unique smooth curve of solutions $s=s(q)$ to \eqref{e:ModelA-ScalarDR} crosses $s=a$ at $q=0$. For $0<|q|\ll1$ we have $s(q)<a$ if either $a>0$ or $a<0$ as well as $\delta\in(0,1/2)$, while $s(q)>a$ if $a<0$ and $\delta\in(1/2,1)$.
\end{lemma}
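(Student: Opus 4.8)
\textbf{Proof proposal for Lemma \ref{l:Scalarq0}.}

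The plan is to treat $q$ (or rather $q^2$) as a small parameter and apply the implicit function theorem to the dispersion relation $d_\ss(s,q^2) = s + dq^2 s^\delta - a = 0$ near the point $(s,q) = (a,0)$. At $q=0$ the relation reduces to $s - a = 0$, so $s=a$ is the unique root, and $\partial_s d_\ss(a,0) = 1 + \delta d q^2 a^{\delta-1}|_{q=0} = 1 \neq 0$, which gives a locally unique smooth branch $s = s(q^2)$ (hence a smooth curve in $q$, even in $q^2$) through $(a,0)$; one should note $a \neq 0$ here since $a=0$ is excluded by Remark \ref{r:meaningless}, and for $a<0$ one must also check the branch stays on the chosen sheet $\Omega_0$, which holds for $|q|$ small since $s(q)$ is close to $a$ and $a$ lies on the chosen branch. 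Differentiating $d_\ss(s(q^2),q^2)=0$ with respect to $\mu := q^2$ at $\mu=0$ yields $s'(0)\cdot 1 + d\, a^\delta = 0$, so $s'(0) = -d\, a^\delta$, and therefore $s(q) = a - d\, a^\delta q^2 + o(q^2)$ for small $|q|$.

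The sign of $s(q) - a$ for $0 < |q| \ll 1$ is then governed by the sign of $-d\, a^\delta$, i.e.\ by the sign of $a^\delta = a^{1-\gamma}$ as a real quantity; since $d>0$, we have $s(q) < a$ precisely when $a^\delta > 0$ and $s(q) > a$ when $a^\delta < 0$. When $a>0$, the real power $a^\delta$ is positive for every $\delta \in (0,1)$, giving $s(q) < a$. When $a<0$, one must fix a real interpretation of $a^\delta = (-|a|)^\delta$; the natural choice consistent with the branch $\Omega_0$ (whose argument interval is centered so as to contain the negative real axis, with $\arg s$ allowed up to $\pi$ from above) is $a^\delta = |a|^\delta e^{i\pi\delta}$, whose real part is $|a|^\delta\cos(\pi\delta)$. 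This is positive for $\delta \in (0,1/2)$ and negative for $\delta \in (1/2,1)$, which yields $s(q) < a$ in the former case and $s(q) > a$ in the latter; this matches the claimed dichotomy. (One should also observe that for $a<0$ the branch value $s(q)$ genuinely has nonzero imaginary part $-d|a|^\delta\sin(\pi\delta)q^2 + o(q^2)$, so the statement $s(q) \lessgtr a$ refers to the real parts, or equivalently to $\Re(s(q)) - a$; this is the relevant quantity for spectral stability and should be stated as such.)

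The main obstacle is essentially bookkeeping rather than analysis: one has to be careful about which real branch of $s^\delta$ is being used at $s=a<0$, and to reconcile the lemma's phrasing ``$s(q) < a$'' versus ``$s(q) > a$'' with the fact that the root is genuinely complex when $a<0$ — I expect the intended reading is the comparison of real parts, and the $\delta = 1/2$ borderline (where $\cos(\pi\delta)=0$, so one must go to the next order $o(q^2)$ term to decide the sign, explaining why $\delta=1/2$ is excluded from both cases). A secondary point to verify is that the branch $s(q^2)$ produced by the implicit function theorem indeed lies in $\Omega_0$ for small $|q|$: this follows from continuity since $a \in \Omega_0$ and $\Omega_0$ is open, so for $|q|$ small enough $s(q) \in \Omega_0$ as well. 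Everything else is the one-line implicit-function-theorem computation above.
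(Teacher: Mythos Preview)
Your proposal is correct and follows essentially the same approach as the paper: both apply the implicit function theorem at $(s,q)=(a,0)$, compute $s'(0)=-d\,a^\delta$ from implicit differentiation, and then read off the sign of $\Re(a^\delta)$ (positive for $a>0$; $\sgn(\cos(\pi\delta))$ for $a<0$). Your added remarks---that for $a<0$ the branch is genuinely complex so the inequality must be read as a comparison of real parts, and that continuity keeps $s(q)\in\Omega_0$ for small $|q|$---are useful clarifications the paper leaves implicit.
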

\begin{proof}
Implicit differentiation of the left-hand side with respect to $q^2$ and $s$ at $s=a, q=0$ gives $da^\delta$ and $1$, respectively. The statement follows from the implicit function theorem and $a^\delta>0$ for $a>0$, and $\sgn(\Re(a^\delta))=\sgn(1/2-\delta)$ for $a<0$ with $\delta\in(0,1)$.
\end{proof}

\medskip
Lemma \ref{l:Scalarq0} shows the concavity of (pseudo-)spectrum for $|q|\ll 1$, and Fig.~\ref{f:Curvature} illustrate it numerically. However, the concavity changes for larger $|q|$, cf.\ Fig.~\ref{f:CurNegative}. In the following, we give the analysis and numerical computation of  (pseudo-)spectrum.
\begin{figure}[t]
\centering
\subfloat[$\delta = 2/5 < 1/2$]{\includegraphics[width=0.3\linewidth]{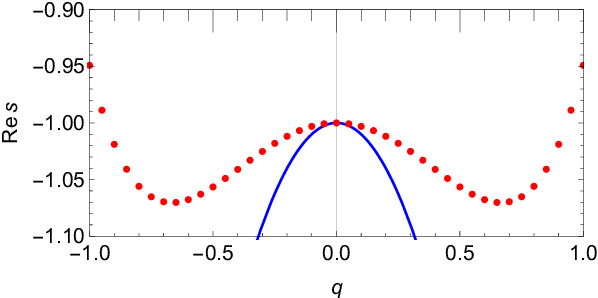}\label{f:CurNegative}}
\hfil
\subfloat[$\delta = 3/5 > 1/2$]{\includegraphics[width=0.3\linewidth]{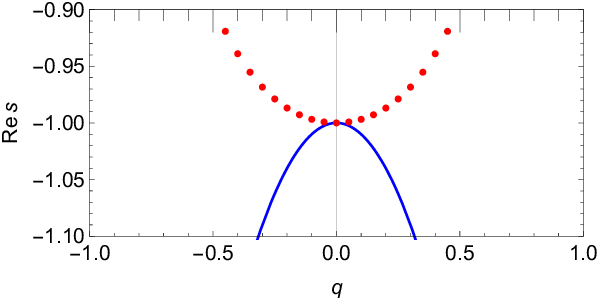}\label{f:CurPositive}}
\caption{Comparison of numerically computed regular spectra (blue solid, $\delta=0$) and pseudo-spectra (red dotted) of \eqref{e:FracLinearSystem} for $a<0$, $\theta_1=\pi/2$.}
\label{f:Curvature}
\end{figure}

\begin{lemma}\label{l:ScalarPositiveSol}
	If $a>0$, then for each $q\in\RR$ the solution $s=s(q)$ to \eqref{e:ModelA-ScalarDR} in $\Omega_0^+$ is unique, positive and satisfies $\lim_{|q|\to\infty} s(q) = 0$. 
\end{lemma}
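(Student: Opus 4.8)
The plan is to analyze the dispersion relation $d_\ss(s,q^2) = s + dq^2 s^\delta - a = 0$ with $a>0$ directly on the ray $s>0$, and then argue that no other root exists in the sector $\Omega_0^+$. First I would restrict to real positive $s$ and set $g(s) := s + dq^2 s^\delta - a$. Since $d>0$, $q^2\ge 0$, and $\delta\in(0,1)$, the function $g$ is continuous and strictly increasing on $(0,\infty)$ (its derivative $1 + dq^2\delta s^{\delta-1}$ is strictly positive), with $g(0^+) = -a < 0$ and $g(s)\to+\infty$ as $s\to\infty$. Hence there is a unique $s(q)>0$ solving $g(s)=0$; monotonicity gives uniqueness among positive reals immediately. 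For $q=0$ this recovers $s=a$.

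Next I would establish the limit $\lim_{|q|\to\infty}s(q)=0$. From $s(q) + dq^2 s(q)^\delta = a$ and positivity of both summands, we get $dq^2 s(q)^\delta \le a$, so $s(q)^\delta \le a/(dq^2)$, i.e.\ $s(q)\le (a/(dq^2))^{1/\delta}\to 0$ as $|q|\to\infty$. (One also sees $s(q)$ is monotonically decreasing in $q^2$ by implicit differentiation, consistent with Lemma \ref{l:Scalarq0}.)

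The remaining, and genuinely substantive, step is to show that for each fixed $q$ the positive real root $s(q)$ is the \emph{only} root of $d_\ss(\cdot,q^2)$ in the right half-sector $\Omega_0^+ = \{s : \arg(s)\in(-\pi/2,\pi/2)\}$. Here I would argue that any root must in fact be real and positive. Suppose $s = re^{i\varphi}\in\Omega_0^+$ with $r>0$ and $\varphi\in(-\pi/2,\pi/2)$ is a root; on the principal-type branch chosen, $s^\delta = r^\delta e^{i\delta\varphi}$ with $\delta\varphi\in(-\pi\delta/2,\pi\delta/2)$. Writing out $d_\ss = 0$ and separating real and imaginary parts gives
\begin{align*}
  r\cos\varphi + dq^2 r^\delta\cos(\delta\varphi) &= a,\\
  r\sin\varphi + dq^2 r^\delta\sin(\delta\varphi) &= 0.
\end{align*}
In the second equation, since $\varphi$ and $\delta\varphi$ lie in $(-\pi/2,\pi/2)$ and have the same sign, $\sin\varphi$ and $\sin(\delta\varphi)$ have the same sign as $\varphi$; with $r>0$, $dq^2\ge 0$, $r^\delta>0$, the left-hand side has the strict sign of $\varphi$ unless $\varphi=0$. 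Hence $\varphi = 0$, i.e.\ $s=r>0$, and uniqueness follows from the monotonicity established above. The main obstacle is handling the $q=0$ edge case in this sign argument (where the second summand vanishes and the constraint degenerates), but there $d_\ss = s-a$ has the single root $s=a>0$, so the conclusion holds trivially; I would state that case separately.
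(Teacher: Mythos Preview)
Your proof is correct. The core step---separating real and imaginary parts to force any root in $\Omega_0^+$ onto the positive real axis---is exactly the argument the paper uses. Where you differ is in the two auxiliary steps: for uniqueness among positive reals you use the direct monotonicity of $g(s)=s+dq^2 s^\delta-a$, whereas the paper invokes the implicit function theorem at $q=0$ and continues the root in $q$ using that $\partial_s d_\ss = 1+dq^2\delta s^{\delta-1}\neq 0$; for the limit $s(q)\to 0$ you read off the bound $s(q)\le (a/(dq^2))^{1/\delta}$ directly from the equation, whereas the paper rescales via $q^2=\kappa^2/s^\delta$ to reduce to the regular dispersion relation $s+d\kappa^2-a=0$ and tracks the parabola. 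Your arguments are more elementary and self-contained; the paper's rescaling, on the other hand, is a special case of the device used later (Lemma~\ref{l:realunstablespec}) to relate subdiffusion spectrum to regular spectrum, so it foreshadows that structure.
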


\begin{proof}
	First, we show that the solution of \eqref{e:ModelA-ScalarDR} must be positive in $\Omega_0^+$. Set $s = r e^{i\theta}$, $r>0$ and $\theta\in(-\pi/2,\pi/2)$, then we can rewrite \eqref{e:ModelA-ScalarDR} as $r e^{i\theta} + d q^2 r^\delta e^{i\delta\theta} = a$. Since $a\in \RR$, the imaginary part of the left-hand side must be zero, i.e., $r \sin(\theta) + d q^2 r^\delta \sin(\delta\theta) = 0$, which is equivalent to $\theta = 0$ since $\delta\in(0,1)$. Hence we have $s>0$.
	
	\smallskip
	Clearly, $s=a$ is the unique solution for $q=0$. Implicit differentiation of the left-hand side of \eqref{e:ModelA-ScalarDR} with respect to $s$ gives $1+dq^2\delta s^{\delta-1}$ which is continuous and nonzero for any $q\in\RR$ and $s>0$. Therefore, the uniqueness for $|q|\ll 1$ can be extended to all $q\in\RR$.
	
	\smallskip
	Since $s>0$ we can rescale $q^2 = \kappa^2/s^\delta$ with $\kappa\in\RR$, cf.\ \cite[Section 4.1]{Nec2007}, which gives
	\begin{equation}\label{e:ModelA-ScalarDR-Rescale}
	s + d \kappa^2 -a =0, \quad s>0,\ \kappa\in\RR.
	\end{equation}
	As $a>0$, the solution of \eqref{e:ModelA-ScalarDR-Rescale} is a parabola in the $(\kappa,s)$-plane and $s\to 0$ for $\kappa^2 \to a/d$. Hence, according to the scaling $q^2\to \infty$ as $\kappa^2 \to a/d$. In contrast, $s\to 0$ for $q^2 \to \infty$ in $(q,s)$-plane. See the right column in Fig.~\ref{f:ModelA-Scalar}. 
\end{proof}
\begin{figure}[t]
\centering
\subfloat[$\delta = 3/4 \in (2/3,1)$, $a<0$]{\includegraphics[width=0.3\linewidth]{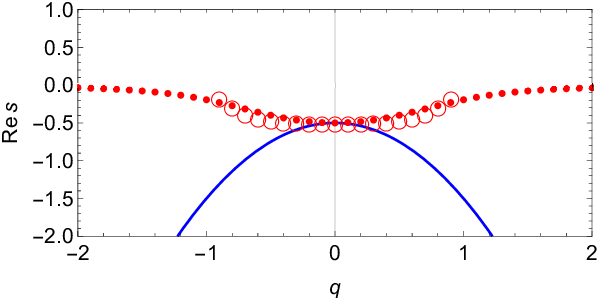}\label{f:ModelA-Scalar-Negative-largedelta}}
\hfil
\subfloat[$\delta = 3/4 \in (2/3,1)$, $a=0$]{\includegraphics[width=0.3\linewidth]{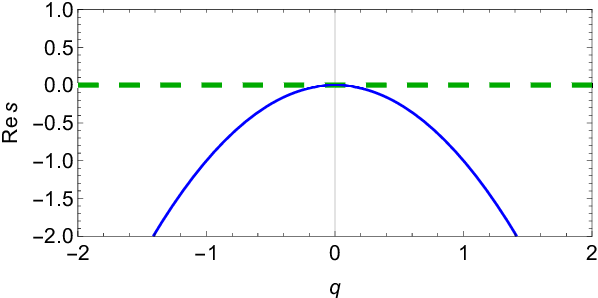}\label{f:ModelA-Scalar-Zero-largedelta}}
\hfil
\subfloat[$\delta = 3/4 \in (2/3,1)$, $a>0$]{\includegraphics[width=0.3\linewidth]{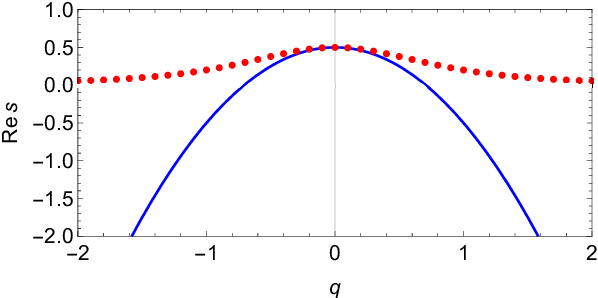}\label{f:ModelA-Scalar-Positive-largedelta}}
\hfil
\subfloat[$\delta = 1/4 \in (0,1/3)$, $a<0$]{\includegraphics[width=0.3\linewidth]{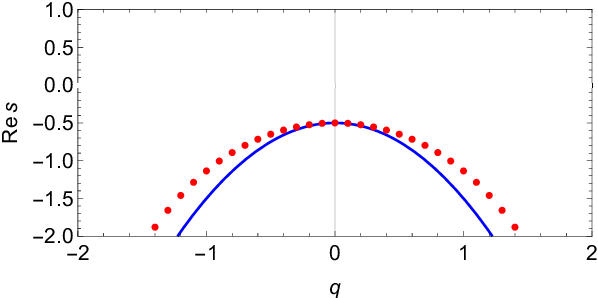}\label{f:ModelA-Scalar-Negative-smalldelta}}
\hfil
\subfloat[$\delta = 1/4 \in (0,1/3)$, $a=0$]{\includegraphics[width=0.3\linewidth]{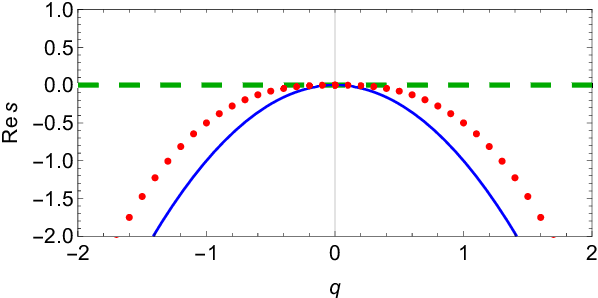}\label{f:ModelA-Scalar-Zero-smalldelta}}
\hfil
\subfloat[$\delta = 1/4 \in (0,1/3)$, $a>0$]{\includegraphics[width=0.3\linewidth]{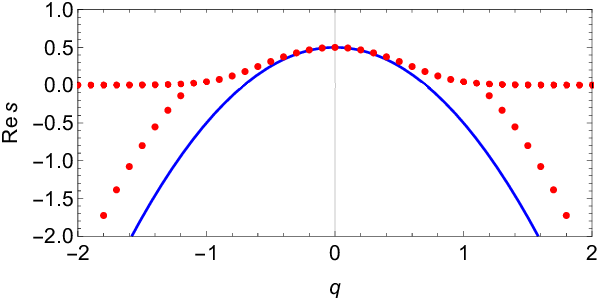}\label{f:ModelA-Scalar-Positive-smalldelta}}
\caption{Model \eqref{e:ModelA-scalar}. Blue solid lines: regular spectra; red dotted lines: subdiffusion (pseudo-)spectrum; green dashed lines: $s(q)=0\notin\Omega_0$. Here $\theta_1 = \pi/2$. In (b) the (pseudo-)spectrum is empty. In case $\delta\in[1/3,2/3]$: the (pseudo-)spectrum for $a=0$ and $a>0$ are qualitatively same as  (b) and (c), respectively, whereas the pseudo-spectrum for $a<0$ (hollow circle) is empty for large $|q|$.}
\label{f:ModelA-Scalar}
\end{figure}	

Solving \eqref{e:ModelA-ScalarDR} explicitly for general $a\in\RR$, $s\in\Omega_0$, seems not possible, but we can approximate solutions for $|q|\gg 1$ as follows. 

\begin{lemma}
	For any $\delta\in(0,1)$, there exists a $Q>0$, such that for $q>Q$, the solutions to \eqref{e:ModelA-ScalarDR} are  approximated by $s_{0}(q)$ and $s_{\infty}(q)$, 	where
	\begin{itemize}
	\item[(1)] $s_{0}(q) \in \Omega_0^+$ and $\lim_{|q|\to\infty} s_{0}(q) = 0$ for any $a > 0$,
	
	\item[(2)] $s_{0}(q) \in \Omega_0^-$ and $\lim_{|q|\to\infty} s_{0}(q) = 0$ for any $a < 0$ and $\delta \in (\pi/(\pi+\theta_1),1)$,		
	\item[(3)] ${s_{\infty}}(q) \in \Omega_0^-$ and $\lim_{|q|\to\infty} s_{\infty}(q) = -\infty$ for any $\delta \in (0, \theta_1/(\pi+\theta_1))$.
	\end{itemize}
\end{lemma}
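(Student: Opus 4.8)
The plan is to analyse the dispersion relation $d_\ss(s,q^2)=s+dq^2 s^\delta - a=0$ for $|q|\gg 1$ by looking for a dominant balance between two of the three terms, since for large $q$ the middle term is large and must be balanced either by $s$ (small $s$) or by $a$ (still small $s$). First I would rescale as in the proof of Lemma~\ref{l:ScalarPositiveSol}: writing $s=re^{i\theta}$ and substituting, the vanishing of the imaginary part forces $r\sin\theta + dq^2 r^\delta \sin(\delta\theta)=0$. For $s\ne 0$ this is $r^{1-\delta}\sin\theta = -dq^2\sin(\delta\theta)$; since the right-hand side is bounded for $\theta$ away from $0$ only if $r$ compensates, we see that solutions must either have $\theta\to 0$ (the ``$s_0$'' family, $r\to 0$) or $\theta$ bounded away from $0$ with $r\to\infty$ (the ``$s_\infty$'' family). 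This dichotomy is the structural backbone of the statement.

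For the $s_0$ family I would set $s$ small and balance $dq^2 s^\delta \approx a$, i.e.\ $s_0(q)\approx (a/(dq^2))^{1/\delta}$, which tends to $0$ as $|q|\to\infty$. When $a>0$ this gives a positive real value $(a/(dq^2))^{1/\delta}>0$, so $s_0(q)\in\Omega_0^+$, proving (1). When $a<0$, $(a/(dq^2))^{1/\delta}$ has argument $\pi/\delta$ (choosing the branch consistent with $\Omega_0$, whose arguments range over $(-\pi+\theta_1,\pi+\theta_1)$); this point lies in $\Omega_0^-$ precisely when $\pi/\delta<\pi+\theta_1$, equivalently $\delta>\pi/(\pi+\theta_1)$, which is the hypothesis in (2). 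One must check that the neglected term $s$ is genuinely lower order — it is, since $|s|=O(q^{-2/\delta})$ while $dq^2|s|^\delta=|a|$ — and then invoke an implicit function / perturbation argument (or Rouché) to upgrade the leading-order balance to an actual root $s_0(q)$ with the claimed limit.

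For the $s_\infty$ family I would balance the two growing terms $s$ and $dq^2 s^\delta$ against each other, neglecting the constant $a$: $s\approx -dq^2 s^\delta$, giving $s^{1-\delta}\approx -dq^2$, hence $s_\infty(q)\approx (-dq^2)^{1/(1-\delta)}$, which has modulus $(dq^2)^{1/(1-\delta)}\to\infty$ and argument $\pi/(1-\delta)$. For this to lie in $\Omega_0^-$ one needs $\pi/(1-\delta) < \pi+\theta_1$, i.e.\ $1-\delta > \pi/(\pi+\theta_1)$, i.e.\ $\delta < \theta_1/(\pi+\theta_1)$, matching (3); and one checks $|a|$ is negligible against $|s_\infty|=(dq^2)^{1/(1-\delta)}\gg 1$. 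Again a Rouché-type argument on a small circle around the leading-order value promotes the approximation to a genuine root. Choosing $Q$ large enough uniformly handles all three cases simultaneously, and the fact that $d_\ss$ is (for rational $\gamma$) a polynomial in $s^{1/m}$ of fixed degree lets one argue that these are \emph{all} the relevant roots, not merely \emph{some}.

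The main obstacle I expect is the branch-cut bookkeeping: the set $\Omega_0$ is a sector of opening $2\pi$ rotated by $\theta_1$, and deciding whether $(a/(dq^2))^{1/\delta}$ or $(-dq^2)^{1/(1-\delta)}$ actually lands in $\Omega_0^-$ (rather than outside $\Omega_0$ or in $\Omega_0^+$ or on the cut) requires carefully tracking which determination of the fractional power is the one relevant to $d_\ss$ on $\Omega_0$, and confirming the resulting argument ranges give exactly the stated $\delta$-intervals. A secondary technical point is making the error estimates uniform in $q$ for $q>Q$ and verifying that the perturbation argument (nonvanishing of $\partial_s d_\ss$ at the approximate root, or a winding-number count) is valid on the relevant annulus — this is routine in spirit but must be done with the correct branch in hand.
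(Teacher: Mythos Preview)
Your proposal is correct and follows essentially the same route as the paper: the paper likewise obtains the leading-order approximations $s_0(q)\approx (a/(dq^2))^{1/\delta}$ and $s_\infty(q)\approx (-dq^2)^{1/(1-\delta)}$ by a dominant-balance / rescaling argument (deferring the systematic derivation to the proof of Lemma~\ref{l:AsymptoticSolution}), and then checks exactly the same argument conditions $\pi/\delta\in(-\pi+\theta_1,\pi+\theta_1)$ and $\pi/(1-\delta)\in(-\pi+\theta_1,\pi+\theta_1)$ to place the roots in $\Omega_0^\pm$. Your extra care about the Rouch\'e / implicit-function justification is more explicit than the paper's terse ``straightforward computation,'' but the underlying argument is the same.
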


\begin{proof}
	A straightforward computation gives the solutions to \eqref{e:ModelA-ScalarDR} for $|q| \gg 1$ as	
	\begin{align}
	s_0(q) & = \left(\frac{a}{dq^2}\right)^{1/\delta} + o\left(q^{-2/\delta}\right),\\
	s_{\infty}(q) & = \left(-dq^2\right)^{\frac{1}{1-\delta}} + \frac{a}{1-\delta} + o(1),
	\end{align}
	where $\lim_{|q|\to\infty}o(1) = 0$. See also the proof of Lemma \ref{l:AsymptoticSolution} for $s_0(q)$ and $s_\infty(q)$.
	
	\smallskip
	Concerning $s_0(q)$: For $a > 0$ the leading order term is positive real and $\lim_{|q|\to\infty} s_{0}(q) = 0$, which coincides with Lemma \ref{l:ScalarPositiveSol}. For $a < 0$ the argument of the leading order term is $\pi / \delta$ so $\arg(s_0) \in \Omega_0$ if $\pi/\delta \in (-\pi+\theta_1, \pi+\theta_1)$ which leads to $\delta > \pi/(\pi+\theta_1)$ (cf.\ Fig.~\ref{f:ModelA-Scalar-Negative-largedelta}). Since $\delta \in (0,1)$, we have $\pi/\delta > \pi$, which leads to $\Re(s_0) < 0$.
	
	\smallskip
	Concerning $s_{\infty}(q)$: We note that the leading order term is independent of $a$ and its argument is $\pi/(1-\delta)$. Then we have $\arg(s_\infty) \in \Omega_0$ if $\pi/(1-\delta) \in (-\pi+\theta_1, \pi+\theta_1)$ which implies $\delta < \theta_1/(\pi+\theta_1)$ (cf.\ bottom row in Fig.~\ref{f:ModelA-Scalar}). Since $\delta \in (0,1)$, we have $\pi/(1 - \delta) > \pi$, which leads to $\Re(s_\infty) < 0$.
\end{proof}

\begin{remark}
The choice of branch cut is relevant here: if $\theta_1 \to 0$, then $\pi/(\pi+\theta_1) \to 1$ and $\theta_1/(\pi+\theta_1) \to 0$, which leads to the disappearance of red dotted (pseudo)-spectrum in Fig.~\ref{f:ModelA-Scalar}. Nevertheless, it is instructive to choose $\theta_1 = \pi/2$ as this reveals all phenomena in the pseudo-spectrum, in particular regarding the relation to the regular spectrum. 

We note that the plotting of $s=0$ (green dashed) in the middle column of Fig.~\ref{f:ModelA-Scalar} only shows the transition of $s_0(q)$ from negative to positive but is not in the (pseudo-)spectrum. Moreover, for instance in Fig.~\ref{f:ModelA-Scalar-Zero-largedelta}, the (pseudo-)spectrum is empty. Yet, by Theorem \ref{t:ILT-ss} there is still algebraic decay, at least for rational $\delta$.
\end{remark}

\medskip
For constant initial $u(0,x)=u_0\in\RR$ \eqref{e:ModelA-scalar} is the ODE $\dot u = au$ so $a<0$ indeed yields exponential decay.
Fourier-transforming \eqref{e:ModelA-scalar} in $x$ gives
\begin{equation}\label{e:scalarFourier}
\partial_t \hat u = -dq^2 \fD_{0,t}^{1-\gamma} \hat u + a \hat u,
\end{equation}
and, for rational $\gamma=n/m$ (reduced fraction), Theorem \ref{t:ILT-ss} implies each Fourier mode decays algebraically as $t^{\gamma-2}$ for $a<0$.

\subsection{Convergence to regular spectrum}\label{s:Convergence}

We return to the system \eqref{e:FracLinearSystem} and study the convergence of subdiffusion (pseudo-)spectrum for $\gamma\to 1$, i.e., as the anomalous exponent  approaches normal diffusion $\gamma=1$. With $\delta=1-\gamma$ the dispersion relation \eqref{e:ModelADRExtend} can be written as
\begin{equation}\label{e:FracDR}
	D_\ss(s,q^2) = (s + s^\delta q^2 - a_{1})(s + s^\delta d q^2 - a_{4}) - a_{2} a_{3} = 0, \quad s \in \Omega_0,
\end{equation}
and we consider $\delta \to 0$. In preparation, we note that the difference between subdiffusion and regular dispersion relation is 
\begin{align*}
	E(s,q^2) = D_\ss(s,q^2) - D_{\reg}(s,q^2) = (s^\delta - 1) [(q^2 + d q^2) s + d q^4 s^\delta + d q^4 - a_{4} q^2 - a_{1} d q^2].
\end{align*}

\begin{lemma}\label{l:ConvergeLocallyUniformly}
	The subdiffusion (pseudo-)spectrum converges to the regular spectrum locally uniformly in $q \in \RR$ as $\gamma \to 1$.
\end{lemma}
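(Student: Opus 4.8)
The plan is to prove Lemma~\ref{l:ConvergeLocallyUniformly} via a Rouch\'e-type argument applied to the holomorphic function $D_\ss(\cdot,q^2)$ on suitable compact subsets of $\Omega_0$, exploiting that $D_\ss \to D_{\reg}$ uniformly on compacta as $\delta\to0$. Fix a compact wavenumber interval $K=[-Q,Q]\subset\RR$. For each $q\in K$, the regular dispersion relation $D_{\reg}(\cdot,q^2)$ is a polynomial of degree $2$ in $s$ with two roots $s_1(q),s_2(q)$ depending continuously on $q$; these lie in some large fixed disc $B_R(0)$ uniformly over $q\in K$. Around each root we draw a small circle $\partial B_\varepsilon(s_i(q))$; for $\varepsilon$ small enough (and after separating the roots, treating a double root as a radius-$\varepsilon$ disc containing both), these circles lie in $\Omega_0$ and are bounded away from the branch point $s=0$ and from the branch cut $\BC_0^{\theta_1}$. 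On the compact annular region so obtained, $D_{\reg}(\cdot,q^2)$ is bounded below in modulus by some $c(q,\varepsilon)>0$.

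The key estimate is that $E(s,q^2)=D_\ss(s,q^2)-D_{\reg}(s,q^2)=(s^\delta-1)\,[\,\cdots]$, as displayed just before the lemma, tends to $0$ as $\delta\to0$ uniformly for $(s,q)$ in any compact subset of $(\CC\setminus\{0\})\times K$: indeed $s^\delta=e^{\delta\log s}\to1$ uniformly since $\log s$ (with the branch fixed by $\Omega_0$) is bounded on such a set, and the bracketed factor is a fixed continuous function of $(s,q,\delta)$, hence bounded. So for $\delta$ small enough, $|E(s,q^2)|<|D_{\reg}(s,q^2)|$ on every circle $\partial B_\varepsilon(s_i(q))$; Rouch\'e's theorem then gives that $D_\ss(\cdot,q^2)$ has, counting multiplicity, exactly as many roots inside $B_\varepsilon(s_i(q))$ as $D_{\reg}(\cdot,q^2)$. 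Conversely, covering $B_R(0)\cap\Omega_0$ minus the small discs by a compact set on which $D_{\reg}$ is bounded below, the same estimate shows $D_\ss$ has no roots there for $\delta$ small; and roots of $D_\ss$ with $|s|$ large are excluded by a growth argument (the dominant term of $D_\ss$ is $s^2$ for $|s|\to\infty$ at fixed $q$, while the correction terms are lower order). Thus for $\delta$ small every subdiffusion root lies within $\varepsilon$ of a regular root and vice versa.

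To upgrade pointwise-in-$q$ to locally-uniform-in-$q$ one notes all the bounds above can be taken uniform over $q\in K$: the roots $s_i(q)$ vary continuously on the compact $K$, the separation radius $\varepsilon$ and the lower bound $c$ can be chosen uniformly (using continuity/compactness, with care at the finitely many $q$ where $D_{\reg}$ has a double root, handled by taking a single enclosing disc), and the smallness threshold for $\delta$ in the Rouch\'e estimate is then uniform over $K$ as well. This yields: for every $\varepsilon>0$ there is $\delta_0>0$ such that for all $\delta<\delta_0$ the Hausdorff distance between $\Lambda_\ss\cap(B_R(0)\times\text{(}q\in K\text{)})$-slices and $\Lambda_\reg$-slices is $<\varepsilon$, uniformly in $q\in K$, which is the claimed locally uniform convergence.

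The main obstacle I anticipate is the behaviour near the branch point $s=0$: the factor $s^\delta$ is not continuous in $\delta$ at $s=0$, so the uniform estimate on $E$ degenerates there, and one must check that no subdiffusion roots escape into a neighbourhood of $0$ (equivalently, that $s=0$ is not a spurious accumulation point of $\Lambda_\ss$ as $\delta\to0$ unless it is already a regular root). This is where the structure of the problem matters: $D_\ss(0,q^2)=(-a_1)(-a_4)-a_2a_3=\det(\A)$ by continuity of the bracket (the term $dq^4 s^\delta\to0$ as $s\to0$ for fixed $\delta>0$, but one needs the limit $s\to0$ jointly with $\delta\to0$), and under the standing Turing hypothesis $\det(\A)>0$, so $D_\ss$ is bounded away from $0$ on a fixed small punctured disc around the origin for all small $\delta$ — ruling out escaping roots there. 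One must phrase this carefully because of the iterated-limit subtlety; the cleanest route is to observe $|s^\delta|\le \max(1,|s|^{\delta})$ stays bounded and $|s+s^\delta dq^2-a_4|\to|a_4|>0$ etc. as $s\to0$ uniformly in small $\delta$, giving a uniform lower bound on $|D_\ss|$ near $0$.
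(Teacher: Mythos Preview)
Your overall approach is the paper's: set $f=D_{\reg}(\cdot,q^2)$, $g=E(\cdot,q^2)=D_\ss-D_{\reg}$, observe that $g\to0$ locally uniformly on $\Omega_0$ as $\delta\to0$ (since $s^\delta-1=e^{\delta\ln s}-1$ is holomorphic there and tends to $0$), and apply Rouch\'e on small circles about the regular roots $s_1(q),s_2(q)$. You are in fact more explicit than the paper on several points the paper leaves implicit: the uniformity of all bounds over $q\in K$, the exclusion of roots outside the small discs via a lower bound on $|f|$, and the exclusion of roots with $|s|$ large via the dominant $s^2$ term.

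Where you and the paper differ is the treatment of the branch point $s=0$, and here your proposed fix has a genuine flaw. The assertion that $|s+s^\delta dq^2-a_4|\to|a_4|$ as $s\to0$ \emph{uniformly in small $\delta$} is false: for $0<|s|<1$ one has $\sup_{0<\delta<\delta_0}|s|^\delta=1$, so along paths with $\delta\downarrow0$ fast relative to $|s|\downarrow0$ one gets $s^\delta\to1$ and the factor tends to $dq^2-a_4$, not $-a_4$. In particular the limit of $D_\ss$ along such paths is $D_{\reg}(0,q^2)$, not $\det(\A)$, and this vanishes precisely when $0\in\Lambda_\reg$ (e.g.\ at the Turing threshold), so $\det(\A)>0$ cannot by itself supply the uniform lower bound you want. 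The paper sidesteps the joint-limit issue entirely by a contour argument: take $C\subset\Omega_0$ whose interior covers a region near $0$ but excludes a small ball $B_\epsilon(0)$ and the cut $\BC_0^{\theta_1}$; since $C$ and its interior form a compact subset of $\Omega_0$, the estimate $|g|<|f|$ is available there, $f$ has no zero inside $C$, and Rouch\'e forces any zero of $D_\ss$ in that region into $B_\epsilon(0)$. This handles both the case where $0$ is a regular root (the nearby subdiffusion root is trapped in $B_\epsilon(0)$) and the case where it is not (no subdiffusion root appears near $0$ within $\Omega_0$), without any estimate at $s=0$ itself.
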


\begin{proof}
	Denote $f(s):=D_{\reg}(s,q^2)$ and $g(s):=E(s,q^2)$, then $f(s)+g(s)=D_\ss(s,q^2)$. First, we claim that for fixed parameters and wavenumber $q$, $g(s) \to 0$ locally uniformly in $s \in \Omega_0$ as $\delta \to 0$. This follows from $s^\delta-1=e^{\delta\ln s}-1$ being holomorphic in $\Omega_0$ with $e^{\delta\ln s}-1 \to 0$ pointwise in $\Omega_0$ as $\delta\to0$.
	
	\smallskip
	Second, we discuss $f(s)$. For fixed parameters and wavenumber $q$, there are two regular eigenvalues $s_1,\, s_2 \in \CC$ and $f(s) \neq 0$ for $s \neq s_1,\, s_2$, cf.\ Fig.~\ref{f:Rouche1}. We choose two disjoint open balls $B_{r_1}(s_1)$ and $B_{r_2}(s_2)$, where $B_{r_*}(s_*) := \{s \in \Omega_0 : |s-s_*| < r_* \}$. Then we have $f(s) \neq 0$ for $s \in \partial B_{r_j}(s_j),\ j=1,2$. From the first step, we know that $g(s) \to 0$ as $\delta \to 0$, hence $|f(s)| > |g(s)|$ for $s \in \partial B_{r_j}(s_j),\ j=1,2$ as $\delta \to 0$. Since $f(s)$ and $g(s)$ are holomorphic in $\Omega_0$, Rouch\'{e}'s theorem implies $f+g$ also has two zeros $s_1(\delta)$ and $s_2(\delta)$ inside $B_{r_1}(s_1)$ and $B_{r_2}(s_2)$, respectively. Since we can choose $r_j$ arbitrarily small as $\delta \to 0$ we have $s_j(\delta) \to s_j$ locally uniformly in $q$, $j=1,2$.

	\smallskip
	However, the regular spectrum can contain $0$, whereas $g(s)$ is not holomorphic on $\BC_0^{\theta_1}$. In such a case, we take a neighbourhood $B_{\epsilon}(0)$ of the origin and choose a contour $C$ such that its interior covers a region near $0$, but excludes $B_{\epsilon}(0)$ and $\BC_0^{\theta_1}$, cf.\ Fig.~\ref{f:Rouche2}. Since there is no zero of $f$ inside $C$, Rouch\'{e}'s theorem implies that there is no zero of $f+g$ inside $C$ as $\delta \to 0$ either. Hence the zeros of $f+g$ have to be in $B_{\epsilon}(0)$ and also such zeros of $f+g$ will converge to $0$ as $\delta\to0$.
\end{proof}
\begin{figure}[t]
	\centering
	\subfloat[]{\includegraphics[width=0.35\linewidth]{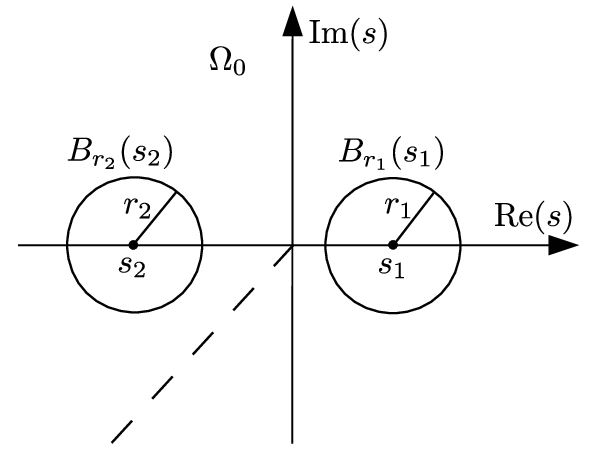}\label{f:Rouche1}}
	\hfil
	\subfloat[]{\includegraphics[width=0.35\linewidth]{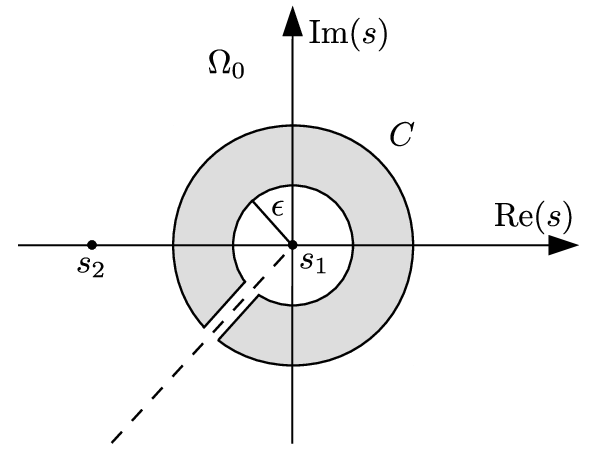}\label{f:Rouche2}}
	\caption{Dashed straight lines: branch cut $\BC_0^{\theta_1}$. Illustrations of the case when the regular spectrum at fixed $q$ is (a) different from $0$ (b) contains the origin; here the grey region is the interior of the contour $C$.}
	\label{f:Rouche}
\end{figure}

\begin{theorem}\label{t:ConvergenceModelA}
	For any compact set $K \subset \subset \Omega_0$, $\lim_{\gamma\to1} (K \cap \Lambda_\ss) = (K \cap \Lambda_\reg)$.
\end{theorem}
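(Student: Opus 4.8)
The plan is to upgrade the locally uniform convergence of the dispersion-relation roots established in Lemma~\ref{l:ConvergeLocallyUniformly} into the stated set convergence on an arbitrary compact $K \subset\subset \Omega_0$. The key point is that $\Lambda_\ss$ and $\Lambda_\reg$ are, by definition, the unions over $q\in\RR$ of the (at most two) roots $s_j(\delta,q)$ and $s_j(q)$ of $D_\ss(\cdot,q^2)$ and $D_\reg(\cdot,q^2)$; Lemma~\ref{l:ConvergeLocallyUniformly} gives $s_j(\delta,q)\to s_j(q)$ uniformly for $q$ in compact sets. Since $K$ is compact and bounded away from $\partial\Omega_0$ and (for the boundary-of-$K$ arguments) from the branch cut, only a bounded range of wavenumbers is relevant: for $|q|$ large the regular roots satisfy $\Re(s_j(q)) \sim -\min(1,d)q^2 \to -\infty$ and likewise $|s_\infty(q)|\to\infty$ while $s_0(q)\to 0$ may re-enter $K$ only through a compact $q$-window, so there is a compact interval $I\subset\RR$ with $\{s: D_\ss(s,q^2)=0 \text{ or } D_\reg(s,q^2)=0 \text{ for some } |q|>R\}\cap K = \emptyset$ for suitable $R$; set $I=[-R,R]$.

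First I would make the two inclusions precise. For the inclusion $\limsup_{\gamma\to1}(K\cap\Lambda_\ss)\subseteq K\cap\Lambda_\reg$: take $s_*\in K$ with $s_*=\lim s_{j_n}(\delta_n,q_n)$ for some $\delta_n\to0$; by the reduction to $q\in I$ compact, pass to a subsequence $q_n\to q_*\in I$, and uniform convergence plus continuity of $s_j(\cdot)$ gives $s_{j_n}(\delta_n,q_n)\to s_j(q_*)$, hence $s_*=s_j(q_*)\in\Lambda_\reg$; since $K$ is closed, $s_*\in K\cap\Lambda_\reg$. For the reverse inclusion, $K\cap\Lambda_\reg\subseteq\liminf_{\gamma\to1}(K\cap\Lambda_\ss)$: given $s_*=s_j(q_*)\in K\cap\Lambda_\reg$, uniform convergence gives $s_j(\delta,q_*)\to s_*$, so eventually $s_j(\delta,q_*)$ lies in any prescribed neighbourhood of $s_*$; the only subtlety is that $s_j(\delta,q_*)$ need not lie in $K$, but since $s_*\in K$ and $K$ is (we may take it to be) regular-closed or we simply phrase the conclusion with an $\varepsilon$-neighbourhood, this is routine Hausdorff-distance bookkeeping. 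Combining, $d_H(K\cap\Lambda_\ss,\,K\cap\Lambda_\reg)\to0$, which is the asserted limit.

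The main obstacle I anticipate is the case flagged in Lemma~\ref{l:ConvergeLocallyUniformly} where the regular spectrum reaches the branch point $s=0$: if $0\in K$ (or $K$ abuts the branch cut $\BC_0^{\theta_1}$) then $E(s,q^2)$ is not holomorphic there and the Rouch\'e argument only controls roots in a shrinking ball $B_\epsilon(0)$, not on the cut itself. I would handle this exactly as in the lemma --- excise a small disc $B_\epsilon(0)$ and the cut, apply Rouch\'e on the remaining region to confine the $\delta$-roots to $B_\epsilon(0)$, and then let $\epsilon\to0$ --- noting that this still yields convergence of the relevant roots to $0$, and that $0\notin\Omega_0$ so it contributes nothing to either spectrum inside $\Omega_0$; points of $K$ near but not at $0$ are then captured by the generic (holomorphic) argument with balls of radius comparable to their distance from the cut. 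A secondary technical point is verifying that the root count stays constant (no roots escaping to or entering from $\partial K$ or $\partial\Omega_0$): this is ensured by choosing, for each relevant $q$, the radii $r_j(q)$ in the Rouch\'e argument uniformly small over the compact $q$-interval $I$, which is possible because $f(s)=D_\reg(s,q^2)$ has its roots varying continuously and staying uniformly inside $\Omega_0$ for $q\in I$.
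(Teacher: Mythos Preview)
Your overall strategy---reducing to a compact wavenumber interval and then running a Hausdorff-distance argument via Lemma~\ref{l:ConvergeLocallyUniformly}---is sound and in fact more explicit than the paper's own proof. Two points need repair.

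First, the assertion that $D_\ss(\cdot,q^2)$ has ``at most two'' roots is false: $D_\ss$ is not polynomial in $s$ (for rational $\delta=\ell/m$ the substitution $s=z^m$ gives a polynomial of degree $2m$), and $\Omega_0$ can contain roots beyond the two tracked by Rouch\'e near the regular eigenvalues---these are exactly the extra branches $s_{0\pm}$, $s_{\infty j}$ of Lemma~\ref{l:AsymptoticSolution}. This matters for your $\limsup$ inclusion, which as written only covers sequences lying on the root functions $s_j(\delta,q)$. The fix is to drop the root parametrisation and argue directly: $D_\ss(s_n,q_n^2)=0$, $(s_n,q_n)\to(s_*,q_*)\in K\times I$, and the locally uniform convergence $D_\ss\to D_\reg$ on $K\times I$ give $D_\reg(s_*,q_*^2)=0$.

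Second, the reduction to a compact $q$-interval must be \emph{uniform} in small $\delta$, whereas your justification via $s_0(q)\to0$ and $|s_\infty(q)|\to\infty$ is an asymptotic for each fixed $\delta$. A clean uniform bound avoids Lemma~\ref{l:AsymptoticSolution} altogether: since $K\subset\subset\Omega_0$ is bounded away from $0$, one has $s^\delta\to1$ uniformly for $s\in K$ as $\delta\to0$; writing $w:=s^\delta q^2$, the equation $D_\ss(s,q^2)=0$ is quadratic in $w$ with leading coefficient $d$ and remaining coefficients bounded on $K$, so $w$ is bounded, and hence so is $q^2$, uniformly for $s\in K$ and small $\delta$.

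For comparison, the paper handles large $|q|$ by forward-referencing Lemma~\ref{l:AsymptoticSolution} and Corollary~\ref{c:smalldeltanonexist}, which show that the extra branches $s_{0\pm}$ leave $\Omega_0$ entirely for small $\delta$ (with a case split on whether $0\in\overline{\Lambda_\reg}$). Your argument, once the two gaps above are filled, is more self-contained and sidesteps both the forward references and the case distinction.
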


\begin{proof}
	Lemma \ref{l:ConvergeLocallyUniformly} shows that the subdiffusion (pseudo-)spectrum converges to the regular spectrum in any compact set of wavenumber. Lemma \ref{l:AsymptoticSolution} below tells us the subdiffusion (pseudo-)spectrum either tends to $0$ (denote by $s_0(q)$) or the real part tends to $-\infty$ for $|q|\gg 1$, i.e., outside the compact set of wavenumber. If the regular spectrum contains $0$, then the (pseudo-)spectrum naturally converges to the regular one in a compact set of $\Omega_0$. If the regular spectrum is negative for maximum, then Corollary \ref{c:smalldeltanonexist} below shows that $s_0(q)$ does not exist in $\Omega_0$ for $|q|\gg 1$ as $\delta\to0$. Therefore, the claim is proved.
\end{proof}

\medskip
We illustrate the convergence theorem numerically in Fig.~\ref{f:SpectrumComplexPlane}.

\subsection{Real spectrum and (pseudo-)spectrum for large wavenumber}\label{s:AsymptoticSpectrum}

As in the scalar case, we cannot solve the dispersion relation \eqref{e:FracDR} explicitly in general. However, as in Lemma~\ref{l:ScalarPositiveSol} we can determine real unstable spectrum as follows. This was observed in \cite[eq.~4.7]{Nec2007} and we include the proof for completeness.

\begin{lemma}\label{l:realunstablespec}
$D_{\reg}(s_r,\kappa^2) = 0$ with $\kappa \in \RR$ and $s_r > 0$ if and only if $D_\ss(s_r,q^2) = 0$ with $q = \kappa s_r^{-\delta/2} \in \RR$.
\end{lemma}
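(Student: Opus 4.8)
The plan is to prove the equivalence by direct substitution exploiting the rescaling $q^2 = \kappa^2 s^{-\delta}$, which is well-defined precisely because we are working with $s = s_r > 0$ (so $s_r^{-\delta/2}$ is a positive real number with no branch ambiguity). First I would recall that
\[
D_\ss(s,q^2) = (s + s^\delta q^2 - a_1)(s + s^\delta d q^2 - a_4) - a_2 a_3,
\]
and observe that the wavenumber $q$ enters only through the combination $s^\delta q^2$. Setting $q = \kappa s_r^{-\delta/2}$ gives $s_r^\delta q^2 = s_r^\delta \kappa^2 s_r^{-\delta} = \kappa^2$, so each factor becomes $s_r + \kappa^2 - a_1$ and $s_r + d\kappa^2 - a_4$ respectively. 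Hence $D_\ss(s_r, q^2) = (s_r + \kappa^2 - a_1)(s_r + d\kappa^2 - a_4) - a_2 a_3 = D_{\reg}(s_r, \kappa^2)$, and the two expressions vanish simultaneously.

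For the converse direction the same algebraic identity runs backwards: given $\kappa \in \RR$ and $s_r > 0$ with $D_{\reg}(s_r,\kappa^2) = 0$, define $q := \kappa s_r^{-\delta/2}$, which lies in $\RR$ since $s_r > 0$, and the computation above shows $D_\ss(s_r, q^2) = D_{\reg}(s_r, \kappa^2) = 0$. Conversely, given $q \in \RR$ and $s_r > 0$ with $D_\ss(s_r, q^2) = 0$, set $\kappa := q s_r^{\delta/2} \in \RR$; then $s_r^\delta q^2 = \kappa^2$ and the identity yields $D_{\reg}(s_r, \kappa^2) = 0$. I would also note that $s_r > 0$ lies in $\Omega_0^+$ regardless of $\theta_1$, so the branch choice is irrelevant for this particular statement — the point is exactly that on the positive real axis $s^\delta$ is unambiguous.

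There is essentially no obstacle here: the whole content is the observation that the dispersion relation depends on $q$ and $s$ only through $s$ and $s^\delta q^2$, so a $q$-rescaling by the (real, positive) factor $s_r^{-\delta/2}$ converts one relation into the other. The only point requiring a word of care is that the rescaling is an honest bijection between real $q$ and real $\kappa$ for fixed $s_r > 0$ — which is immediate since $s_r^{\pm\delta/2} > 0$ — so no solutions are gained or lost. This is the higher-dimensional analogue of the rescaling $q^2 = \kappa^2/s^\delta$ already used in the proof of Lemma~\ref{l:ScalarPositiveSol} and in \cite[Section 4.1]{Nec2007}.
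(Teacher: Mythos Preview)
Your proof is correct and takes essentially the same approach as the paper: both exploit the direct algebraic identity $D_{\reg}(s_r,\kappa^2) = D_\ss(s_r,\kappa^2 s_r^{-\delta})$ via the rescaling $q = \kappa s_r^{-\delta/2}$, valid for $s_r>0$. You are slightly more explicit about the converse direction and the bijectivity of the rescaling, but the argument is the same.
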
 

\begin{proof}
	Rescaling the wavenumber of the regular and subdiffusion dispersion relation \eqref{e:RegDR} and \eqref{e:FracDR}, we find the relation
	\begin{align*}
	& D_{\reg}(s_r(\kappa),\kappa^2) = (s_r(\kappa) + \kappa^2 - a_{1})(s_r(\kappa) + d \kappa^2 - a_{4}) - a_{2} a_{3}\\
	&\quad = \left(s_r(\kappa) + s_r^\delta(\kappa)\frac{\kappa^2}{s_r^\delta(\kappa)} - a_{1}\right)\left(s_r(\kappa) + s_r^\delta(\kappa)\frac{d\kappa^2}{s_r^\delta(\kappa)} - a_{4}\right) - a_{2} a_{3}\\
	&\quad = D_\ss\left(s_r(\kappa),\frac{\kappa^2}{s_r^\delta(\kappa)}\right) = D_\ss(s_r(\kappa),q^2(\kappa)),
	\end{align*}
	where
	\begin{equation}\label{e:RescaleWavenumber}
	q(\kappa) =  \kappa s_r^{-\delta/2}(\kappa),\quad q(\kappa) \in \RR ,
	\end{equation}
	and $s_r(\kappa)$ denotes the regular spectrum with wavenumber $\kappa$. 
\end{proof}

\medskip
The following consequence of the lemma was noticed in \cite[Section 4.1]{Nec2007}. Again for completeness and in preparation of the following, we include the simple proof.

\begin{proposition}\label{p:PositiveRealSpectrumSufficientExplicit} 
	For any $\delta\in(0,1)$, if the diffusion coefficient $d>d_c$, then there exists a curve of real and strictly positive subdiffusion spectrum $s_0(q)$ with $q\in[\pm q_{\min},\pm\infty)$. Furthermore, $\lim_{d\to d_c}q_{\min}=\infty$ and $\lim_{|q|\to\infty}s_0(q)=0$. Specifically, $s_0(q)=s_r(\kappa)$ and $q = \kappa s_r^{-\delta/2} \in\RR $.
\end{proposition}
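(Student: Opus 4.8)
The plan is to reduce everything to Lemma~\ref{l:realunstablespec}: it suffices to produce a one-parameter family of real, strictly positive regular eigenvalues $s_r(\kappa)$ with $\kappa\in\RR$, and then transport it into $\Lambda_\ss^+$ via the rescaling $q=\kappa\, s_r^{-\delta/2}$. So the first task is to describe explicitly the part of $\Lambda_\reg$ with positive real part when $d>d_c$, which is possible because $D_{\reg}(\,\cdot\,,q^2)$ is quadratic in $s$.

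Writing $x:=\kappa^2\ge0$, expand $D_{\reg}(s,x)=s^2+b(x)s+h(x)$ with $b(x)=(1+d)x-\tr\A$ and $h(x)=dx^2-(a_4+da_1)x+\det\A$. The Turing hypotheses of Section~\ref{s:Turinginstability} give $b(x)>0$ for all $x\ge0$ (since $\tr\A<0$), $h(0)=\det\A>0$ and $h(x)\to+\infty$; and $d>d_c$ is precisely the condition $\min_x h(x)<0$, the minimizer being the critical $x_c>0$. Hence $\{x\ge0:h(x)<0\}=(x_-,x_+)$ with $0<x_-<x_+<\infty$, and on this interval the discriminant satisfies $b(x)^2-4h(x)>b(x)^2>0$, so the two roots of the quadratic are real with exactly one positive, $s_r(x):=\tfrac{1}{2}\big(-b(x)+\sqrt{b(x)^2-4h(x)}\big)>0$, continuous in $x$, with $s_r(x_\pm)=0$ (at $x_\pm$ one root vanishes and the other equals $-b<0$). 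Off $(x_-,x_+)$ we have $h\ge0$ and $b>0$, forcing both roots into $\Re\le0$, so no other branch with positive real part can interfere.

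For each $\kappa$ with $\kappa^2\in(x_-,x_+)$, Lemma~\ref{l:realunstablespec} places $s_0(q):=s_r(\kappa^2)$ into $\Lambda_\ss$ at $q=\kappa\, s_r^{-\delta/2}(\kappa^2)\in\RR$; being strictly positive real it lies in $\Omega_0^+$, hence in $\Lambda_\ss^+$. The geometric heart of the argument is the endpoint behaviour: as $x\to x_\pm$ we have $s_r\to0^+$, so $s_r^{-\delta/2}\to\infty$, while $\kappa\to\sqrt{x_\pm}\in(0,\infty)$ stays bounded away from $0$ and $\infty$; therefore $q\to\infty$ at both ends of $(x_-,x_+)$. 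Since $q(\cdot)$ is continuous and positive on this connected interval and blows up at both ends, it attains a positive minimum $q_{\min}$ and has range $[q_{\min},\infty)$ (and $(-\infty,-q_{\min}]$ for $\kappa<0$, by the symmetry $q\mapsto-q$), which gives the curve over $q\in[\pm q_{\min},\pm\infty)$; restricting to a sub-interval where $q$ is monotone turns $s_0(q)$ into a genuine function if one wishes. Along the curve, $|q|\to\infty$ with $\kappa$ bounded forces $s_r^{-\delta/2}\to\infty$, i.e.\ $s_0(q)\to0$, which is $\lim_{|q|\to\infty}s_0(q)=0$.

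For $\lim_{d\to d_c}q_{\min}=\infty$ I would use the crude bound $q_{\min}\ge\sqrt{x_-}\,\big(\sup_{(x_-,x_+)}s_r\big)^{-\delta/2}$, valid since $\delta>0$. As $d\downarrow d_c$ the endpoints $x_\pm$ both converge to $x_c>0$, so $\sqrt{x_-}$ stays bounded below, while the depth of the dip of $h$ below zero shrinks to nothing, since $h$ attains its minimum value $\det\A-(a_4+da_1)^2/(4d)$, which tends to $0$ by the definition of $d_c$; together with $s_r\le|h|/b\le|h|/|\tr\A|$ on $(x_-,x_+)$ this gives $\sup_{(x_-,x_+)}s_r\to0$, so the lower bound on $q_{\min}$ diverges. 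The step I expect to need the most care is not any individual estimate but correctly extracting from the Turing setup that $d>d_c$ is equivalent to $\min_x h(x)<0$ and that the resulting unstable regular eigenvalue is real, so that the spectral branch recovered by the rescaling is exactly this one; everything downstream is elementary continuity and monotonicity.
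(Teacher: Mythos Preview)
Your proof is correct and follows essentially the same route as the paper's: both use Lemma~\ref{l:realunstablespec} to transport the positive real branch of the regular spectrum onto the subdiffusion spectrum via $q=\kappa\,s_r^{-\delta/2}$, and both read off the endpoint behaviour $s_r\to0$, $q\to\infty$ as $\kappa^2$ approaches the boundary of the Turing-unstable interval. You simply unpack more of the underlying Turing calculation---writing out $D_{\reg}(s,x)=s^2+b(x)s+h(x)$ explicitly, verifying that exactly one root is positive on $(x_-,x_+)$, and giving the quantitative bound $s_r\le|h|/b\le|h|/|\tr\A|$ to drive $\sup s_r\to0$ as $d\downarrow d_c$---where the paper just cites the standard Turing theory of Section~\ref{s:Turinginstability} and asserts $q_{\min}\to\infty$ directly from $q_{\min}^2=\min_\kappa \kappa^2/s_r^\delta$.
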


\begin{proof}
	For $d>d_c$ there is a curve of real unstable regular spectrum $s_r(\kappa)>0$ and $s_0(q)$ is given by Lemma~\ref{l:realunstablespec}, i.e., $s_0(q(\kappa))=s_r(\kappa)$.  Specifically, there is an interval $(\kappa_-^2,\kappa_+^2)$ such that $s_r(\kappa)>0$ for $\kappa^2\in(\kappa_-^2,\kappa_+^2)$. If $\kappa^2 \to \kappa_-^2,\kappa_+^2$, then $s_r(\kappa) \to 0$, which leads to $s_0(q) \to 0$ and $q(\kappa) \to \infty$. Furthermore, from \eqref{e:RescaleWavenumber} $q_{\min}^2 := \min_{\kappa^2\in(\kappa_-^2,\kappa_+^2)} q^2(\kappa) = \min_{\kappa^2\in(\kappa_-^2,\kappa_+^2)}\kappa^2/s_r^{\delta}(\kappa)$, and $q_{\min}^2 \to \infty$ as $d \to d_c$.
\end{proof}

\medskip
We illustrate Proposition \ref{p:PositiveRealSpectrumSufficientExplicit} numerically in Fig.~\ref{f:realpartspectrum}. Note that the curve from Proposition \ref{p:PositiveRealSpectrumSufficientExplicit} is only one part of the subdiffusion spectrum when $d > d_c$. Similar to the scalar case, next we discuss the (pseudo-)spectrum for $|q|\gg 1$. 

\begin{lemma}\label{l:AsymptoticSolution}
	For any $\delta\in(0,1)$ the solutions to \eqref{e:FracDR} for $|q|\gg 1$ are approximated by $s_{\infty 1}(q), s_{\infty 2}(q)$ and $s_{0\pm}(q)$ that satisfy $\lim_{|q|\to\infty} s_{0\pm}(q) = 0$ and $\lim_{|q|\to\infty}\Re(s_{\infty j}(q))=-\infty$, $j=1,2$. Moreover, there exists a $Q>0$ such that for $q>Q$ and $\delta \in (0, \theta_1/(\pi+\theta_1))$ we have
	\begin{align*}
		\Re(s_{\infty 1}(q)) & < \Re\left(\left(-Q^2\right)^{\frac{1}{1-\delta}} + \frac{a_{1}}{1-\delta} + 1\right) < 0,\\
		\Re(s_{\infty 2}(q)) & < \Re\left(\left(-d Q^2\right)^{\frac{1}{1-\delta}} + \frac{a_{4}}{1-\delta} + 1\right) < 0.
	\end{align*}
\end{lemma}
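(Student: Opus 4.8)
The plan is to treat the dispersion relation \eqref{e:FracDR} as a polynomial-type equation in $s$ perturbed by the fractional powers, and to carry out a dominant-balance analysis for $|q| \gg 1$. Writing $D_\ss(s,q^2) = (s + s^\delta q^2 - a_1)(s + s^\delta d q^2 - a_4) - a_2 a_3$, I would first observe that a solution must (to leading order) nearly annihilate one of the two factors, since $a_2 a_3$ is a fixed constant while the product of the two factors grows with $q$ away from their zero sets; more precisely, a root of the full relation lies within $O(|a_2 a_3| / (\text{other factor}))$ of a root of one factor. So it suffices to analyse the two scalar-type relations $s + s^\delta q^2 = a_i + (\text{small})$, $i = 1, 4$, exactly as in the scalar Lemma (the two displayed formulae for $s_0(q)$ and $s_\infty(q)$ preceding the convergence section), and then correct by the $a_2 a_3$ coupling term.

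For each scalar factor $s + s^\delta q^2 = a + O(q^{-\text{something}})$ there are two asymptotic branches: a \emph{small} branch where $s \to 0$, balancing $s^\delta q^2 \approx a$ so that $s \approx (a/q^2)^{1/\delta}$, yielding $s_{0\pm}(q)$ (the two choices coming from the two factors $i=1,4$, and from sign/branch ambiguity in the fractional root); and a \emph{large} branch where $s$ balances $-s^\delta q^2$, i.e., $s^{1-\delta} \approx -q^2$, so $s \approx (-q^2)^{1/(1-\delta)}$, yielding $s_{\infty 1}, s_{\infty 2}$. In each case I would substitute the leading ansatz, expand, and extract the next-order correction — for the large branch one gets $s_{\infty j}(q) = (-d_j q^2)^{1/(1-\delta)} + a_{\cdot}/(1-\delta) + o(1)$ with $d_1 = 1$, $d_2 = d$ — and then verify $\Re(s_{\infty j}) \to -\infty$ using that $\arg((-q^2)^{1/(1-\delta)}) = \pi/(1-\delta) > \pi$ when the branch $\Omega_0$ places this value legitimately, i.e., for $\delta \in (0, \theta_1/(\pi+\theta_1))$, so that the large real negative leading term dominates. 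The two explicit inequalities then follow by monotonicity of $\Re((-tq^2)^{1/(1-\delta)})$ in $q$ for $q > Q$: once $Q$ is chosen large enough that the $o(1)$ remainder is bounded by $1$ and the leading term at $Q$ is already negative, the bound at general $q > Q$ is inherited because $|q|^{2/(1-\delta)}$ is increasing while the argument is fixed. One must also check the coupling correction $a_2 a_3$ does not spoil these estimates, which is immediate since on the large branch the other factor has modulus growing like $|q|^2$ or faster, so the correction to $s$ is $o(1)$ and can be absorbed.

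Accounting issues aside, the genuine obstacle is rigor in the dominant-balance step: showing that \emph{all} solutions of \eqref{e:FracDR} for $|q|\gg1$ in $\Omega_0$ are captured by these four families, and that each family genuinely corresponds to a true root (not just a formal series). For the latter I would set up a fixed-point / implicit-function argument: rescale as in Lemma \ref{l:realunstablespec} (for the small branch, $q^2 = \kappa^2/s^\delta$ linearises the leading balance), or substitute $s = (-q^2)^{1/(1-\delta)}(1 + \eta)$ for the large branch and show the resulting equation for $\eta$ is a contraction on a small ball for $|q|$ large, using holomorphy in $\Omega_0$. Counting: the equation $D_\ss(s^m, q^2) = 0$ (clearing denominators via $s = z^m$, $\delta = \ell/m$) is a polynomial of degree $2m$ in $z$, and one should check the four asymptotic families account, with correct multiplicities, for the $z$-roots that correspond to $s \in \Omega_0$; the remaining $z$-roots land on other Riemann sheets and are excluded. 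I expect this bookkeeping, rather than any single estimate, to be the delicate part, and it is where I would spend the most care; the rest is the routine expansion already rehearsed in the scalar lemmas.
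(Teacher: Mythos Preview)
Your large-branch analysis is essentially the paper's: rescale so that $s$ balances $s^\delta q^2$, read off $s_{\infty j}(q)=(-d_jq^2)^{1/(1-\delta)}+a_{\cdot}/(1-\delta)+o(1)$, and use $\arg\bigl((-q^2)^{1/(1-\delta)}\bigr)=\pi/(1-\delta)\in(\pi,\pi+\theta_1)$ for $\delta\in(0,\theta_1/(\pi+\theta_1))$ to get the negativity and the displayed bounds. The paper does this via a systematic expansion in $\varepsilon=1/q$ rather than dominant balance plus contraction, but the content is the same.

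The gap is in your treatment of the small branch $s_{0\pm}$. You argue that a root of the full relation is within $O\bigl(|a_2a_3|/(\text{other factor})\bigr)$ of a root of one scalar factor, and then analyse the two factors separately, obtaining $s\approx (a_j/q^2)^{1/\delta}$ for $j=1,4$. But on the small branch $s\to 0$, so each factor $s+s^\delta q^2-a_j$ reduces to $s^\delta q^2-a_j$; if one factor vanishes, say $s^\delta q^2\approx a_1$, the other factor is $\approx d a_1-a_4=O(1)$, \emph{not} growing in $q$. Hence $a_2a_3$ is the same order as the product of the two factors, and your decoupling fails at leading order. The correct leading balance is the coupled quadratic in $y:=s^\delta q^2$,
\[
d\,y^2-(a_1 d+a_4)\,y+(a_1a_4-a_2a_3)=0,
\]
whose constant term is $\det(\A)$, not $a_1a_4$. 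The paper obtains this by a second rescaling (setting $z=s^\delta$ and repeating the $\varepsilon$-expansion), producing \eqref{e:ZeroSolHigherOrder} and then $s_{0\pm}=\varepsilon^{2/\delta}y_{1\pm}^{1/\delta}+o(\varepsilon^{2/\delta})$. For the bare claim $s_{0\pm}(q)\to 0$ this distinction does not change the conclusion, but your asserted leading term $(a_j/q^2)^{1/\delta}$ is wrong, and since the quadratic \eqref{e:ZeroSolHigherOrder} is the engine of the entire Turing analysis in Section~\ref{s:largewav} (the thresholds $d_\delta^\infty$, $\widetilde d_\delta^\infty$ all come from its discriminant and root locations), you would need to redo this part before proceeding.
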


\begin{proof}
	We seek the asymptotic approximation of solutions to \eqref{e:FracDR} for $|q|\gg 1$ by rescaling $q=\kappa/\varepsilon$, so $q\to\infty$ for $\varepsilon\to 0$. We show in Appendix \ref{s:ProofAsymptotic} that the approximate solutions are given by
	\begin{align}
		s_{\infty 1}(q) &= \left(-q^2\right)^{\frac{1}{1-\delta}}+\frac{a_{1}}{1-\delta}+\cO(q^{\alpha-\beta}), \label{e:AsymptoticInfinitySolution1}\\
		s_{\infty 2}(q) &= \left(-dq^2\right)^{\frac{1}{1-\delta}}+\frac{a_{4}}{1-\delta}+\cO(q^{\alpha-\beta}), \label{e:AsymptoticInfinitySolution2}\\
		s_{0\pm}(q) &= \varepsilon^{2/\delta}y_{1\pm}^{1/\delta}+o(\varepsilon^{2/\delta}), \label{e:AsympoticZeroSol}
	\end{align}
	where $\beta>\alpha=\frac{2}{1-\delta}>0$, and $y_{1\pm}$ are the solutions to the following quadratic equation
	\begin{equation}\label{e:ZeroSolHigherOrder}
	d\kappa^4y_1^2-( a_{1}d+ a_{4})\kappa^2y_1+a_{1}a_{4}-a_{2}a_{3}=0.
	\end{equation}
	These lead to the claimed results. We refer to Appendix \ref{s:ProofAsymptotic} for a detailed proof.
\end{proof}

\begin{remark}
We emphasise that $s_{0\pm}(q)$ do not converge to the regular spectrum pointwise in $q$ as $\delta \to 0$ as they move out of the principal branch for any fixed $d$ as $\delta \to 0$ (cf.\ Corollary \ref{c:smalldeltanonexist} and Fig.~\ref{f:InstabilityRegion} below). It is the combination of parts $s_{\infty 1}$ and $s_{\infty 2}$, which converges to the regular spectrum locally uniformly in $q$, relating to Theorem \ref{t:ConvergenceModelA}.
\end{remark}

\subsection{Spectral instability for large wavenumbers}\label{s:largewav}

Here we discuss the stability and instability of (pseudo-)spectrum of \eqref{e:FracLinearSystem} for $|q|\gg 1$ in $\Omega_0$. Our aim is to give a complete picture of (pseudo-)spectrum and to understand how (pseudo-)spectrum moves in $\Omega_0$ to instability. The onset of Turing instability, in particular the critical diffusion coefficients $d$ for the subdiffusion model, has been studied in \cite{Nec2007}. We reformulate the instability results scattered in \cite{Nec2007} and combine these with the convergence theorem in Section~\ref{s:Convergence} as well as additional results of this section. This yields Theorem \ref{t:RealPartSpectrumInfinityWavenumber}; recall $d_c$ denotes the critical diffusion coefficient for the onset of the Turing instability with normal diffusion.

\begin{theorem}\label{t:RealPartSpectrumInfinityWavenumber}
	For any $\delta\in(0,1)$, there exists a unique $d_\delta^\infty \in (-a_{4}/a_{1}, d_c)$ and $Q>0$ such that for $d > d_\delta^\infty$ and any $|q|>Q$ there is spectrum $s(q) \in \Omega_0^+$. In particular, $\sup(\Re(\Lambda_\ss)) > \sup(\Re(\Lambda_\reg))$ for $d \in (d_\delta^\infty,d_c)$. Moreover, for any fixed $d_* \in (0, d_c)$, there exists a $\delta_*\in(0,1]$ such that $\sup(\Re(\Lambda_\ss)) < 0$ for any $\delta\in (0,\delta_*)$.
\end{theorem}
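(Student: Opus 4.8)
The statement has three parts. The plan is to treat them in the order: (i) existence and uniqueness of the threshold $d_\delta^\infty$ and the spectrum $s(q)\in\Omega_0^+$ for $|q|$ large when $d>d_\delta^\infty$; (ii) the strict inequality $\sup(\Re(\Lambda_\ss))>\sup(\Re(\Lambda_\reg))$ on the window $(d_\delta^\infty,d_c)$; (iii) strict stability of $\Lambda_\ss$ for $\delta$ small. The key tool throughout is Lemma~\ref{l:realunstablespec} together with the large-wavenumber asymptotics of Lemma~\ref{l:AsymptoticSolution} and the convergence Lemma~\ref{l:ConvergeLocallyUniformly}.

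For part (i), I would use the rescaling $q=\kappa s_r^{-\delta/2}(\kappa)$ from Lemma~\ref{l:realunstablespec}: positive subdiffusion spectrum $s_0(q)\in\Omega_0^+$ for $|q|\gg1$ corresponds to positive regular spectrum $s_r(\kappa)>0$ whose associated wavenumber $q$ becomes large. So the question reduces to: for which $d$ does $D_{\reg}(s_r,\kappa^2)=0$ have a positive root $s_r$ with $\kappa^2/s_r^\delta$ unbounded (equivalently $s_r\to0$ along the branch)? Examining $D_{\reg}$ at $s=0$ gives the curve $(\kappa^2-a_1)(d\kappa^2-a_4)=a_2a_3$, i.e. $dq^4-(a_1d+a_4)q^2+\det\A=0$ in the notation $q=\kappa$; as $\kappa^2\to\infty$ the sign of $D_{\reg}(0,\kappa^2)$ is governed by the leading coefficient $d$ and by whether $\kappa^2>a_1$ (recall $a_1>0$). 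A sign-change of $D_{\reg}(0,\kappa^2)$ for some $\kappa^2>a_1$ with $\partial_s D_{\reg}<0$ there forces a positive root $s_r$ bifurcating off $s=0$, hence large-$q$ positive subdiffusion spectrum. The threshold is exactly the value $d_\delta^\infty$ at which this sign condition is first met; monotonicity of the relevant expression in $d$ yields uniqueness, and a direct computation identifies the boundary cases $d=-a_4/a_1$ (where $\kappa^2=a_1$ becomes the critical wavenumber) and $d=d_c$ (the classical finite-wavenumber Turing point, above which ordinary regular spectrum is already unstable). The containment $d_\delta^\infty\in(-a_4/a_1,d_c)$ then follows by comparing these two algebraic conditions; in fact I expect $d_\delta^\infty$ to be independent of $\delta$ or depend on it only through the requirement that the root lands in $\Omega_0^+$, which it does automatically since $s_r>0$.

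For part (ii): on $(d_\delta^\infty,d_c)$ the regular spectrum is strictly stable, so $\sup(\Re(\Lambda_\reg))<0$, while part (i) supplies $s(q)\in\Omega_0^+$ with $\Re(s(q))>0$ for $|q|$ large, hence $\sup(\Re(\Lambda_\ss))>0>\sup(\Re(\Lambda_\reg))$. (At $d=d_\delta^\infty$ itself one gets equality of the suprema at $0$, so the strict inequality holds on the open interval, consistent with the statement.) For part (iii), fix $d_*\in(0,d_c)$. On any compact set of wavenumbers $|q|\le Q_0$, Lemma~\ref{l:ConvergeLocallyUniformly} / Theorem~\ref{t:ConvergenceModelA} gives $\Lambda_\ss\to\Lambda_\reg$ uniformly, and $\max\Re(\Lambda_\reg)<0$ since $d_*<d_c$; so for $\delta$ small the spectrum restricted to $|q|\le Q_0$ is strictly stable with a uniform negative bound. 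For $|q|>Q_0$, Lemma~\ref{l:AsymptoticSolution} shows the only candidates are $s_{\infty1},s_{\infty2}$ (with $\Re\to-\infty$, hence stable once $Q_0$ is large) and $s_{0\pm}(q)$, and the remark after Lemma~\ref{l:AsymptoticSolution} (and Corollary~\ref{c:smalldeltanonexist}) says that for fixed $d_*$ the $s_{0\pm}$ branch leaves $\Omega_0$ as $\delta\to0$ — i.e. $\arg s_{0\pm}\to\pi/\delta$, which exits $(-\pi+\theta_1,\pi+\theta_1)$ once $\delta<\theta_1/(\pi+\theta_1)$ roughly; one must be a little careful that the error terms in \eqref{e:AsympoticZeroSol} do not pull the argument back inside, but since $s_{0\pm}\to0$ the argument is controlled by the leading term $\varepsilon^{2/\delta}y_{1\pm}^{1/\delta}$. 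Choosing $\delta_*$ to be the minimum of the threshold from the compact-wavenumber estimate and the threshold from Corollary~\ref{c:smalldeltanonexist} completes the argument.

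\textbf{Main obstacle.} The delicate point is part (i): pinning down $d_\delta^\infty$ precisely and proving it is \emph{unique} and strictly between $-a_4/a_1$ and $d_c$, rather than merely exhibiting \emph{some} threshold. This requires analysing the bivariate polynomial $D_{\reg}(0,\kappa^2)=d\kappa^4-(a_1d+a_4)\kappa^2+\det\A$ as a function of $d$ — locating when it acquires a root $\kappa^2>a_1$ with the correct sign of $\partial_s D_{\reg}$ — and cross-referencing with the classical Turing condition that defines $d_c$. Matching the large-$q$ instability mechanism (spectrum emanating from the branch point $s=0$) against the finite-$q$ mechanism (the classical Turing threshold $d_c$) is where the bookkeeping is heaviest, and it is also where I would lean most on the reformulation of the results of \cite{Nec2007}. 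A secondary subtlety in part (iii) is making the two $\delta_*$-thresholds (compact-$q$ convergence vs. $s_{0\pm}$ exiting $\Omega_0$) genuinely uniform in $q$ up to the crossover wavenumber $Q_0$, but that is routine once $Q_0$ is chosen from Lemma~\ref{l:AsymptoticSolution}.
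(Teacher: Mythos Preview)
Your treatment of parts (ii) and (iii) is essentially what the paper does, but your approach to part (i) has a genuine gap that would cause it to fail.

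You propose to locate $d_\delta^\infty$ via Lemma~\ref{l:realunstablespec}: real positive regular spectrum $s_r(\kappa)>0$ rescales to real positive subdiffusion spectrum at $q=\kappa s_r^{-\delta/2}$. But that rescaling only produces spectrum when $s_r>0$ is \emph{real}, and the analysis of $D_{\reg}(0,\kappa^2)=d\kappa^4-(a_1d+a_4)\kappa^2+\det\A$ that you outline is exactly the classical Turing computation --- its threshold is $d_c$, not $d_\delta^\infty$. Your expectation that ``$d_\delta^\infty$ \dots be independent of $\delta$'' is the telltale sign: in fact $d_\delta^\infty$ depends on $\delta$ explicitly and satisfies $d_\delta^\infty<d_c$ with $d_\delta^\infty\to d_c$ as $\delta\to0$.

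The mechanism you are missing is this. For $d\in(-a_4/a_1,d_c)$ the quadratic $P(y_1)=dy_1^2-(a_1d+a_4)y_1+\det\A$ from \eqref{e:ZeroSolHigherOrderFixk} has \emph{complex conjugate} roots $y_{1\pm}=\rho e^{\pm i\theta}$ with $\theta=\arctan(\sqrt{\Delta}/b)\in(0,\pi/2)$. By \eqref{e:AsympoticZeroSol} the large-$q$ spectrum is $s_{0\pm}\sim\varepsilon^{2/\delta}y_{1\pm}^{1/\delta}$, so $s_{0\pm}\in\Omega_0^+$ iff $\theta/\delta<\pi/2$, i.e.\ $\arctan(\sqrt{\Delta}/b)<\pi\delta/2$. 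Squaring out this inequality gives a quadratic condition $H(d)>0$ in $d$ whose coefficients involve $\cos^2(\pi\delta/2)$; its larger root is $d_\delta^\infty$, lies strictly between $-a_4/a_1$ and $d_c$, and is unique by monotonicity of $H$ beyond its vertex. This is the content of the paper's Lemma~\ref{l:RealPartPositive} (Case~2 of \S\ref{s:largewav}), and it is what makes the instability for $d\in(d_\delta^\infty,d_c)$ a Turing--Hopf phenomenon (nonzero imaginary part) rather than the real-spectrum mechanism you invoke, which only kicks in above $d_c$ via Proposition~\ref{p:PositiveRealSpectrumSufficientExplicit}.
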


Since $d_\delta^\infty < d_c$ the subdiffusive transport of model \eqref{e:Henry2000} destabilises the homogeneous steady state \emph{before} normal diffusion does. This was already found in \cite{Henry2002} and may seem counterintuitive as subdiffusion heuristically slows down dynamics~\cite{Metzler2000}.
As mentioned, we add here a more detailed analysis and include the transition to instability. Recall that for the subdiffusion model, positive real parts imply exponential growth, so Theorem~\ref{t:RealPartSpectrumInfinityWavenumber} means  exponential growth of Fourier modes with arbitrarily large wavenumbers. Moreover, the critical diffusion coefficient $d_{\delta}^\infty$ has Turing-Hopf character, since the solutions to linearisation become unstable through oscillatory modes. 

The remainder of this section combined proves in particular Theorem~\ref{t:RealPartSpectrumInfinityWavenumber}.

\medskip
From \eqref{e:AsympoticZeroSol}, we observe that $s_0 \in \Omega_0^+$ ($\in \Omega_0^-$) if $y_1^{1/\delta} \in \Omega_0^+$ ($\in \Omega_0^-$) so we only discuss $y_1^{1/\delta}$. In case $y_1^{1/\delta} \in i\RR$ the sign of $\Re(s_0)$ is determined by higher order terms, which we do not consider. 

We start from \eqref{e:ZeroSolHigherOrder} and take $\kappa=\pm1$, so $q = \pm 1/\varepsilon$ and $|q|\to\infty$ as $\varepsilon\to0$. Then \eqref{e:ZeroSolHigherOrder} becomes
\begin{equation}\label{e:ZeroSolHigherOrderFixk}
	P(y_1):=dy_1^2-( a_{1}d+ a_{4})y_1+a_{1}a_{4}-a_{2}a_{3}=0,
\end{equation}
which is quadratic in $y_1$ and, since $d>0$, the minimum of $P$ is given by
\begin{equation*}
	P_{\min} = [4d(a_{1}a_{4}-a_{2}a_{3})-(a_{1}d+a_{4})^2]/(4d).
\end{equation*}

\medskip
\noindent\textbf{Case 1:} $P_{\min}\leq0$. There are two real solutions to \eqref{e:ZeroSolHigherOrderFixk}. 

\smallskip
\textit{Case 1a:} $a_{1}d+a_{4}>0$, i.e., $d>-a_{4}/a_{1}$. Equation \eqref{e:ZeroSolHigherOrderFixk} has one or two positive solutions, which leads to positive $y_{1\pm}^{1/\delta}$. It holds that
\begin{equation}\label{e:CriticalDiffRatio-finfty}
P_{\min}\leq0\quad \Rightarrow \quad F(d):= a_{1}^2d^2+(4a_{2}a_{3}-2a_{1}a_{4})d+a_{4}^2\geq0,
\end{equation}
whose roots $d_{fr-}$ and $d_{fr+}$ are both positive, but we exclude $d\leq d_{fr-}$ because $d_{fr-}<-a_{4}/a_{1}$ does not satisfy our assumption. Notably, $d_{fr+}=d_c$ is the Turing bifurcation point of system \eqref{e:RegAISystem} and for $d>d_c$ there exists a curve of real and strictly positive $s_0(q)$ as $q\in[\pm q_{\min},\pm\infty)$, cf.\ Proposition \ref{p:PositiveRealSpectrumSufficientExplicit}. For $d=d_c$, \eqref{e:ZeroSolHigherOrderFixk} has a positive double root $y_1$ which corresponds to positive $y_1^{1/\delta}$, thus there exists a $Q>0$ such that $\Re(s_{0}(q))>0$ for all $|q|>Q$. However, we do not know whether $\Im(s_{0}(q))$ is zero or not due to the higher order term.

\smallskip
\textit{Case 1b:} $a_{1}d+a_{4}<0$, i.e., $d<-a_{4}/a_{1}$. Equation \eqref{e:ZeroSolHigherOrderFixk} has one or two negative solutions and in the present case we exclude $d\geq d_{fr+}$. 
Since $\arg(y_{1\pm}) = \pi$, we have $\arg(y_{1\pm}^{1/\delta}) = \pi/\delta$ so that $y_{1\pm}^{1/\delta} \in \Omega_0$ if $\delta \in (\pi/(\pi+\theta_1),1)$ and thus in fact $y_{1\pm}^{1/\delta} \in \Omega_0^-$. In conclusion, if $\delta \in (\pi/(\pi+\theta_1),1)$ and $d\leq d_{fr-}$, then $y_1^{1/\delta} \in \Omega_0^-$. See also Remark \ref{r:CoincideComplexReal}.

\smallskip
\textit{Case 1c:} $a_{1}d+a_{4}=0$, i.e., $d=-a_{4}/a_{1}$. Since $(a_{1}a_{4}-a_{2}a_{3})>0$, the minimum $P_{\min}$ is always positive, which is the next case.

\bigskip
\noindent\textbf{Case 2:} $P_{\min}>0$. In this case, the solutions to \eqref{e:ZeroSolHigherOrderFixk} are complex conjugate. Denote $b:= a_{1}d+ a_{4}$, $\Delta:=4d(a_{1}a_{4}-a_{2}a_{3})-( a_{1}d+ a_{4})^2$; note $P_{\min}>0$ implies $\Delta > 0$.

\medskip
The following lemma provides the explicit formula of critical diffusion coefficient $d_\delta^\infty$, cf.\ the implicit result \cite[eq.~4.25]{Nec2007}.

\begin{lemma}\label{l:RealPartPositive}
	For any $\delta\in(0,1)$, there exists $d_\delta^\infty \in (-a_{4}/a_{1}, d_c)$ with $\lim_{\delta \to 0} d_\delta^\infty = d_c$, and $Q>0$ such that $s_{0\pm}(q) \in \Omega_0^+$ if $d>d_\delta^\infty$ and $|q|>Q$. Here $d_\delta^\infty$ is the larger root of 
	\begin{equation}\label{e:RealPartPositives}
H(d) := a_{1}^2 d^2 + (4(a_{2} a_{3} - a_{1} a_{4})\cos^2(\pi\delta/2) + 2a_{1} a_{4}) d + a_{4}^2 = 0.
	\end{equation}
\end{lemma}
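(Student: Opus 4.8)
The plan is to work in Case 2, where the roots $y_{1\pm}$ of the quadratic \eqref{e:ZeroSolHigherOrderFixk} are complex conjugate, and to determine exactly when $y_{1\pm}^{1/\delta}$ lands in $\Omega_0^+$, i.e., when $\arg(y_{1\pm}^{1/\delta}) = \arg(y_{1\pm})/\delta \in (-\pi/2,\pi/2)$. First I would write the complex-conjugate roots explicitly: with $b = a_1 d + a_4$ and $\Delta = 4d(a_1 a_4 - a_2 a_3) - b^2 > 0$, we have $y_{1\pm} = (b \pm i\sqrt{\Delta})/(2d)$, so $|y_{1\pm}|^2 = (b^2+\Delta)/(4d^2) = (a_1 a_4 - a_2 a_3)/d$ and $\arg(y_{1+}) = \arccos\!\big(b/(2d|y_{1+}|)\big) = \arccos\!\big(b/(2\sqrt{d(a_1 a_4 - a_2 a_3)})\big)$, taking the root in the upper half-plane. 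Since $a_1 a_4 - a_2 a_3 = \det(\A) > 0$, this argument is well-defined in $(0,\pi)$.

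The next step is the geometric condition. We want $s_{0+}$ (say the one coming from $y_{1+}$) to be in $\Omega_0^+$, meaning $\arg(y_{1+})/\delta < \pi/2$, i.e., $\arg(y_{1+}) < \pi\delta/2$, i.e., $\cos(\arg(y_{1+})) > \cos(\pi\delta/2)$ (using that $\arg(y_{1+})\in(0,\pi)$ and cosine is decreasing there). Substituting the formula for the argument, this reads $b/(2\sqrt{d\det\A}) > \cos(\pi\delta/2)$, i.e., $a_1 d + a_4 > 2\cos(\pi\delta/2)\sqrt{d\det\A}$. Squaring both sides (legitimate once we note $b>0$ is needed — indeed $b\le 0$ would force $\arg(y_{1+})\ge\pi/2>\pi\delta/2$, giving $\Re(y_{1+}^{1/\delta})<0$, and $b>0$ combined with $P_{\min}>0$ is exactly the regime $d\in(-a_4/a_1,\ \cdot)$) yields $(a_1 d + a_4)^2 > 4d\det\A\,\cos^2(\pi\delta/2)$, which rearranges to
\[
a_1^2 d^2 + \big(2a_1 a_4 - 4(a_1a_4 - a_2 a_3)\cos^2(\pi\delta/2)\big) d + a_4^2 > 0,
\]
i.e., $H(d) > 0$ with $H$ as in \eqref{e:RealPartPositives}. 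So the condition $d > d_\delta^\infty$, where $d_\delta^\infty$ is the larger root of $H$, is precisely the threshold; I would check $H$ has two positive real roots (product $a_4^2/a_1^2 > 0$, and the linear coefficient is negative for $\delta$ not too large — in fact at $\delta=0$, $H$ reduces to $F$ from \eqref{e:CriticalDiffRatio-finfty}, so its larger root is $d_c$, giving $\lim_{\delta\to 0} d_\delta^\infty = d_c$ by continuity of the roots). One also checks $H(-a_4/a_1) = (2a_2 a_3 - 2a_1 a_4 + 2a_1 a_4)(-a_4/a_1)\cos^2(\pi\delta/2)\cdot(\text{sign})$ — more carefully, at $d = -a_4/a_1$ we are in Case 1c where $b=0$, so $\arg(y_{1+}) = \pi/2 > \pi\delta/2$, hence $H(-a_4/a_1) < 0$, placing $d_\delta^\infty > -a_4/a_1$; and $H(d_c) \le 0$ since $F(d_c)=0$ and $\cos^2(\pi\delta/2) < 1$ makes $H(d_c) \le F(d_c) = 0$ (the coefficient of the $\cos^2$ term, $-4(a_1a_4 - a_2a_3) = -4\det\A < 0$, so decreasing $\cos^2$ from $1$ increases $H$ — wait, this needs care), so $d_\delta^\infty < d_c$. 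I would pin down these monotonicity signs carefully.

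Finally, I would assemble: pick $Q$ large enough that the $o(\varepsilon^{2/\delta})$ error in \eqref{e:AsympoticZeroSol} does not change the sign of $\Re(s_{0\pm}(q))$ for $|q| > Q$ (possible since the leading term $\varepsilon^{2/\delta} y_{1+}^{1/\delta}$ has nonzero real part once $d \ne d_\delta^\infty$, being bounded away from the imaginary axis), and conclude $s_{0\pm}(q) \in \Omega_0^+$ for $d > d_\delta^\infty$, $|q| > Q$. The main obstacle I anticipate is not the algebra of $H$ but rather the careful bookkeeping of which of the two conjugate roots $y_{1\pm}$ we track, the validity of squaring (sign of $b$), and ensuring the asymptotic remainder in \eqref{e:AsympoticZeroSol} is genuinely subdominant uniformly for $|q|>Q$ — i.e., that $s_{0\pm}$ stays in the open right half-plane and does not drift onto $i\RR$ where higher-order terms would decide. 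A secondary subtlety is confirming $d_\delta^\infty$ is a genuine root of $H$ (not the smaller one) and lies strictly between $-a_4/a_1$ and $d_c$, which I would get from evaluating $H$ at those two endpoints and using the intermediate value theorem together with $H(d)\to+\infty$.
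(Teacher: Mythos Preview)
Your approach is essentially the paper's: both reduce $s_{0\pm}\in\Omega_0^+$ to the angle condition $\arg(y_{1+})<\pi\delta/2$ with $b>0$, rewrite this as $H(d)>0$, and then locate the larger root; the paper parametrises the angle via $\arctan(\sqrt{\Delta}/b)$ where you use $\arccos\big(b/(2\sqrt{d\det\A})\big)$, which is the same computation. On the sign you flagged: since $H(d)-F(d)=4\det(\A)\sin^2(\pi\delta/2)\,d>0$ for $d>0$, one has $H(d_c)>F(d_c)=0$ (not $\le 0$); combined with $H(-a_4/a_1)<0$ and the convexity of $H$, this places the larger root $d_\delta^\infty$ in $(-a_4/a_1,d_c)$ as required.
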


\begin{proof}
	First, consider $b>0$, i.e., $d>-a_{4}/a_{1}$. The solutions to \eqref{e:ZeroSolHigherOrderFixk} are $y_{1\pm}=(b\pm i\sqrt{\Delta})/(2d)=:\rho e^{\pm i\theta}$, where $\pm\theta:=\arg (y_{1\pm})$ and $\rho > 0$. 
	Clearly, $y_{1\pm}^{1/\delta} \in \Omega_0^+$ if $\arg(y_{1\pm}^{1/\delta}) = \pm\theta/\delta \in (-\pi/2,\pi/2)$. Since $b>0$, we have $\theta=\arg (y_{1+})=\arctan(\sqrt{\Delta}/b)$. Thus $\theta/\delta \in (-\pi/2,\pi/2) \Rightarrow \arctan(\sqrt{\Delta}/b) \in (-\pi\delta/2,\pi\delta/2)$. Since $\arctan(\sqrt{\Delta} / b)>0$, we obtain $\arctan(\sqrt{\Delta}/b) \in (0,\pi\delta/2)$ which leads to 
	\begin{align*}
		4d(a_{1}a_{4}-a_{2}a_{3})-( a_{1}d+ a_{4})^2<( a_{1}d+ a_{4})^2\tan^2(\pi\delta/2),
	\end{align*}
	or equivalently $H(d)>0$. The roots $d_-$ and $d_\delta^\infty$ are both real valued, but we exclude $d<d_-<d_\delta^\infty$ because $d_-<-a_{4}/a_{1}$ contradicts the assumption.
	
	\smallskip
	For $b \leq 0$, i.e., $d \leq -a_{4}/a_{1}$ we have $\arg(y_{1+}) \geq \pi/2$ and thus $\arg(y_{1+}^{1/\delta}) \geq \pi/(2\delta) \notin (-\pi/2,\pi/2)$ as well as $\arg(y_{1-}^{1/\delta}) \leq -\pi/(2\delta) \notin (-\pi/2,\pi/2)$. 
		
	\smallskip
	The fact that $d_\delta^\infty < d_c$ for $\delta\in(0,1)$ follows by straightforward comparison of the solutions to \eqref{e:CriticalDiffRatio-finfty} and \eqref{e:RealPartPositives}.
\end{proof}

\medskip
The following makes the dependence of the pseudo-spectrum on the choice of branch cut explicit. From Fig.~\ref{f:InstabilityRegion} we can see that the range of existence is decreasing when $\theta_1 \to 0$, i.e., $\BC_0^{\theta_1}$ moves to the negative real line. In particular, for the canonical choice $\theta_1 = 0$ the pseudo-spectrum $s_{\infty 1},s_{\infty 2}$ is invisible. This explains the absence in \cite{Henry2002,Henry2005,Nec2007,Nec2008}.
\begin{figure}[t]
	\centering
	\subfloat[$\theta_1=\pi/2$]{\includegraphics[width=0.4\linewidth]{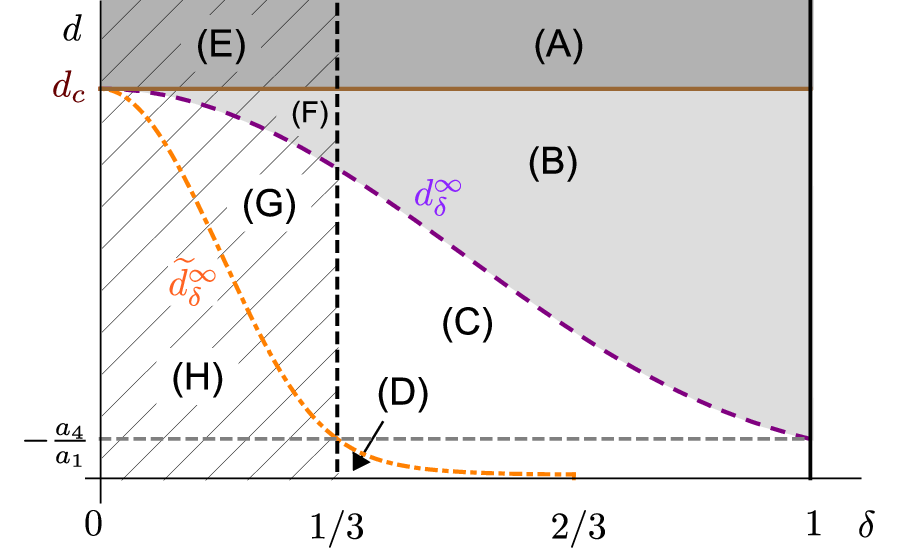}}
	\hfil
	\subfloat[$\theta_1 = 0$]{\includegraphics[width=0.4\linewidth]{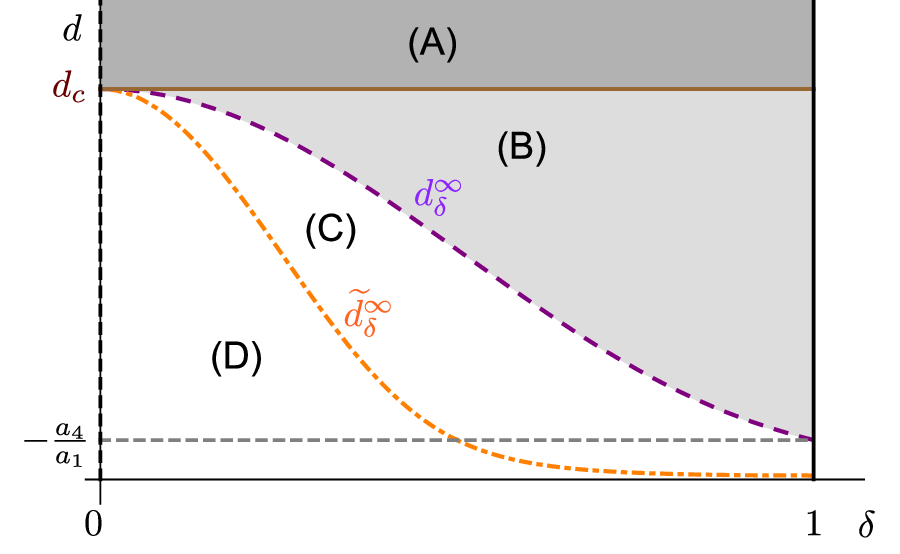}}
	\caption{Existence and stability of large wavenumber (pseudo-)spectrum. Brown solid horizontal lines: Turing instability threshold for normal diffusion in \eqref{e:RegAISystem}; purple dashed curves $d = d_\delta^\infty$: Turing-Hopf threshold for \eqref{e:ModelA}, i.e., $s_{0\pm} >0$ in regions (A), (E), whereas $s_{0\pm} \in \Omega_0^+$ with nonzero imaginary parts in regions (B), (F). Orange dashed dotted curves $d = \widetilde{d}_\delta^\infty$: existence of $s_{0+}\in \Omega_0^-$ in regions (C), (G) and $s_{0\pm}\notin \Omega_0$ in regions (D), (H). Vertical dashed line $\delta = \theta_1/(\pi+\theta_1)$ where $\delta=1/3$ in (a) and $\delta=0$ in (b): existence of $s_{\infty 1},s_{\infty 2} \in \Omega_0^-$ in regions (E)--(H) and $s_{\infty 1},s_{\infty 2} \notin \Omega_0$ in regions (A)--(D).}
	\label{f:InstabilityRegion}
\end{figure}

\begin{proposition}\label{p:ExistencePseudo}
	For any $\delta \in (0,1)$, there exist $Q>0$ and $\widetilde{d}_{\delta}^\infty \in [0,d_{\delta}^\infty)$ with $\lim_{\delta \to 0} \widetilde{d}_\delta^\infty = d_c$ such that for any $|q|>Q$ we have $s_{0+}(q) \in \Omega_0^-$ if $d \in (\widetilde{d}_{\delta}^\infty,d_{\delta}^\infty)$, and for any $\delta \in (0,\frac{\pi}{\pi+\theta_1})$, $s_{0\pm}(q) \notin \Omega_0$ if $d < \widetilde{d}_{\delta}^\infty$. More specifically, $\widetilde{d}_{\delta}^\infty$ is given by
	\begin{equation*}
		\widetilde{d}_{\delta}^\infty =
		\begin{cases}
		\widetilde{d}_{\delta+}^\infty, & \delta \in (0,\frac{\pi}{2(\pi+\theta_1)})\\
		-\frac{a_{4}}{a_{1}}, & \delta = \frac{\pi}{2(\pi+\theta_1)}\\
		\widetilde{d}_{\delta-}^\infty, & \delta \in (\frac{\pi}{2(\pi+\theta_1)},\frac{\pi}{\pi+\theta_1})\\
		\end{cases}
	\end{equation*}
	where $\widetilde{d}_{\delta+}^\infty$ and $\widetilde{d}_{\delta-}^\infty$ are the larger and smaller roots of
	\begin{equation}\label{e:ExistenceEqn}
\widetilde{H}(d):=a_{1}^2 d^2+(4(a_{2}a_{3} -a_{1}a_{4})\cos^2((\pi+\theta_1)\delta) + 2a_{1}a_{4})d+a_{4}^2 = 0.
	\end{equation} 
\end{proposition}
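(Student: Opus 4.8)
\textbf{Proof proposal for Proposition \ref{p:ExistencePseudo}.}

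The plan is to continue the analysis of Case~2 from Lemma~\ref{l:RealPartPositive}, where the roots $y_{1\pm}=\rho e^{\pm i\theta}$ of \eqref{e:ZeroSolHigherOrderFixk} are complex conjugate, and to track precisely when $y_{1\pm}^{1/\delta}=\rho^{1/\delta}e^{\pm i\theta/\delta}$ lands in $\Omega_0^-$ versus leaves $\Omega_0$ entirely. Recall from \eqref{e:AsympoticZeroSol} that $s_{0\pm}(q)$ has the same argument location as $y_{1\pm}^{1/\delta}$, so it suffices to locate $\arg(y_{1\pm}^{1/\delta})=\pm\theta/\delta$ relative to the sector boundaries of $\Omega_0=\{\arg\in(-\pi+\theta_1,\pi+\theta_1)\}$. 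First I would split according to the sign of $b=a_1 d + a_4$. When $b>0$ ($d>-a_4/a_1$) we have $\theta=\arctan(\sqrt{\Delta}/b)\in(0,\pi/2)$; Lemma~\ref{l:RealPartPositive} already showed $s_{0\pm}\in\Omega_0^+$ iff $\theta/\delta<\pi/2$, i.e. $d>d_\delta^\infty$. For $d$ just below $d_\delta^\infty$, $\theta/\delta$ exceeds $\pi/2$ but — provided $\delta$ is small enough that the sector $\Omega_0$ still reaches past $\pi/2$ — the point $y_{1+}^{1/\delta}$ sits in $\Omega_0^-$ (and $y_{1-}^{1/\delta}$ by conjugation; note $y_{1-}^{1/\delta}\in\Omega_0$ is the binding constraint since the sector is not symmetric, hence the ``$s_{0+}$'' in the statement). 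The transition out of $\Omega_0$ occurs exactly when $\theta/\delta$ hits the upper sector boundary $\pi+\theta_1$, i.e. $\theta=(\pi+\theta_1)\delta$, equivalently $\tan((\pi+\theta_1)\delta)=\sqrt{\Delta}/b$; squaring and substituting $\Delta=4d(a_1a_4-a_2a_3)-b^2$ and $b=a_1d+a_4$ yields exactly $\widetilde H(d)=0$ as in \eqref{e:ExistenceEqn}. When $b<0$ ($d<-a_4/a_1$) we have $\theta=\pi-\arctan(\sqrt{\Delta}/|b|)\in(\pi/2,\pi)$, and the same boundary condition $\theta=(\pi+\theta_1)\delta$ again reduces to $\widetilde H(d)=0$ (using $\cos^2((\pi+\theta_1)\delta)$ which is insensitive to the quadrant ambiguity introduced by squaring). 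This explains the three-branch formula: for small $\delta$ the relevant root of $\widetilde H$ is the larger one $\widetilde d_{\delta+}^\infty$ (lying above $-a_4/a_1$), for $\delta$ past $\pi/(2(\pi+\theta_1))$ it is the smaller root $\widetilde d_{\delta-}^\infty$ (lying below $-a_4/a_1$), and at $\delta=\pi/(2(\pi+\theta_1))$ the argument $\theta$ equals $\pi/2$, forcing $b=0$, hence $d=-a_4/a_1$.

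Next I would verify the quantitative claims $\widetilde d_\delta^\infty\in[0,d_\delta^\infty)$ and $\lim_{\delta\to 0}\widetilde d_\delta^\infty=d_c$. The ordering $\widetilde d_\delta^\infty < d_\delta^\infty$ follows by comparing \eqref{e:ExistenceEqn} with \eqref{e:RealPartPositives}: both are quadratics in $d$ with the same leading coefficient $a_1^2>0$ and the same constant term $a_4^2>0$, differing only in the linear coefficient, which is strictly more negative for $H$ than for $\widetilde H$ because $\cos^2(\pi\delta/2)>\cos^2((\pi+\theta_1)\delta)$ for $\delta\in(0,1)$, $\theta_1\in(0,\pi/2)$ (the angle $(\pi+\theta_1)\delta$ is strictly larger than $\pi\delta/2$ and, in the relevant range, still in $(0,\pi/2)$ where $\cos^2$ decreases); this pushes the larger root of $\widetilde H$ strictly below that of $H$. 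Nonnegativity $\widetilde d_\delta^\infty\ge 0$ is immediate from the product of roots being $a_4^2/a_1^2>0$ and their sum being positive in the regime considered. The limit as $\delta\to 0$: then $(\pi+\theta_1)\delta\to 0$ so $\cos^2((\pi+\theta_1)\delta)\to 1$ and $\widetilde H(d)\to a_1^2d^2+(4a_2a_3-2a_1a_4)d+a_4^2=F(d)$, whose larger root is $d_{fr+}=d_c$ by \eqref{e:CriticalDiffRatio-finfty}; continuity of roots in the coefficients gives $\widetilde d_\delta^\infty\to d_c$.

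Finally I would assemble the existence/non-existence dichotomy. For $d\in(\widetilde d_\delta^\infty, d_\delta^\infty)$ the argument $\theta$ satisfies $\pi\delta/2 < \theta < (\pi+\theta_1)\delta$ (the left inequality from $d<d_\delta^\infty$ via Lemma~\ref{l:RealPartPositive}, the right from $d>\widetilde d_\delta^\infty$), so $\theta/\delta\in(\pi/2,\pi+\theta_1)$ and hence $y_{1+}^{1/\delta}\in\Omega_0^-$, giving $s_{0+}(q)\in\Omega_0^-$ for all $|q|>Q$ with $Q$ from \eqref{e:AsympoticZeroSol}. For $d<\widetilde d_\delta^\infty$ together with the hypothesis $\delta\in(0,\pi/(\pi+\theta_1))$, the angle $\theta/\delta$ has crossed the boundary $\pi+\theta_1$, so $y_{1+}^{1/\delta}\notin\Omega_0$ (and correspondingly $y_{1-}^{1/\delta}\notin\Omega_0$ via the lower boundary $-\pi+\theta_1$, using $-\theta/\delta$); hence $s_{0\pm}(q)\notin\Omega_0$. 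I expect the main obstacle to be the careful bookkeeping of which conjugate root ($y_{1+}$ vs.\ $y_{1-}$) first exits the \emph{asymmetric} sector $\Omega_0$ and the attendant sign/quadrant issues when squaring $\tan(\theta)=\sqrt{\Delta}/b$ to obtain $\widetilde H$ — in particular checking that $(\pi+\theta_1)\delta$ stays in a range where $\cos^2$ is monotone, so that the comparison with \eqref{e:RealPartPositives} is valid — rather than the algebra of the quadratics, which is routine.
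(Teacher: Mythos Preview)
Your proposal is correct and follows essentially the same route as the paper: the paper's Appendix~D proves Proposition~\ref{p:ExistencePseudo} through four lemmas that split according to the sign of $b=a_1d+a_4$ and track $\arg(y_{1+}^{1/\delta})=\theta/\delta$ against the sector boundaries $\pi/2$ and $\pi+\theta_1$, deriving $\widetilde H(d)=0$ from $\theta=(\pi+\theta_1)\delta$ exactly as you do, and handling the asymmetry (only $s_{0+}$, not $s_{0-}$, is guaranteed in $\Omega_0^-$) in the same way. One small caveat: your comparison $\cos^2(\pi\delta/2)>\cos^2((\pi+\theta_1)\delta)$ via monotonicity on $(0,\pi/2)$ only directly handles the first branch $\delta<\pi/(2(\pi+\theta_1))$, where $\widetilde d_\delta^\infty=\widetilde d_{\delta+}^\infty$; on the third branch $\widetilde d_\delta^\infty=\widetilde d_{\delta-}^\infty$ is the \emph{smaller} root of $\widetilde H$, and the inequality $\widetilde d_{\delta-}^\infty<d_\delta^\infty$ follows instead from $\widetilde d_{\delta-}^\infty<-a_4/a_1<d_\delta^\infty$ (which the paper also uses implicitly).
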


\begin{proof}
	The somewhat technical proof is given in Appendix \ref{s:DiscussionLemmas}.
\end{proof}

\begin{corollary}\label{c:smalldeltanonexist}
	For any fixed $d \in (0, d_c)$, there exist $\delta_\textrm{e}\in(0,1]$ and $Q>0$ such that $s_{0\pm}(q) \notin \Omega_0$ for any $\delta\in (0,\delta_\textrm{e})$ and any $|q|>Q$.
\end{corollary}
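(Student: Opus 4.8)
\textbf{Proof proposal for Corollary~\ref{c:smalldeltanonexist}.}

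The plan is to read off the claim directly from Proposition~\ref{p:ExistencePseudo} together with the limiting behaviour of the threshold curves as $\delta\to0$. Recall that Proposition~\ref{p:ExistencePseudo} asserts: for any $\delta\in(0,\tfrac{\pi}{\pi+\theta_1})$ and suitable $Q>0$, one has $s_{0\pm}(q)\notin\Omega_0$ for all $|q|>Q$ whenever $d<\widetilde d_\delta^\infty$, and moreover $\lim_{\delta\to0}\widetilde d_\delta^\infty = d_c$. So the only work is to convert this pointwise-in-$\delta$ statement into the uniform-in-$d$ statement of the corollary: given a \emph{fixed} $d\in(0,d_c)$, I want a single $\delta_{\mathrm e}>0$ such that $d<\widetilde d_\delta^\infty$ for every $\delta\in(0,\delta_{\mathrm e})$.

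First I would fix $d\in(0,d_c)$ and set $\eta := d_c - d > 0$. Since $\widetilde d_\delta^\infty\to d_c$ as $\delta\to0$ (Proposition~\ref{p:ExistencePseudo}), there exists $\delta_0>0$ such that $|\widetilde d_\delta^\infty - d_c| < \eta$ for all $\delta\in(0,\delta_0)$, hence $\widetilde d_\delta^\infty > d_c - \eta = d$ for all such $\delta$. Next I would shrink if necessary: pick $\delta_{\mathrm e} := \min\{\delta_0,\ \tfrac{\pi}{\pi+\theta_1}\}\in(0,1]$ so that both the convergence estimate and the hypothesis $\delta\in(0,\tfrac{\pi}{\pi+\theta_1})$ of Proposition~\ref{p:ExistencePseudo} hold simultaneously. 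For each $\delta\in(0,\delta_{\mathrm e})$ we then have $d < \widetilde d_\delta^\infty$, so the proposition supplies a $Q(\delta)>0$ with $s_{0\pm}(q)\notin\Omega_0$ for all $|q|>Q(\delta)$.

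The one genuine subtlety is the uniformity of the threshold $Q$ in $\delta$: the corollary states a single $Q$ works for all $\delta\in(0,\delta_{\mathrm e})$, whereas Proposition~\ref{p:ExistencePseudo} a priori gives $Q=Q(\delta)$. To handle this I would inspect the asymptotic expansion \eqref{e:AsympoticZeroSol}, namely $s_{0\pm}(q)=\varepsilon^{2/\delta}y_{1\pm}^{1/\delta}+o(\varepsilon^{2/\delta})$ with $q=1/\varepsilon$: the condition that forces $s_{0\pm}\notin\Omega_0$ is the argument condition $\arg(y_{1\pm}^{1/\delta})\notin((-\pi+\theta_1)\tfrac{\delta}{?},\dots)$ — more precisely, as in the proof of Lemma~\ref{l:RealPartPositive} and Proposition~\ref{p:ExistencePseudo}, it is the sign/size of $\arg(y_{1\pm})$ relative to $(\pi\pm\theta_1)\delta$ that decides membership, and this is a condition on the \emph{leading-order} coefficient $y_{1\pm}$, which for $\kappa=\pm1$ solves the $\delta$-independent quadratic \eqref{e:ZeroSolHigherOrderFixk}. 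Since $d<d_c$ is fixed, $y_{1\pm}$ and hence $\arg(y_{1\pm})$ are fixed numbers, bounded away from the critical wedge boundaries uniformly for $\delta\in(0,\delta_{\mathrm e})$ (after possibly shrinking $\delta_{\mathrm e}$ once more, using that the wedge boundaries $(\pi\pm\theta_1)\delta$ vary continuously and monotonically in $\delta$). The remainder term in \eqref{e:AsympoticZeroSol} is $o(\varepsilon^{2/\delta})$ with a constant that can be bounded uniformly for $\delta$ in the compact interval $[\,\delta_1,\delta_{\mathrm e}\,]$ for any $\delta_1>0$; for $\delta\to0$ one argues separately that $\varepsilon^{2/\delta}\to0$ superpolynomially so the leading wedge-membership is undisturbed for all large $|q|$ regardless of how small $\delta$ is. Packaging these estimates gives one $Q>0$ valid for all $\delta\in(0,\delta_{\mathrm e})$, completing the proof. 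The main obstacle is precisely this uniform control of the error term in \eqref{e:AsympoticZeroSol} as $\delta\to0$, where the exponent $2/\delta$ blows up; everything else is a direct citation of Proposition~\ref{p:ExistencePseudo} plus the elementary limit $\widetilde d_\delta^\infty\to d_c$.
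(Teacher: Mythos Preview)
Your approach is essentially the same as the paper's: the corollary is stated without proof, immediately after Proposition~\ref{p:ExistencePseudo}, as a direct consequence of that proposition together with $\lim_{\delta\to0}\widetilde d_\delta^\infty=d_c$. Your first two paragraphs reproduce exactly this argument.

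Your third paragraph goes beyond the paper by flagging the uniformity-in-$\delta$ of $Q$. This is a fair reading of the quantifier order in the statement, and the paper does not address it explicitly (Proposition~\ref{p:ExistencePseudo} gives $Q=Q(\delta)$). Your sketch for handling it is on the right track---the leading-order membership condition is governed by $\arg(y_{1\pm})$, which is $\delta$-independent via \eqref{e:ZeroSolHigherOrderFixk}, compared against a wedge of half-angle $(\pi+\theta_1)\delta$ that collapses to zero---but the treatment of the remainder $o(\varepsilon^{2/\delta})$ as $\delta\to0$ (``argues separately \ldots\ superpolynomially'') is not made precise. For the purposes the paper actually needs (the proof of Theorem~\ref{t:ConvergenceModelA}, where one works in a compact $K\subset\subset\Omega_0$ away from the origin), a $\delta$-dependent $Q$ already suffices, so this uniformity issue is more of a refinement than a gap.
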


\medskip
Concerning the approximations $s_{\infty 1}$ and $s_{\infty 2}$, we have the following lemma.
\begin{lemma}\label{l:InfinitySolutionExistence}
	For any $\delta \in (0,1)$ there exists a $Q>0$ such that for any $|q|>Q$ we have $s_{\infty 1}(q),\, s_{\infty 2}(q)\in\Omega_0^-$ if $\delta \in (0,\frac{\theta_1}{\pi+\theta_1})$ whereas $s_{\infty 1}(q),\, s_{\infty 2}(q)\notin\Omega_0$ if $\delta \in [\frac{\theta_1}{\pi+\theta_1},1)$.
\end{lemma}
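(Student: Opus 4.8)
The plan is to analyse the leading-order behaviour of $s_{\infty 1}(q)$ and $s_{\infty 2}(q)$ from \eqref{e:AsymptoticInfinitySolution1}--\eqref{e:AsymptoticInfinitySolution2} and to check when their arguments fall inside the sector $\Omega_0 = \{s : \arg(s) \in (-\pi+\theta_1, \pi+\theta_1)\}$. Since $\delta \in (0,1)$, the correction terms $a_1/(1-\delta)$, $a_4/(1-\delta)$ and $\cO(q^{\alpha-\beta})$ are of strictly lower order in $|q|$ than the leading terms $(-q^2)^{1/(1-\delta)}$ and $(-dq^2)^{1/(1-\delta)}$, both of which blow up like $|q|^{2/(1-\delta)}$. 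Hence, for $|q|$ large enough, the argument of $s_{\infty j}(q)$ is dominated by (and converges to) the argument of the leading term, so it suffices to locate $\arg\big((-q^2)^{1/(1-\delta)}\big)$ and $\arg\big((-dq^2)^{1/(1-\delta)}\big)$ relative to the branch defining $\Omega_0$.

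First I would record that, with the chosen branch, $\arg(-q^2) = \pi$ for all real $q \neq 0$ (here $-q^2$ is a negative real number, and on the principal branch $\Omega_0$ we represent it with argument $\pi \in (-\pi+\theta_1, \pi+\theta_1)$ since $\theta_1 \in (0,\pi/2)$). Likewise $\arg(-dq^2) = \pi$ because $d>0$. Raising to the power $1/(1-\delta)$ multiplies the argument, giving $\arg\big((-q^2)^{1/(1-\delta)}\big) = \pi/(1-\delta)$ and similarly for the second root. Then $s_{\infty j}(q) \in \Omega_0$ for $|q|$ large precisely when $\pi/(1-\delta) \in (-\pi+\theta_1, \pi+\theta_1)$; since $\delta \in (0,1)$ gives $\pi/(1-\delta) > \pi > 0$, the lower bound is automatic and the condition reduces to $\pi/(1-\delta) < \pi+\theta_1$, i.e. $1-\delta > \pi/(\pi+\theta_1)$, i.e. $\delta < \theta_1/(\pi+\theta_1)$. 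In that case $\pi/(1-\delta) \in (\pi, \pi+\theta_1) \subset (\pi/2, \pi+\theta_1)$, so $\Re(s_{\infty j}(q)) < 0$ and hence $s_{\infty j}(q) \in \Omega_0^-$, matching the claim. Conversely, if $\delta \in [\theta_1/(\pi+\theta_1), 1)$ then $\pi/(1-\delta) \geq \pi+\theta_1$, so the leading-order argument lies on or outside the boundary ray of $\Omega_0$; by the asymptotic dominance of the leading term, for $|q|$ large the full $s_{\infty j}(q)$ also lies outside $\Omega_0$ (the lower-order real corrections cannot pull the argument back inside an open sector when the leading argument is at or beyond the boundary), so $s_{\infty j}(q) \notin \Omega_0$.

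To make the ``for $|q|$ large'' uniform I would extract a single threshold $Q>0$ working for both $j=1$ and $j=2$: choose $Q$ so large that for $|q|>Q$ the modulus of the leading term exceeds twice the modulus of the sum of the remaining terms in \eqref{e:AsymptoticInfinitySolution1}--\eqref{e:AsymptoticInfinitySolution2}, which by an elementary estimate forces $|\arg(s_{\infty j}(q)) - \pi/(1-\delta)| < \varepsilon$ for any preassigned $\varepsilon>0$; picking $\varepsilon$ smaller than the distance from $\pi/(1-\delta)$ to the boundary ray $\pm(\pi+\theta_1)$ (when $\delta < \theta_1/(\pi+\theta_1)$) closes the inclusion case, and an analogous argument with the open sector handles the exclusion case. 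The value of $Q$ can moreover be taken to subsume the $Q$ from Lemma~\ref{l:AsymptoticSolution}.

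The main obstacle is the boundary case $\delta = \theta_1/(\pi+\theta_1)$, where $\pi/(1-\delta) = \pi+\theta_1$ sits exactly on the branch cut $\BC_0^{\theta_1}$: here the leading-order argument is borderline and one must verify that the subdominant correction terms do not rotate $s_{\infty j}(q)$ into $\Omega_0$. Since $\BC_0^{\theta_1}$ was chosen precisely so that the dispersion relation has no roots on it, the honest statement is that $s_{\infty j}$ is simply not a valid solution in $\Omega_0$ there — the approximate root produced by the rescaling in Appendix~\ref{s:ProofAsymptotic} lands on (or just off, on the excluded side of) the cut and does not correspond to genuine (pseudo-)spectrum. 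I would therefore phrase the exclusion as: for $\delta \in [\theta_1/(\pi+\theta_1),1)$ the leading argument is $\geq \pi+\theta_1$, hence $s_{\infty j}(q) \notin \Omega_0$ for $|q| > Q$, using that $\Omega_0$ is the open sector so its closure's boundary ray is already excluded, and that for $\delta$ strictly above the threshold the strict inequality together with asymptotic dominance gives the result cleanly. The remaining routine work is just the quantitative $\arg$-perturbation estimate sketched above, which I would relegate to the same appendix that handles Proposition~\ref{p:ExistencePseudo}.
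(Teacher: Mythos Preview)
Your proposal is correct and follows essentially the same approach as the paper: compute $\arg\big((-q^2)^{1/(1-\delta)}\big)=\pi/(1-\delta)$, compare with the sector $(-\pi+\theta_1,\pi+\theta_1)$, and use that the leading term dominates for large $|q|$. The paper's proof is in fact terser---it simply asserts that the argument is determined by the leading term and checks the inequality---whereas you spell out the $\arg$-perturbation estimate and discuss the boundary case $\delta=\theta_1/(\pi+\theta_1)$ more carefully than the paper does.
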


\begin{proof}
	Recall the approximations \eqref{e:AsymptoticInfinitySolution1}, \eqref{e:AsymptoticInfinitySolution2} for $|q|\gg1$ in the proof of Lemma \ref{l:AsymptoticSolution}. When $|q|\to\infty$, the arguments of $s_{\infty 1}$ and $s_{\infty 2}$ are determined by $\left(-q^2\right)^{\frac{1}{1-\delta}}$ and $\left(-dq^2\right)^{\frac{1}{1-\delta}}$, respectively, and are given by $\arg (s_{\infty 1}) = \arg (s_{\infty 2}) = \frac{\pi}{1-\delta}$. If $\frac{\pi}{1-\delta} \in (-\pi+\theta_1,\pi+\theta_1) \Rightarrow \delta < \frac{\theta_1}{\pi+\theta_1}$, then $s_{\infty 1},s_{\infty 2}\in\Omega_0$. Moreover, $\frac{\pi}{1-\delta} > \pi/2$, so $s_{\infty 1},\,s_{\infty 2} \in \Omega_0^-$.
\end{proof}

\medskip
Theorem~\ref{t:RealPartSpectrumInfinityWavenumber} now follows from combining Theorem \ref{t:ConvergenceModelA}, Proposition \ref{p:PositiveRealSpectrumSufficientExplicit}, Lemma \ref{l:RealPartPositive}, Proposition \ref{p:ExistencePseudo} and Lemma \ref{l:InfinitySolutionExistence}.

\subsection{Numerical computations of (pseudo-)spectra}\label{s:NumericalModel1}

Here we present some numerical computations of (pseudo-)spectrum for finite wavenumber. We choose $\theta_1 = \pi/2$ to give a most complete picture of the pseudo-spectrum. With $s=z^{m}$ and $\delta=\ell/m\in(0,1)$, $\ell,m\in\mathbb{Z}_+$ we get the dispersion relation
\begin{equation}\label{e:FracDRz}
D_\ss(z^m,q^2)=\left(z^{m}+z^\ell q^2- a_{1}\right)\left(z^{m}+z^\ell dq^2- a_{4}\right)- a_{2}a_{3}=0.
\end{equation}
in the $z$-plane. The principal branch $\Omega_0 = \left\{s\in\CC\setminus\{0\} : \arg (s)\in\left(-\frac{\pi}{2},\frac{3\pi}{2}\right)\right\}$ of $s$ corresponds to the branch in $z$-plane given by $\Sigma_0^m := \left\{z\in\CC\setminus\{0\} : \arg (z)\in\left(-\frac{\pi}{2m},\frac{3\pi}{2m}\right)\right\}$. We set $a_{1}=1/2$, $a_{2}=-3/16$, $a_{3}=8$, $a_{4}=-1$ so the regular Turing threshold is $d_c \approx 19.798$.

\medskip
Fig.~\ref{f:SpectrumComplexPlane} illustrates Theorem \ref{t:ConvergenceModelA}: the subdiffusion (pseudo-)spectra and regular spectra in the complex plane are plotted. We observe that when $\delta$ decreases, the subdiffusion (pseudo-)spectrum approaches the regular spectrum in $\Omega_0$.
\begin{figure}[t]
	\centering
	\subfloat[$\delta=1/6$]{\includegraphics[width=0.3\textwidth]{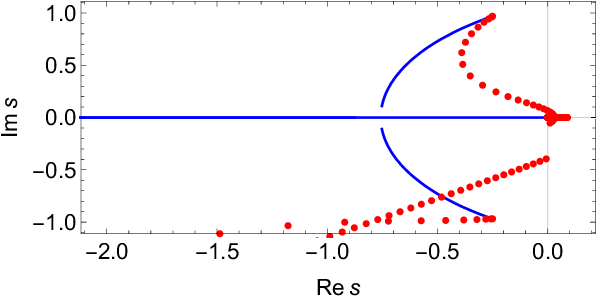}}
	\hfil
	\subfloat[$\delta=1/20$]{\includegraphics[width=0.3\textwidth]{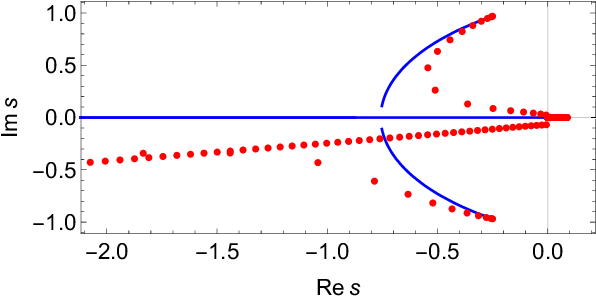}}
	\hfil
	\subfloat[$\delta=1/300$]{\includegraphics[width=0.3\textwidth]{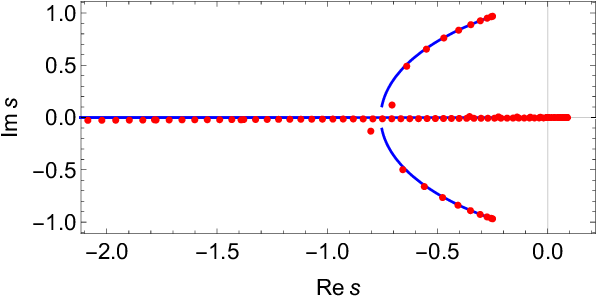}}
	\caption{Comparison of numerically computed spectra of \eqref{e:FracLinearSystem} beyond the regular Turing instability for diffusion coefficient $d=30$ and wavenumbers $q = 0.02n,\,n=0,1,\dots,150$. Blue solid lines: regular spectra; red dotted lines: subdiffusion (pseudo-)spectra.}
	\label{f:SpectrumComplexPlane}
\end{figure}

\medskip
Fig.~\ref{f:realpartspectrum} illustrates Lemma \ref{p:PositiveRealSpectrumSufficientExplicit}: when the regular spectrum has positive part, unstable subdiffusion spectrum along the scaled curve appears from large wavenumbers and moves towards smaller wavenumbers as $d$ increases. Note that maximum real parts of regular spectrum and scaled curve are the same. For decreasing real parts the wavenumber of the scaled curve tends to infinity.
\begin{figure}[t]
	\centering
	\subfloat[$d=20 > d_c$]{\includegraphics[width=0.3\linewidth]{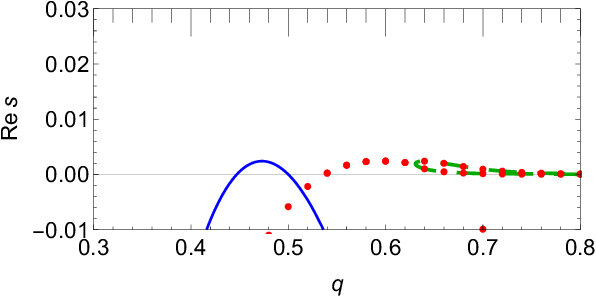}}
	\hfil
	\subfloat[$d=21 > d_c$]{\includegraphics[width=0.3\linewidth]{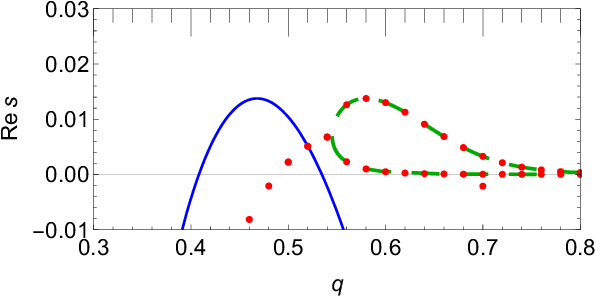}}
	\hfil
	\subfloat[$d=22 > d_c$]{\includegraphics[width=0.3\linewidth]{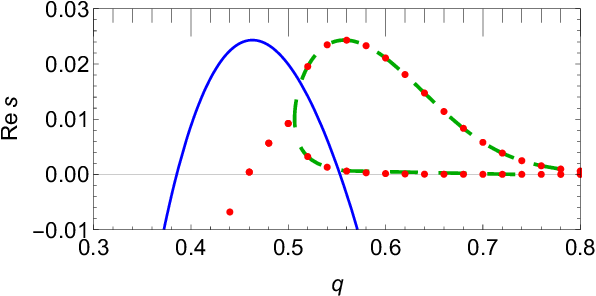}}
	\caption{Comparison of real parts of spectra of versus wavenumber beyond the regular Turing instability in \eqref{e:FracLinearSystem} for anomalous exponent $\delta=1/10$ and $q = 0.02n,\,n=0,1,\dots,150$. Blue solid lines: regular spectra; red dotted lines: subdiffusion (pseudo-)spectra; green dashed lines: real and positive curves scaled from the positive part of regular spectra.}
	\label{f:realpartspectrum}
\end{figure}

\medskip
In Fig.~\ref{f:realpartspectrumzoomin} we exhibit the behaviour of (pseudo-)spectrum as $d$ changes. In Fig.~\ref{f:FracSpecPB-complexzoomin-m10d15}, $d<\widetilde d_\delta^\infty\approx16.5$ so the real part of the regular spectrum and the more `unstable' subdiffusion pseudo-spectrum are both negative. As $d$ increases, the pseudo-spectrum moves towards imaginary axis and $\sup(\Re(\Lambda_\ss))=0$ for $\widetilde d_\delta^\infty < d < d_\delta^\infty\approx19.4$, cf.\ Fig.~\ref{f:FracSpecPB-complexzoomin-m10d18}. For $d>d_\delta^\infty$, the Turing-Hopf threshold, the subdiffusion spectrum is unstable, whereas the regular spectrum is stable as $d<d_c$, the regular Turing threshold, cf.\ Fig.~\ref{f:FracSpecPB-complexzoomin-m10d19p6}. Finally, when $d$ is larger than $d_c$, both spectra are unstable and additional purely real subdiffusion spectrum emerges as the scaled curve of regular spectrum, cf.\ Fig.~\ref{f:FracSpecPB-complexzoomin-m10d20}. These variations coincide with moving through regions (H), (G), (F), (E) in Fig.~\ref{f:InstabilityRegion}, respectively.
\begin{figure}[t]
	\centering
	\subfloat[$d=15 < \widetilde d_\delta^\infty$]{\includegraphics[width=0.35\linewidth]{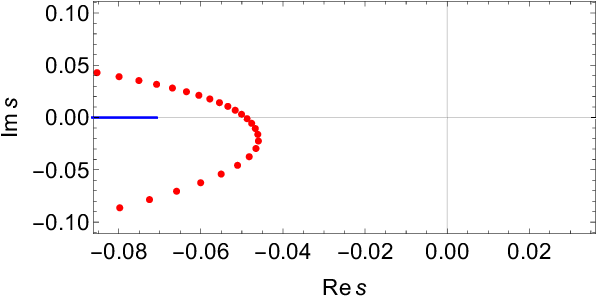}\label{f:FracSpecPB-complexzoomin-m10d15}}
	\hfil
	\subfloat[$\widetilde d_\delta^\infty<d=18 < d_\delta^\infty$]{\includegraphics[width=0.35\linewidth]{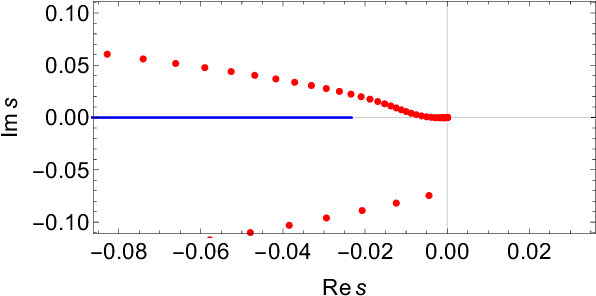}\label{f:FracSpecPB-complexzoomin-m10d18}}
	\hfil
	\subfloat[$d_\delta^\infty < d = 19.6 < d_c$]{\includegraphics[width=0.35\linewidth]{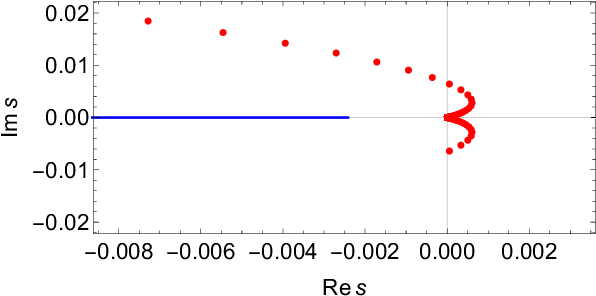}\label{f:FracSpecPB-complexzoomin-m10d19p6}}
	\hfil
	\subfloat[$d=20 > d_c$]{\includegraphics[width=0.35\linewidth]{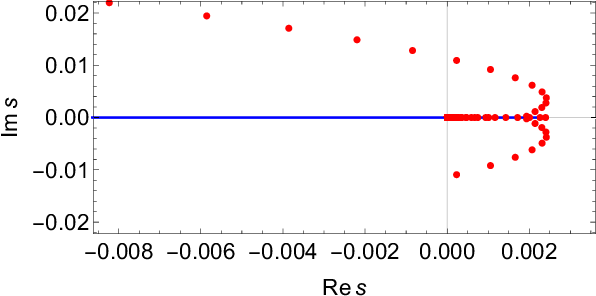}\label{f:FracSpecPB-complexzoomin-m10d20}}
	\caption{Comparison of spectra of \eqref{e:FracLinearSystem} in the principal branch $\Omega_0$ for anomalous exponent $\delta=1/10$ and wavenumber $q = 0.01n,\,n=0,1,\dots,300$. Blue solid lines: regular spectra; red dotted lines: subdiffusion (pseudo-)spectra.}
	\label{f:realpartspectrumzoomin}
\end{figure}

\medskip
Finally, in Fig.~\ref{f:comparespectrum} we compare the subdiffusion (pseudo-)spectrum in $\Omega_0$ and in $\Sigma_0^m$ in order to illustrate how pseudo-spectrum moves across the branch cut. For fixed $\delta$ and increasing $d$, the pseudo-spectrum in the $z$-plane moves into $\Sigma_0^m$, giving rise to the part of pseudo-spectrum which tends to zero. For $d > d_c$, the spectra in the $z$-plane move along the real line, which leads to the parts of the spectrum that tends to zero in $\Omega_0$. For fixed $d$ and decreasing $\delta$, the region $\Sigma_0^m$ becomes narrower, which removes certain pseudo-spectrum.
\begin{figure}[t]
	\centering
	\subfloat[$\delta=1/10$, $d=15<d_\delta^\infty$]{\includegraphics[width=0.35\linewidth]{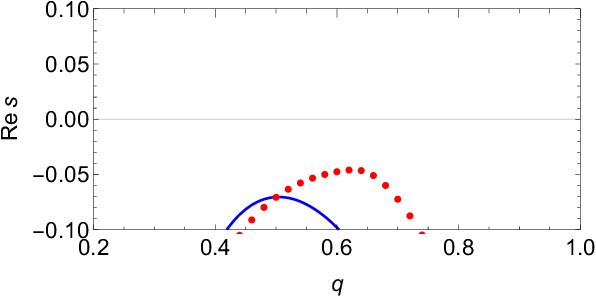}\label{f:FracSpecPB-comparerealpart-m10d15}}
	\hfil
	\subfloat[$\delta=1/10$, $d=15<d_\delta^\infty$]{\includegraphics[width=0.35\linewidth]{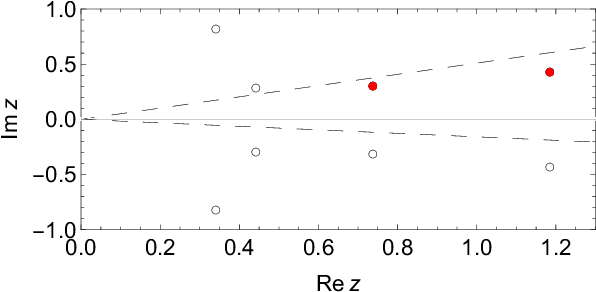}\label{f:FracSpecPB-comparecomplexplane-m10d15}}
	\hfil
	\subfloat[$\delta=1/10$, $d=18<d_\delta^\infty$]{\includegraphics[width=0.35\linewidth]{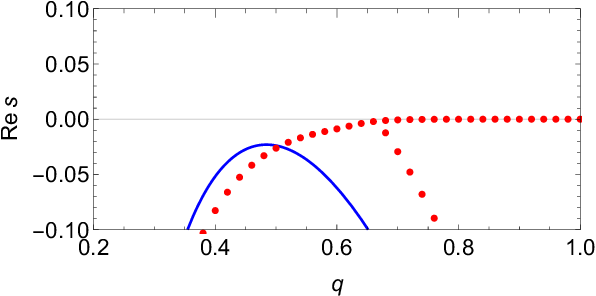}\label{f:FracSpecPB-comparerealpart-m10d18}}
	\hfil
	\subfloat[$\delta=1/10$, $d=18<d_\delta^\infty$]{\includegraphics[width=0.35\linewidth]{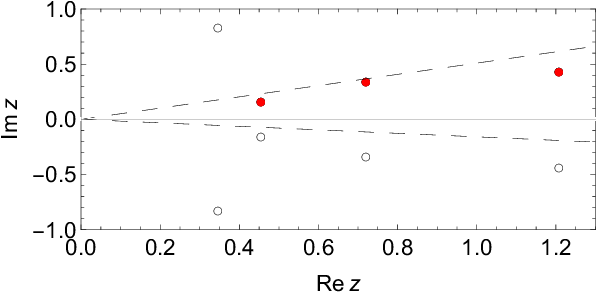}\label{f:FracSpecPB-comparecomplexplane-m10d18}}
	\hfil
	\subfloat[$\delta=1/20$, $d=18<d_\delta^\infty$]{\includegraphics[width=0.35\linewidth]{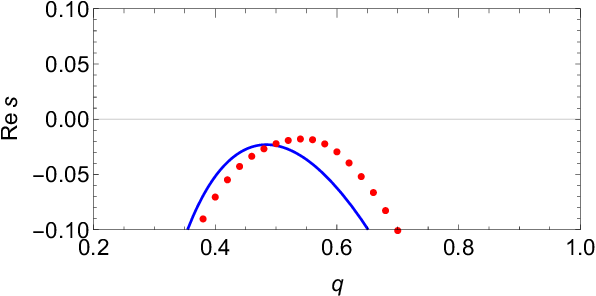}\label{f:FracSpecPB-comparerealpart-m20d18}}
	\hfil
	\subfloat[$\delta=1/20$, $d=18<d_\delta^\infty$]{\includegraphics[width=0.35\linewidth]{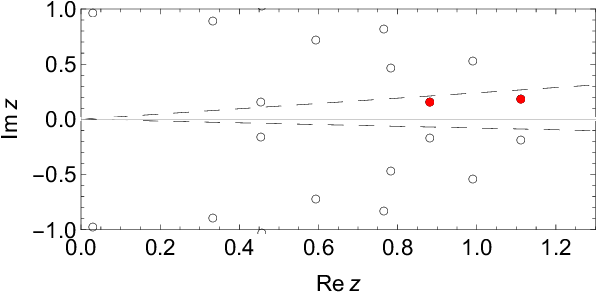}\label{f:FracSpecPB-comparecomplexplane-m20d18}}
	\hfil
	\subfloat[$\delta=1/20$, $d=22>d_c$]{\includegraphics[width=0.35\linewidth]{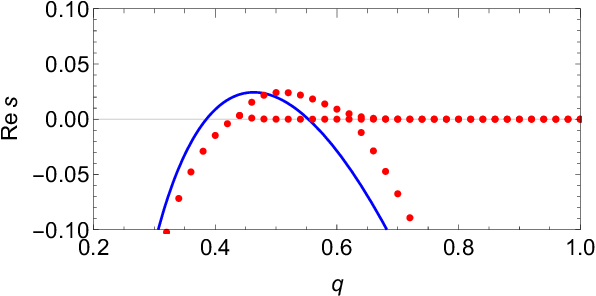}\label{f:FracSpecPB-comparerealpart-m20d22}}
	\hfil
	\subfloat[$\delta=1/20$, $d=22>d_c$]{\includegraphics[width=0.35\linewidth]{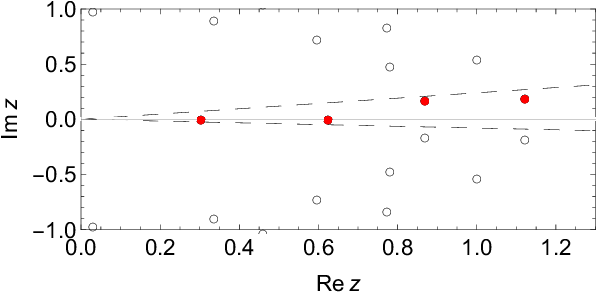}\label{f:FracSpecPB-comparecomplexplane-m20d22}}
	\caption{Model \eqref{e:FracLinearSystem}. Left column: the real part of spectra versus wavenumber; blue solid lines: regular spectrum; red dotted lines: subdiffusion (pseudo-)spectrum. Right column: (pseudo-)spectrum for $q=0.7$ in $z$-plane; hollow circles: roots of \eqref{e:FracDRz}; red dots: roots of \eqref{e:FracDRz} in $\Sigma_0^m$ with $\theta_1 = \pi/2$; dashed lines: borders of $\Sigma_0^m$.}
	\label{f:comparespectrum}
\end{figure}

\section{Subdiffusion with creation and annihilation}\label{s:Model2}

In this section we discuss the model \eqref{e:Langlands2008} with two components, namely
\begin{equation}\label{e:ModelB}
\partial_t \u = \D e^{\A t} \fD_{0,t}^{1-\gamma} \left( e^{-\A t} \partial_x^2 \u \right) + \A \u ,\quad \u\in\RR^2,
\end{equation}
which was derived from a process that only removes particles that have jumped to a location during the dynamics. Indeed, it was numerically shown in \cite{Langlands2008} that the Green's function is non-negative. Similar to the previous section we refine and augment some of the analysis done in the literature, in particular regarding the transition to Turing instability.

\subsection{Scalar case}\label{s:scalarmodel}

We recall the scalar case \eqref{e:Henry2006} and apply the Fourier-Laplace transform to obtain the dispersion relation
\begin{equation}\label{e:ModelB-DRscalar}
d_\ca(s,q^2) := (s-a)^{1-\gamma}\left((s-a)^\gamma+dq^2\right) = 0, \quad s \in \Omega_a,
\end{equation}
where $\Omega_a := \{s\in \CC\setminus\{a\} : \arg(s-a)\in (-\pi+\theta_1, \pi+\theta_1),\, \theta_1 \in (0,\pi/2) \}$ is the extended domain from $\Re(s)>a$ of the Laplace transform, cf.\ Remark \ref{r:ExtendReason}. We denote  $\Omega_a^+ := \{s\in \CC\setminus\{a\} : \arg(s-a)\in (-\pi/2, \pi/2) \}$ and $\Omega_a^{0-} := \Omega_a \setminus \Omega_a^+$.

Notably, $s=a$ is a constant solution to $d_{\ca}(s,q^2) = 0$, but not in $\Omega_a$.

\begin{definition}
	We call the set of roots $\lambda_\ca^+ := \{s\in\Omega_a^+: d_\ca(s,q^2) = 0\ \text{for a}\ q\in\RR \}$ \emph{(subdiffusion) spectrum} of the linear operator $\cL_\ca^\scalar :=de^{at}\fD_{0,t}^{1-\gamma}(e^{-at}\partial_x^2\, \cdot) + a$,
	and the set of roots $\lambda_\ca^{0-} := \{s\in\Omega_a^{0-}: d_\ca(s,q^2) = 0\ \text{for a}\ q\in\RR \}$ \emph{(subdiffusion) pseudo-spectrum} of $\cL_\ca^\scalar$.
\end{definition}

\medskip
We formulate the simple explicit solutions of \eqref{e:ModelB-DRscalar} as a lemma for reference and illustrate it numerically in Fig.~\ref{f:ModelBScalar}.

\begin{lemma}
	For any $\gamma \in (\frac{\pi}{\pi+\theta_1},1)$ and $|q|>0$, the solution to \eqref{e:ModelB-DRscalar} is $s_*(q) = (dq^2)^{1/\gamma} e^{i\pi/\gamma} + a \in \Omega_a$. In particular, $\lim_{|q|\to 0}\Re(s_*(q)) = a$ and $\lim_{|q|\to\infty}\Re(s_*(q)) = -\infty$; if $\gamma \in (0,\frac{\pi}{\pi+\theta_1}]$, then $s_*(q)\notin\Omega_a$ for each $q$; moreover, $s_*(0)\notin\Omega_a$.
\end{lemma}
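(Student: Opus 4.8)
The plan is to solve the dispersion relation $d_\ca(s,q^2) = (s-a)^{1-\gamma}\big((s-a)^\gamma + dq^2\big) = 0$ directly. First I would observe that for $s\in\Omega_a$ we have $s\neq a$, so the factor $(s-a)^{1-\gamma}$ is nonzero, and the equation reduces to $(s-a)^\gamma = -dq^2$ for $|q|>0$. Writing $w:=s-a$ with $\arg(w)\in(-\pi+\theta_1,\pi+\theta_1)$, I take the principal-branch $\gamma$-th power on this branch; since $-dq^2$ is a negative real, its representation compatible with the chosen branch is $dq^2 e^{i\pi}$ (here the argument $\pi$ lies in the admissible range because $\theta_1\in(0,\pi/2)$, so $\pi<\pi+\theta_1$). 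Solving $w^\gamma = dq^2 e^{i\pi}$ gives the candidate $w = (dq^2)^{1/\gamma}e^{i\pi/\gamma}$, hence $s_*(q) = (dq^2)^{1/\gamma}e^{i\pi/\gamma} + a$.

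Next I would check that this candidate actually lies in $\Omega_a$, which is where the branch-cut bookkeeping becomes the crux. We need $\arg(w) = \pi/\gamma \in (-\pi+\theta_1,\pi+\theta_1)$. Since $\gamma<1$ we have $\pi/\gamma>\pi$, so the lower bound is automatic, and the upper bound $\pi/\gamma < \pi+\theta_1$ is equivalent to $\gamma > \pi/(\pi+\theta_1)$, exactly the stated hypothesis. Conversely, for $\gamma\in(0,\frac{\pi}{\pi+\theta_1}]$ one has $\pi/\gamma \geq \pi+\theta_1$, so $s_*(q)\notin\Omega_a$; and one should also confirm there is no other admissible $\gamma$-th root — the other roots differ by $e^{\pm 2\pi i/\gamma}$, whose arguments $\pi/\gamma \pm 2\pi/\gamma$ are even further from the interval $(-\pi+\theta_1,\pi+\theta_1)$ of width $2\pi$, since $2\pi/\gamma>2\pi$, hence outside $\Omega_a$. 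This establishes uniqueness of the solution in $\Omega_a$ for $|q|>0$ in the regime $\gamma\in(\frac{\pi}{\pi+\theta_1},1)$.

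Then the asymptotic statements follow immediately from the explicit formula. As $|q|\to 0$, $(dq^2)^{1/\gamma}\to 0$, so $s_*(q)\to a$ and in particular $\Re(s_*(q))\to a$; as $|q|\to\infty$, $(dq^2)^{1/\gamma}\to\infty$ while $\cos(\pi/\gamma)<0$ (because $\pi/\gamma\in(\pi,\pi+\theta_1)\subset(\pi,3\pi/2)$), hence $\Re(s_*(q)) = (dq^2)^{1/\gamma}\cos(\pi/\gamma)+a \to -\infty$. Finally, for $q=0$ the equation $d_\ca(s,0)=(s-a)=0$ forces $s=a$, which is excluded from $\Omega_a$ by definition, so $s_*(0)\notin\Omega_a$; one records this as the $\lim_{|q|\to0}$ behaviour being only a limit, not an attained value.

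The main obstacle, such as it is, is purely the branch-cut accounting: making sure that the representation $-dq^2 = dq^2 e^{i\pi}$ (rather than $e^{-i\pi}$ or another shift) is the one consistent with the chosen branch $\arg(w)\in(-\pi+\theta_1,\pi+\theta_1)$, and then tracking precisely which inequality on $\gamma$ keeps $\pi/\gamma$ inside this interval while all sibling roots fall outside. No hard analysis is involved; the delicacy is entirely in not mislabelling an argument by a multiple of $2\pi$.
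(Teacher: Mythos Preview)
Your proof is correct and follows essentially the same approach as the paper's: solve $(s-a)^\gamma=-dq^2$ explicitly, compute $\arg(s-a)=\pi/\gamma$, and translate the membership condition $\pi/\gamma\in(-\pi+\theta_1,\pi+\theta_1)$ into the threshold $\gamma>\pi/(\pi+\theta_1)$. Your version is in fact slightly more thorough than the paper's, which omits the explicit check that no other $\gamma$-th root of $-dq^2$ can fall in $\Omega_a$; your observation that the sibling roots differ in argument by $2\pi/\gamma>2\pi$, hence cannot fit in an interval of width $2\pi$, cleanly supplies that uniqueness.
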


\begin{proof}
	The solution is $s(q)= (-dq^2)^{1/\gamma}+a = (dq^2)^{1/\gamma} e^{i\pi/\gamma} + a$. Since $\arg(s-a) = \pi/\gamma$ we have $s \in \Omega_a$ if $\pi/\gamma \in (-\pi+\theta_1,\pi+\theta_1) \Rightarrow \gamma \in (\frac{\pi}{\pi+\theta_1},1)$. Since $\pi/\gamma > \pi$, we obtain that $\Re(s-a) < 0$ for $|q|>0$, and $\lim_{|q|\to\infty} \Re(s-a) = -\infty$. Moreover, $s(q) \notin \Omega_a$ if $\gamma \notin (\frac{\pi}{\pi+\theta_1},1)$, and $s(0) = a \notin\Omega_a$.
\end{proof}
\begin{figure}[t]
	\centering
	\subfloat[$a = -1/2$]{\includegraphics[width=0.3\linewidth]{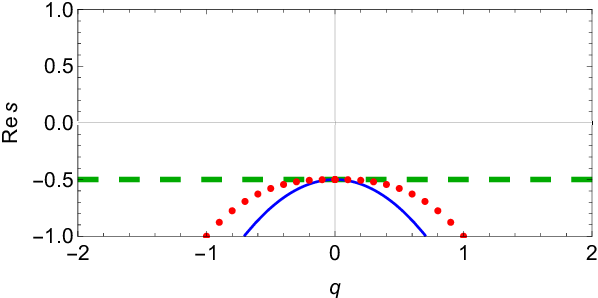}}
	\hfil
	\subfloat[$a = 0$]{\includegraphics[width=0.3\linewidth]{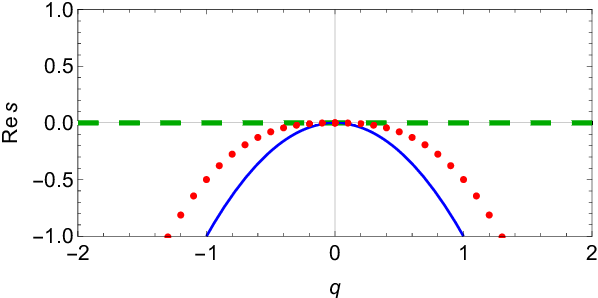}}
	\hfil
	\subfloat[$a= 1/2$]{\includegraphics[width=0.3\linewidth]{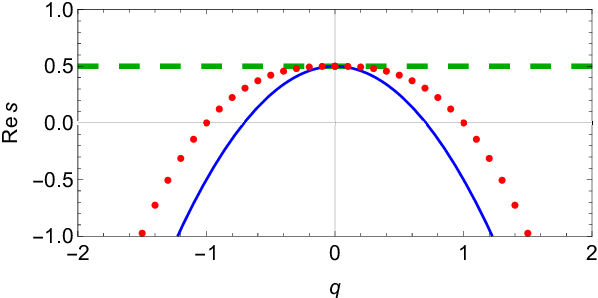}}
	\caption{Model \eqref{e:Henry2006}: comparison of real parts of spectra versus wavenumber with the branch cut angle $\theta_1 = \pi/2$ and $\gamma=3/4>2/3$. Blue solid lines: regular spectra; red dotted lines: subdiffusion (pseudo-)spectra; green dashed lines: $s(q) = a\notin\Omega_a$. Taking $\gamma=1/4<2/3$ the 
results are the same except the pseudo-spectrum is empty.}
	\label{f:ModelBScalar}
\end{figure}

\medskip
Notably, unlike the (pseudo-)spectrum of \eqref{e:ModelA-scalar}, the (pseudo-)spectrum of \eqref{e:Henry2006} is strictly stable for $|q|\gg 1$ and $\gamma\in(\frac{\pi}{\pi+\theta_1},1)$. 

\medskip
The solution to \eqref{e:Henry2006} with initial condition $u(x,0)=\delta(x)$ has been discussed in~\cite[Section V.B]{Henry2006} by establishing the relation with \eqref{e:Subdiffusion}, which is given by
\[
u(x,t) = \Phi(x,t) e^{at}, \quad t>0.
\]
Recall that $\Phi(x,t)$ is the Green's function to \eqref{e:Subdiffusion} given by \eqref{e:GreensFunction}. Notably, it follows that the non-trivial solutions to \eqref{e:Henry2006} is exponentially growing (decaying) for $a>0$ ($a<0$) and $t\gg 1$.

\medskip
The solution to the system \eqref{e:Langlands2008} with $\D = d\cdot\I$ has been discussed as well~\cite[Section III]{Langlands2008}, which is given by
\[
\u(x,t) = \Phi(x,t)e^{\A t}\u_0,
\]
with initial condition $\u(x,0) = \delta(x)\u_0$. The eigenvalues of $\A$ determine the decay as they are both negative, so the Turing instability cannot happen. For general diffusion matrix this approach fails and other instabilities occur as discussed next.

\subsection{Spectral analysis}\label{s:turingbifurcation}

We turn to the system \eqref{e:ModelB} with general diagonal diffusion matrix $\D=\diag(1,d)$, cf.\ \cite[eq.~3.5]{Nepomnyashchy2016}, where the (pseudo-)spectrum determines the temporal features of solutions analogous to Theorem~\ref{t:ILT-ss}. Moreover, we discuss the convergence of subdiffusion (pseudo-)spectrum when the subdiffusive exponent $\gamma\to1$ as well as approximate (pseudo-)spectra to detect instabilities. 

\medskip
In order to simplify the problem, as in~\cite{Langlands2008} we assume that $\A$ is a diagonalisable matrix, so that 
\begin{equation*}
	\P =
	\begin{pmatrix}
	a_{2}			&	a_{2}\\
	\mu_1-a_{1}	&	\mu_2-a_{1}
	\end{pmatrix},\quad
	\P ^{-1}=
	\begin{pmatrix}
	\frac{a_{1}-\mu_2}{a_{2}(\mu_1-\mu_2)}	&	\frac{1}{\mu_1-\mu_2}\\
	-\frac{a_{1}-\mu_1}{a_{2}(\mu_1-\mu_2)}	&	-\frac{1}{\mu_1-\mu_2}
	\end{pmatrix}
\end{equation*}
gives $\bar{\A}:=\P ^{-1}\A \P =\mathrm{diag}(\mu_1,\mu_2)$ (cf.\ \cite[eq.~27]{Nec2007a}), where $\mu_1$ and $\mu_2$ are the eigenvalues of $\A$. The conditions for Turing instability become $\tr(\A )=\mu_1+\mu_2<0$ and $\det(\A )=\mu_1\mu_2>0$. Without loss of generality we assume throughout that 
\[
\Re(\mu_1)\geq \Re(\mu_2).
\]
Changing coordinates $\u =\P \w $ turns \eqref{e:ModelB} into
\begin{equation}\label{e:DiagonalSystem}
	\partial_t\w   = \bar{\D }e^{\bar{\A}t}\fD_{0,t}^{1-\gamma}\left(e^{-\bar{\A}t}\partial_x^2 \w \right)+\bar{\A}\w ,
\end{equation}
where $\bar{\D }:=\P ^{-1}\D \P = \begin{pmatrix}
d_{1} & d_{2}\\
d_{3} & d_{4}
\end{pmatrix} $. Transforming \eqref{e:DiagonalSystem} into Fourier space yields
\begin{align}\label{e:ModelBFourier}
	\partial_t\hat\w = -q^2\bar{\D }e^{\bar{\A}t}\fD_{0,t}^{1-\gamma}\left(e^{-\bar{\A}t} \hat\w \right)+\bar{\A}\hat\w.
\end{align}
Laplace transform then gives for $\Re(s-\mu_1),\,\Re(s-\mu_2)>0$ the dispersion relation
\begin{equation}\begin{split}
&  D_\ca(s,q^2):=\det\left(s\I -\bar{\A}+\left(s\I -\bar{\A}\right)^{1-\gamma}\bar{\D }q^2\right)\\
 & = \det\left(\left(s\I -\bar{\A}\right)^{1-\gamma}\right)\det\left(\left(s\I -\bar{\A}\right)^{\gamma}+\bar{\D }q^2\right)\\
  &= \left(s-\mu_1\right)^{1-\gamma}\left(s-\mu_2\right)^{1-\gamma} \left[\left(\left(s-\mu_1\right)^\gamma+d_{1}q^2\right)\left(\left(s-\mu_2\right)^\gamma+d_{4}q^2\right)-d_{2}d_{3}q^4\right] = 0. \label{e:FracDRModelB}
	\end{split}\end{equation}
As in Section~\ref{s:Model1}, we extend the domain to $\Omega_\ca\subset \CC$ as follows. Clearly, the finite branch points of \eqref{e:FracDRModelB} are $\mu_1, \mu_2$. Taking $s$ along a contour around $\mu_1$ and $\mu_2$, the change in argument of $s$ is $\gamma(2\pi+2\pi)=4\pi\gamma$ and for $\gamma\neq1/2$ we have $e^{4\pi\gamma i}\neq1$ so $s=\infty$ is a branch point of $(s-\mu_1)^{\gamma}(s-\mu_2)^{\gamma}$. As we are most interested in the vicinity of $\gamma=1$, we take $\gamma \in (1/2,1)$. 

\smallskip
\noindent The conditions $\Re(\mu_1), \Re(\mu_2)<0$ for a Turing instability yield two cases: 
\begin{itemize}
\item[\textbf{cc}:] $\mu_1$, $\mu_2$ are complex conjugate with negative real parts;
\item[\textbf{nr}:] $\mu_1$, $\mu_2$ are negative real and $\mu_1>\mu_2$ (since the Turing conditions imply $a_2a_3<0$). 
\end{itemize}	

In order to keep non-integer powers of positive reals on the positive real axis, we choose the principal branch as follows:
	
\begin{itemize}
\item[\textbf{cc}:] $\Omega_\ca^\cc := \{s \in \CC\setminus\{\mu_1,\mu_2\} : \arg(s-\mu_1) \in (-\pi-\theta_1,\pi-\theta_1),\ \arg(s-\mu_2) \in (-\pi+\theta_2,\pi+\theta_2),\ \theta_1,\,\theta_2 \in (0,\pi/2),\ \Im(\mu_1) > 0 > \Im(\mu_2) \}$ (cf.\ Fig.~\ref{f:Rouche-cc}), i.e., branch cuts $\BC_{\mu_1}^{-\theta_1}$ and $\BC_{\mu_2}^{\theta_2}$.

\item[\textbf{nr}:]  $\Omega_\ca^\nr := \{s \in \CC\setminus\{\mu_1,\mu_2\} : \arg(s-\mu_1) \in (-\pi+\theta_1,\pi+\theta_1),\ \arg(s-\mu_2) \in (-\pi+\theta_2,\pi+\theta_2),\ \theta_1,\,\theta_2 \in (0,\pi/2),\ \mu_2 < \mu_1 < 0 \}$ (cf.\ Fig.~\ref{f:Rouche-nr}), i.e., branch cuts $\BC_{\mu_1}^{\theta_1}$ and $\BC_{\mu_2}^{\theta_2}$.
\end{itemize}
We further select $\theta_1,\theta_2$ such that for each $q$ the solutions of $D_\ca(s,q^2)=0$ do not lie on the branch cuts.
We write $\Omega_\ca$ for $\Omega_\ca^\cc$ or $\Omega_\ca^\nr$ whenever the case is not relevant, and denote: 
\begin{gather*}
\Omega_\ca^+ := \{s\in\Omega_\ca:\Re(s)>\Re(\mu_1)\},\quad \Omega_\ca^- := \{s\in\Omega_\ca:\Re(s)<\Re(\mu_1)\},\\ \Omega_\ca^{0+} := \Omega_\ca \setminus \Omega_\ca^-,\quad \Omega_\ca^{0-} := \Omega_\ca \setminus \Omega_\ca^+.
\end{gather*}

\begin{definition}
	We call the set of roots $\Lambda_\ca^+ := \{s\in\Omega_\ca^+: D_\ca(s,q^2)=0\ \text{for a}\ q\in\RR \}$ \emph{(subdiffusion) spectrum} of the linear operator $\cL_\ca :=\D e^{\A t} \fD_{0,t}^{1-\gamma} (e^{-\A t} \partial_x^2\,\cdot) + \A$, and the set of roots $\Lambda_\ca^{0-} := \{s\in\Omega_\ca^{0-}: D_\ca(s,q^2) = 0\ \text{for a}\ q\in\RR \}$ \emph{(subdiffusion) pseudo-spectrum} of $\cL_\ca$.
\end{definition}

We denote the union as 
$\Lambda_\ca := \Lambda_\ca^+ \cup \Lambda_\ca^{0-}$.

\medskip
In preparation, consider the non-trivial factor of $D_\ca$, 
\begin{align*}
D_{\ca2}(s,q^2) &:= \left(\left(s - \mu_1\right)^\gamma + d_{1} q^2\right) \left(\left(s - \mu_2\right)^\gamma + d_{4} q^2\right) - d_{2} d_{3} q^4 = 0, \quad s \in \Omega_\ca.
\end{align*}
We note that if $1-\gamma=n/m\in\mathbb{Q}$ then it can be cast as a polynomial with respect to the two variables $z_j=(s-\mu_j)^{1/m}$, $j=1,2$, of degree $m-n$ in each variable, given by
\[
D_{\ca2}(z_1,z_2,q^2) = (z_1^{m-n} + d_1q^2)(z_2^{m-n} + d_4q^2) - d_2d_3q^4.
\] 
By resultant theory, the polynomial system $D_{\ca2}(z_1,z_2,q^2) = 0$, $z_1^m + \mu_1 = z_2^m + \mu_2$ has a finite number of roots $(z_1,z_2)$, which implies that $D_{\ca2}(s,q^2) = 0$ has a finite number of roots $s$, including multiplicities.

Next we state the analogue of Theorem~\ref{t:ILT-ss} for the present model.  
Recall that for $q=0$ equation \eqref{e:ModelBFourier} reduces to $\partial_t\hat\w = \bar\A\hat\w$, whose solutions decay exponentially due to the Turing conditions. 
Hence, in the following we focus on $q\neq 0$ and with initial data $\hat\w(q,0) = (\hat w_{10},\hat w_{20})$ as well as $\mu_\Delta:=\mu_1-\mu_2$, we define the complex numbers
\begin{align*}
	C_{\bp,1} &:= \frac{(\mu_\Delta^\gamma+ d_4 q^2)\hat w_{10} - d_2 q^2\hat w_{20}}{d_1q^2(\mu_\Delta^\gamma + d_4 q^2) - d_2 d_3 q^4}\frac{\sin(\pi(1-\gamma))}{\pi}\Gamma(\gamma),\\
	C_{\bp,2} &:= \frac{((-\mu_\Delta)^\gamma+ d_1 q^2)\hat w_{20} - d_3 q^2\hat w_{10}}{d_4q^2((-\mu_\Delta)^\gamma + d_1 q^2) - d_2 d_3 q^4}\frac{\sin(\pi(1-\gamma))}{\pi}\Gamma(\gamma),\\
	C_{\bp,3} &:= \frac{d_3 ((\mu_\Delta^\gamma+d_4 q^2)\hat w_{10} - d_2q^2\hat w_{20}) }{\mu_\Delta^{1-\gamma}q^2(d_2 d_3 q^2-d_1(\mu_\Delta^{\gamma}+d_4 q^2))^2} \frac{\sin(\pi\gamma)}{\pi}\Gamma(1+\gamma).
\end{align*}
Note that $\Re(\mu_\Delta)=0$ in case \textbf{cc}, and $\mu_\Delta>0$ in case \textbf{nr} due to the Turing conditions.
In the above quantities the denominators are nonzero in $\Omega_\ca$ for $q\neq 0$ since $D_{\ca2}(\mu_j,q^2)\neq 0$, $j=1,2$, 
and $\mu_\Delta\neq0$ by assumption. 
We have $(C_{\bp,1},C_{\bp,2},C_{\bp,3})\neq 0$ for initial data outside the kernel of the matrix
\[
\begin{pmatrix}
\mu_\Delta^\gamma + d_4 q^2 & -d_2 q^2\\
-d_3 q^2 & (-\mu_\Delta)^\gamma + d_1 q^2
\end{pmatrix},
\]
and for almost all initial data each of $C_{\bp,1},C_{\bp,2},C_{\bp,3}$ is non-zero.

\begin{theorem}\label{t:ILT-ca}
	Let $\gamma\in(0,1)\cap\mathbb{Q}$ and
	$\lambda:=\sup(\Re(\Lambda_\ca))$. Let $\hat\w(q,t)$ be the solution to \eqref{e:ModelBFourier} with nonzero initial data $\hat\w_0$. It holds that 
	\begin{itemize}
	\item[(1)] If $\lambda\geq\Re(\mu_1)$ and $S^+:=\{(s,q)\in\Omega_\ca^{0+}\times (\RR\setminus\{0\}): D_\ca(s,q^2)=0,\ \text{and}\ \Re(s)\text{ maximal }\}\neq \emptyset$, then for any $(s_0,q_0)\in S^+$ we have $\hat\w(q,t) = C_{\exp}t^{k-1} e^{s_0t} + o(t^{k-1}e^{\Re(s_0)t})$ with $C_{\exp}\in\CC^2$ nonzero for almost all initial data and $k$ the multiplicity of $s_0$ as the root of $D_{\ca2}(s,q_0^2)=0$.

	\item[(2)] If (i) $\lambda=\Re(\mu_1)$ and $S^+=\emptyset$ or (ii) $\lambda\in(\Re(\mu_2),\Re(\mu_1))$, then $S^-:=\{(s,q)\in\Omega_\ca^-\times (\RR\setminus\{0\}): D_\ca(s,q^2)=0,\ \text{and}\ \Re(s)\text{ maximal } \} \neq \emptyset$ and for any $(s_0,q_0)\in S^-$, with $k, C_{\exp}$ as in item (1) it holds that 
	\begin{align*}
\hat\w(q,t) = 
	\begin{pmatrix}
C_{\bp,1} & 0\\ C_{\bp,3}\,t^{-1} & C_{\bp,2}
\end{pmatrix}\begin{pmatrix}
e^{\mu_1 t} \\ e^{\mu_2 t}
\end{pmatrix}t^{-\gamma} + C_{\exp}t^{k-1} e^{s_0t} + o(t^{-\gamma} e^{\Re(\mu_1) t}).
\end{align*}
	\item[(3)] If (i) $\lambda\leq\Re(\mu_2)$ or (ii) $\Lambda_\ca=\emptyset$, then for any $q\in \RR\setminus\{0\}$ it holds that
	 	\begin{align*}
\hat\w(q,t) = 
	\begin{pmatrix}
C_{\bp,1} & 0\\ C_{\bp,3}\,t^{-1} & C_{\bp,2}
\end{pmatrix}\begin{pmatrix}
e^{\mu_1 t} \\ e^{\mu_2 t}
\end{pmatrix}t^{-\gamma} + o(t^{-\gamma} e^{\Re(\mu_1) t}).
\end{align*}
	\end{itemize}
\end{theorem}
We defer the technical proof to Appendix \ref{s:InverseLaplaceNonzeroBranchPoint}. Concerning case (1) in the theorem we remark that if $\lambda>\Re(\mu_1)$, then the set $S^+$ is guaranteed to be non-empty.
Regarding cases (2) and (3) in the theorem, we note that the leading order term in the second component of $\hat\w(q,t)$ is $C_{\bp,2} e^{\mu_2 t}t^{-\gamma}$ in case \textbf{cc}, while it is $C_{\bp,3} e^{\mu_1 t}t^{-\gamma-1}$ in case \textbf{nr}, i.e., $\mu_\Delta>0$. We also have absorbed a  term of order $e^{\mu_2 t}t^{-\gamma-1}$ into the remainder term.

\begin{remark}
The statement also holds for $\Re(\mu_1)=0$ and scalar equations, and thus includes \eqref{e:Henry2006} as well as the subdiffusion equation \eqref{e:Subdiffusion}. Notably, for the latter Theorem~\ref{t:ILT-ca} gives the decay as $t^{-\gamma}$ (for rational $\gamma$), which coincides with that derived in Section~\ref{s:Subdiffusion} via \eqref{e:AsympML}.
\end{remark}

\begin{remark}\label{l:rational}
As for Theorem~\ref{t:ILT-ss} the method of proof only allows for rational $\gamma$ and it would be 
interesting to investigate the general case. 
In contrast to Theorem \ref{t:ILT-ss}, the coefficients $C_{\bp,j},j=1,2,3$ decay as $q^{-2}$ in Theorem \ref{t:ILT-ca} so that we expect some smoothening of initial data.
\end{remark}

Similar to Theorem~\ref{t:ILT-ss}, Theorem \ref{t:ILT-ca} reveals that the roots of \eqref{e:FracDRModelB} determine the temporal behaviour of $\hat{\w}$. However, in contrast to Theorem \ref{t:ILT-ss}, the Turing instability condition $\Re(\mu_1)<0$ implies exponential decay for stable (pseudo-)spectrum. The key point is that on the one hand $\mu_1,\mu_2$ are the branch points of \eqref{e:ModelBFourier}, while the branch point of \eqref{e:ModelAFourier} is the origin.  On the other hand, $s=\mu_1, \mu_2$ also solves the dispersion relation. This is one of the reasons for the algebraic correction $t^{-\gamma}$ compared with the algebraic decay $t^{\gamma-2}$ in Theorem \ref{t:ILT-ss}.

\begin{remark}\label{r:relatetoNepo}
As mentioned in Section~\ref{s:modelling}, model \eqref{e:Langlands2008} is different from \eqref{e:Nec2007a} discussed in \cite{Nec2007a} due to \eqref{e:modelBdifference}, as well as the dispersion relations, cf.\ \eqref{e:FracDRModelB} and \cite[eq.~24]{Nec2007a}. This can be illustrated for the scalar case. The dispersion relation of \eqref{e:Nec2007a} reads $(s-a)^\gamma+dq^2=0$ which has no roots on the branch point $s=a$ for $q\neq 0$, whereas \eqref{e:ModelB-DRscalar} does. Then according to Theorem \ref{t:ILT-ss}, each Fourier mode of \eqref{e:Nec2007a} with $q\neq0$ decays algebraically as $t^{\gamma-2}$, whereas the one of \eqref{e:Henry2006} decays exponentially by Theorem \ref{t:ILT-ca}.
	
	In the multispecies case, the eigenvalues of $\A$ are roots of \eqref{e:FracDRModelB} as well as the branch points, whereas they are not roots of \cite[eq.~24]{Nec2007a} in general. This implies the Fourier modes of \eqref{e:Langlands2008} and \eqref{e:Nec2007a} have different decays. Another difference in the system case in \cite[eq.~28]{Nec2007a} is the perhaps unrecognised implicit assumption that $\P^{-1}(s\I - \A)^{\gamma} \P$ is equal to $(s\I - \P^{-1}\A\P)^{\gamma}$.
\end{remark}

\subsection{Convergence to regular spectrum}\label{s:convergence}

As in Theorem~\ref{t:ConvergenceModelA} the subdiffusion (pseudo-)spectrum converges to the regular spectrum as $\gamma\to1$. The regular dispersion relation of \eqref{e:RegLinearSystem} in the coordinates $\u =\P \w $ reads
\begin{equation}\label{e:normaldisprel}
D_{\reg}(s,q^2)=(s-\mu_1+d_{1}q^2)(s-\mu_2+d_{4}q^2)-d_{2}d_{3}q^4=0, \quad s\in\CC
\end{equation}
which allows for direct comparison with the non-trivial factor of \eqref{e:FracDRModelB} given by
\begin{equation}\label{e:FracDRModelB2}
	D_{\ca2}(s,q^2) :=  \left(\left(s - \mu_1\right)^\gamma + d_{1} q^2\right) \left(\left(s - \mu_2\right)^\gamma + d_{4} q^2\right) - d_{2} d_{3} q^4 = 0, \quad s \in \Omega_\ca.
\end{equation}
Notably, $s = \mu_1,\,\mu_2$ solve \eqref{e:FracDRModelB} for any $q$ but are not in $\Omega_{\ca}$, while these are roots for \eqref{e:normaldisprel} at $q = 0$, and generically not otherwise. 

\begin{theorem}\label{t:ConvergenceModelB}
	For any compact set $K\subset \subset \Omega_\ca$, $\lim_{\gamma \to 1}(K \cap \Lambda_\ca) = (K \cap \Lambda_\reg)$.
\end{theorem}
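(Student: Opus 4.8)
The plan is to mirror the proof of Theorem~\ref{t:ConvergenceModelA} via Lemma~\ref{l:ConvergeLocallyUniformly}, namely a Rouch\'e argument on compact sets combined with a separate control of the large-wavenumber part of the (pseudo-)spectrum that escapes any compact set. First I would isolate the relevant non-trivial factor: since the prefactor $(s-\mu_1)^{1-\gamma}(s-\mu_2)^{1-\gamma}$ is nonzero on $\Omega_\ca$ (it only vanishes at the branch points $\mu_1,\mu_2\notin\Omega_\ca$), the roots of $D_\ca$ in $\Omega_\ca$ coincide with those of $D_{\ca2}$ in \eqref{e:FracDRModelB2}. Then I would set $f(s):=D_{\reg}(s,q^2)$ as in \eqref{e:normaldisprel} and $g(s):=D_{\ca2}(s,q^2)-D_{\reg}(s,q^2)$, and observe that $g$ is a finite sum of terms each carrying a factor $(s-\mu_j)^\gamma-(s-\mu_j)$ (for $j=1,2$), which is holomorphic on $\Omega_\ca$ and converges to $0$ pointwise, hence locally uniformly on $\Omega_\ca$, as $\gamma\to1$; here one uses $(s-\mu_j)^\gamma=e^{\gamma\ln(s-\mu_j)}$ with the branch chosen in the definition of $\Omega_\ca^{\cc}$ resp.\ $\Omega_\ca^{\nr}$, so that $\ln(s-\mu_j)$ is holomorphic and bounded on compact subsets of $\Omega_\ca$.

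With $g\to0$ locally uniformly in hand, the Rouch\'e step is essentially verbatim from Lemma~\ref{l:ConvergeLocallyUniformly}: for fixed parameters and wavenumber $q$, the two regular roots $s_1(q),s_2(q)$ of \eqref{e:normaldisprel} are enclosed by small disjoint disks $B_{r_j}(s_j)\subset\Omega_\ca$ on whose boundaries $|f|>|g|$ once $\gamma$ is close enough to $1$, so $D_{\ca2}$ has exactly the same count of zeros (with multiplicity) inside, and shrinking $r_j$ as $\gamma\to1$ gives $s_j(\gamma,q)\to s_j(q)$; uniformity in $q$ over compacta follows because all estimates depend continuously on $q$. The degenerate situation where a regular root sits on a branch cut $\BC_{\mu_j}^{\pm\theta_j}$ (for instance the case $s=\mu_j$ at $q=0$, where $\mu_j\notin\Omega_\ca$) is handled exactly as in Fig.~\ref{f:Rouche2}: enclose a punctured neighbourhood by a contour $C$ avoiding $B_\epsilon(\mu_j)$ and the branch cut, note $f$ has no zero inside $C$, conclude $f+g$ has none either for $\gamma$ near $1$, so the subdiffusion (pseudo-)spectrum near that point collapses into $B_\epsilon(\mu_j)$ and converges to $\mu_j$.

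Finally I would rule out spurious limit points coming from $|q|\to\infty$. The scalar model already shows the relevant mechanism: for the diagonalised system the large-wavenumber roots of \eqref{e:FracDRModelB2} behave like $(-d_jq^2)^{1/\gamma}+\mu_j$-type expressions whose real parts tend to $-\infty$ (for $\gamma\in(\pi/(\pi+\theta_j),1)$, in particular for $\gamma$ close to $1$) rather than accumulating in a compact set; hence no (pseudo-)spectrum from $|q|>Q$ can enter a fixed compact $K\subset\subset\Omega_\ca$. Combined with: (i) if the regular spectrum meets $K$ it does so only through bounded-$q$ modes, which are captured by the Rouch\'e step, and (ii) if the regular spectrum stays away from $K$ then for $\gamma$ near $1$ so does the subdiffusion (pseudo-)spectrum in $K$ (again by Rouch\'e on the bounded-$q$ part, plus the escape-to-$-\infty$ of the large-$q$ part), this yields $\lim_{\gamma\to1}(K\cap\Lambda_\ca)=(K\cap\Lambda_\reg)$. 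The main obstacle is the bookkeeping around the branch cuts in the \textbf{cc} case, where there are two branch points with asymmetric cut directions ($\BC_{\mu_1}^{-\theta_1}$ and $\BC_{\mu_2}^{\theta_2}$): one must check that a compact $K\subset\subset\Omega_\ca^\cc$ stays uniformly away from both cuts so that $\ln(s-\mu_1)$ and $\ln(s-\mu_2)$ are simultaneously holomorphic and bounded there, which is what makes $g\to0$ locally uniformly — everything else is a routine transcription of the $\gamma\to1$ argument already carried out for model \eqref{e:ModelA}.
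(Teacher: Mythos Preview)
Your proposal is correct and takes essentially the same approach as the paper, which simply invokes the analogue of Lemma~\ref{l:ConvergeLocallyUniformly} (Rouch\'e near and away from the branch points, as in Fig.~\ref{f:convergence}) for both cases \textbf{cc} and \textbf{nr}. In fact you supply more detail than the paper's terse proof: you make explicit the reduction to $D_{\ca2}$, the form of $g$ and why it vanishes locally uniformly, and the large-$|q|$ escape argument, all of which the paper leaves to the reader by analogy with Section~\ref{s:Model1}.
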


\begin{proof}
The basis of the proof is the analogue of Lemma \ref{l:ConvergeLocallyUniformly}. In both cases, the application of Rouch\'{e} theorem near and away from the branch points are completely analogous to that in Lemma \ref{l:ConvergeLocallyUniformly}, cf.\ Fig.~\ref{f:convergence}.
\end{proof}
\begin{figure}[t]
	\centering
	\subfloat[]{\includegraphics[width=0.35\linewidth]{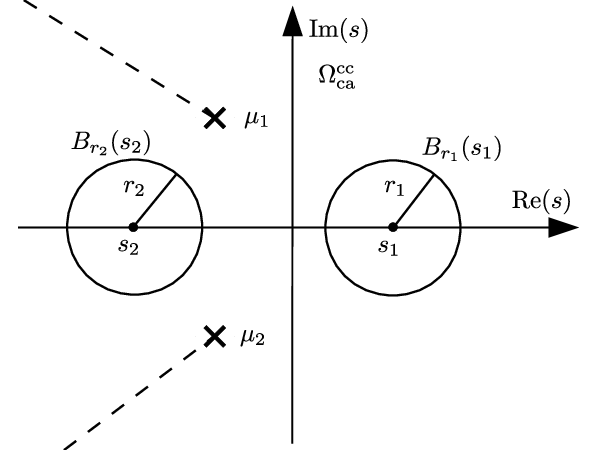}\label{f:Rouche-cc}}
	\hfil
	\subfloat[]{\includegraphics[width=0.35\linewidth]{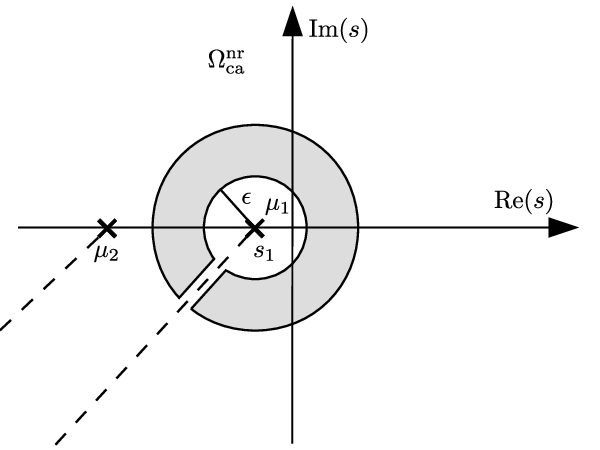}\label{f:Rouche-nr}}
	\caption{Branch cuts (dashed) and points (crosses) for \eqref{e:FracDRModelB}, and regular spectrum (dots) $s_1,\,s_2$. (a) Case \textbf{cc}; (b) case \textbf{nr}.}
	\label{f:convergence}
\end{figure}

\subsubsection{Spectrum near zero and Turing instability}

Since the onset of Turing instability concerns spectrum near the origin, we can analyse this by Taylor expanding the dispersion relation at $s=0$. 

\medskip
We will show that the Taylor expansion of $D_{\ca 2}=0$ to quadratic order is
	\begin{equation}\label{e:Taylorapproxdisprel}
	\widetilde{D}_{\ca 2}(s,q^2) := s^2 +\gamma^{-1}\left(\beta_1 q^2-\tr(\A )\right)s+\gamma^{-2}h(q^2)=0,
	\end{equation}
	where $\beta_1(\gamma,d) := c_{1}+c_{4}d$, $\beta_2(\gamma) := a_{1}c_{4}-a_{2}c_{3}$, $\beta_3(\gamma) := a_{4}c_{1}-a_{3}c_{2}$, and
	\begin{align*}
	\C &:=\P \left(-\bar{\A}\right)^{1-\gamma}\P ^{-1} = \begin{pmatrix}
	c_{1}	&	c_{2}\\
	c_{3}	&	c_{4}
	\end{pmatrix},&
	h(q^2) &:= \det(\C )\cdot dq^4-\left(\beta_2 d+\beta_3\right)q^2+\det(\A ).
	\end{align*}
	Note that $\det(\C ) = \det ((-\bar{\A})^{1-\gamma}) = (\mu_1\mu_2)^{1-\gamma} > 0$.
Now the computation of the Turing instability parameters for \eqref{e:Taylorapproxdisprel} follows, with some caveats, the approach for two component systems with regular diffusion. Since this gives the onset of instability through the origin, the same follows for the subdiffusion spectrum. As detailed in the following Theorem statement, the Turing instability parameters are
	\begin{align*}
	\gamma_\cc &:= \frac{1}{\theta}\arccot\left(\frac{a_{1}}{|\mu|\sin(\theta)}+\cot(\theta)\right) \in(0,1), \\
	\gamma_\nr &:= \left(\ln\left(\frac{a_{1}-\mu_1}{a_{1}-\mu_2}\right)\right)\left(\ln\left(\frac{\mu_1}{\mu_2}\right)\right)^{-1}\in(0,1),\\
	d_\gamma &:= -\frac{\beta_3}{\beta_2}+\frac{2}{\beta_2^2}\left(\det(\A \C )+\sqrt{\left(\det(\A \C )\right)^2-\beta_2\beta_3\det(\A \C )}\right), &
	q_\gamma^2 &:=\frac{\beta_2d_\gamma+\beta_3}{2d_\gamma\det(\C )}.
	\end{align*}
We write $\Lambda_\ca^\epsilon:= \Lambda_\ca\cap B_{\epsilon}(0)$ for the spectrum within a small ball $B_{\epsilon}(0)$ near the origin and set $\lambda_\epsilon := \sup(\Re(\Lambda_\ca^\epsilon))$. 

\begin{remark} 
The following Theorem holds verbatim for \eqref{e:Taylorapproxdisprel} instead of \eqref{e:FracDRModelB2} and any $\epsilon$, i.e., replacing $\Lambda_\ca^\epsilon$ by the roots of $\widetilde{D}_{\ca 2}$.
\end{remark}

\begin{theorem}\label{t:CriticalDiffusionRatioModel2}
For any sufficiently small $\epsilon>0$ there exists a unique minimal anomalous exponent $\gamma_{\A }\in(0,1)$ given by $\gamma_\cc$ in case \textbf{cc} and $\gamma_\nr$ in case \textbf{nr}, such that for any $\gamma \in (0,\gamma_\A )$, either $\lambda_{\epsilon} < 0$, or $\Lambda_\ca^\epsilon = \emptyset$, and for any $\gamma\in(\gamma_{\A },1)$ the following hold.
	\begin{itemize}
		\item[(1)] $d_\gamma$ is the unique critical diffusion coefficient such that $\sgn(\lambda_{\epsilon}) = \sgn(d - d_\gamma)$.
		\item[(2)] For $d = d_\gamma$, there exists a unique critical wavenumber given by $q_{\gamma}$ such that $\lambda_{\epsilon} = 0$, $\Lambda_\ca^\epsilon\cap i\RR=\{0\}$, and $\widetilde{D}_{\ca 2}(0,q^2) = 0$ precisely for $q=q_\gamma$.
		\item[(3)] $\lim_{\gamma \to \gamma_\A }d_\gamma = +\infty$ and $\lim_{\gamma \to 1} d_\gamma = d_c$.
	\end{itemize}
	\end{theorem}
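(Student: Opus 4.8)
The plan is to prove the statement first for the quadratic $\widetilde D_{\ca 2}$ from \eqref{e:Taylorapproxdisprel} --- this is a classical two-component Turing analysis with coefficients now depending on $\gamma$ through $\C=(-\A)^{1-\gamma}$ --- and then transfer it to $D_{\ca 2}$ near $s=0$ by a Rouch\'e/perturbation argument as in Lemma \ref{l:ConvergeLocallyUniformly}. For the derivation of \eqref{e:Taylorapproxdisprel} I would expand $(s-\mu_j)^\gamma=(-\mu_j)^\gamma(1-s/\mu_j)^\gamma=(-\mu_j)^\gamma+\gamma(-\mu_j)^{\gamma-1}s+\cO(s^2)$ on the branch prescribed by $\Omega_\ca^\cc,\Omega_\ca^\nr$ (which contains $0$, since $\Re(\mu_j)<0$), substitute into \eqref{e:FracDRModelB2}, and keep the terms up to $s^2$ coming from the product of the linear parts; dividing by the resulting leading coefficient $\gamma^2(-\mu_1)^{\gamma-1}(-\mu_2)^{\gamma-1}=\gamma^2(\det\A)^{\gamma-1}>0$ puts it into the stated monic form. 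The coefficient bookkeeping uses conjugation invariance of $\tr,\det$: $\beta_1=\tr(\D\C)=c_1+c_4d$, while the constant term is $D_{\ca 2}(0,q^2)=\det\bigl((-\bar\A)^\gamma+\bar\D q^2\bigr)$, which multiplied by $(\det\A)^{1-\gamma}=\det(\C)$ equals $\det(\C\D q^2-\A)=h(q^2)$, so the constant term of the monic quadratic is $\gamma^{-2}h(q^2)$. Two points are worth isolating: first, $\C$ is a \emph{real} matrix --- in case \textbf{nr} because $-\mu_1,-\mu_2>0$, in case \textbf{cc} because $-\mu_2=\overline{-\mu_1}$ lies off the branch cuts so $(-\A)^{1-\gamma}$ commutes with conjugation --- hence $\beta_1,\beta_2,\beta_3,h,\widetilde D_{\ca 2}$ are real; second, $\widetilde D_{\ca 2}(0,q^2)$ and $D_{\ca 2}(0,q^2)$ differ only by the nonzero factor $\gamma^{-2}(\det\A)^{1-\gamma}$, so they have the \emph{same} zeros in $q^2$, which is what makes the onset through $s=0$ identical for the two (as in the preceding Remark).

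For the Turing analysis of $\widetilde D_{\ca 2}(s,q^2)=s^2+\gamma^{-1}(\beta_1q^2-\tr\A)s+\gamma^{-2}h(q^2)=0$, write $s_\pm(q^2)$ for its roots, so $s_+s_-=\gamma^{-2}h(q^2)$ and $s_++s_-=-\gamma^{-1}(\beta_1q^2-\tr\A)$ with $-\tr\A>0$. The root with larger real part is negative at $q=0$ and stays negative while $h(q^2)>0$, and turns positive exactly when $h$ goes negative, so the onset is controlled by $\min_{q^2\ge0}h$. Since $h$ is a quadratic in $q^2$ with positive leading coefficient $\det(\C)d$ and $h(0)=\det\A>0$, its minimum is at $q_*^2=(\beta_2d+\beta_3)/(2\det(\C)d)$ provided $\beta_2d+\beta_3>0$, with value $\det\A-(\beta_2d+\beta_3)^2/(4\det(\C)d)$; setting this to zero yields a quadratic in $d$ whose larger root is exactly $d_\gamma$ (using $\det(\A\C)=\det\A\det\C$), and then $q_*^2=q_\gamma^2$. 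This gives (1)--(2) for $\widetilde D_{\ca 2}$: for $d<d_\gamma$ the minimum of $h$ is positive, so $\lambda_\epsilon<0$; for $d>d_\gamma$ it is negative, so $\lambda_\epsilon>0$; for $d=d_\gamma$ it vanishes at the single point $q^2=q_\gamma^2$, where one checks $\beta_1q_\gamma^2-\tr\A>0$ so that $\widetilde D_{\ca 2}(i\omega,q^2)=0$ forces $\omega=0$ (no Hopf), giving $\Lambda_\ca^\epsilon\cap i\RR=\{0\}$, and via $s\approx-\gamma^{-1}h(q^2)/(\beta_1q^2-\tr\A)$ the transversality $\sgn\lambda_\epsilon=\sgn(d-d_\gamma)$.

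This entire picture requires $\beta_2>0$: if $\beta_2\le0$, then $\beta_2d+\beta_3$ is non-positive for large $d$ (and the $d_\gamma$ coming from the quadratic in $d$ satisfies $\beta_2 d_\gamma+\beta_3\le0$), so $q_*^2\le0$ and $h(q^2)\ge\det\A>0$ on $q^2\ge0$, forcing $\lambda_\epsilon<0$ or $\Lambda_\ca^\epsilon=\emptyset$; hence $\gamma_\A$ is the value where $\beta_2(\gamma)$ changes sign. Solving $\beta_2(\gamma)=a_1c_4-a_2c_3=0$ with the explicit $\P,\P^{-1}$ reduces to $\bigl((-\mu_1)/(-\mu_2)\bigr)^{1-\gamma}=\mu_1(a_1-\mu_2)/\bigl(\mu_2(a_1-\mu_1)\bigr)$; taking logarithms in case \textbf{nr} gives $\gamma=\gamma_\nr\in(0,1)$, and the analogous polar computation in case \textbf{cc} gives $\gamma=\gamma_\cc\in(0,1)$, uniqueness following from monotonicity of $\gamma\mapsto\bigl((-\mu_1)/(-\mu_2)\bigr)^{1-\gamma}$; and $\beta_2(1)=a_1>0$ identifies $(\gamma_\A,1)$ as the interval with $\beta_2>0$. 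Part (3): as $\gamma\to1$, $\C\to\I$, so $\beta_1\to1+d$, $\beta_2\to a_1$, $\beta_3\to a_4$, $\det\C\to1$, whence $\widetilde D_{\ca 2}\to D_{\reg}$ and $d_\gamma\to d_c$; as $\gamma\to\gamma_\A$, $\beta_2\to0$ with the numerator of $d_\gamma$ not vanishing, whose sign gives $d_\gamma\to+\infty$. Finally, to transfer to $D_{\ca 2}$ for small $\epsilon$: spectrum in $B_\epsilon(0)$ forces $|s|$, hence $h(q^2)$, small, so only bounded $q^2$ near $q_\gamma^2$ is relevant (for large $q$ the analogue of the scalar asymptotics $s\sim(dq^2)^{1/\gamma}e^{i\pi/\gamma}+\mu_j$ pushes $\Re(s)\to-\infty$, well outside $B_\epsilon(0)$); there $D_{\ca 2}(0,q^2)$ and $\widetilde D_{\ca 2}(0,q^2)$ have the same zeros and, away from them, the $D_{\ca 2}$-root nearest $0$ has the same leading behaviour, so Rouch\'e gives convergence of the $B_\epsilon(0)$-roots, transversality in $d$, uniqueness of $d_\gamma,q_\gamma$ up to $\cO(\epsilon)$, and persistence of $\Lambda_\ca^\epsilon\cap i\RR=\{0\}$; together with Theorem \ref{t:ILT-ca} and the definition of $\Lambda_\ca$ this is the claim for $\Lambda_\ca^\epsilon$.

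I expect the main obstacle to be a combination of bookkeeping rather than any single deep step: keeping the branch assignments of $(s-\mu_j)^\gamma$ consistent with $\Omega_\ca^\cc,\Omega_\ca^\nr$ while Taylor expanding, verifying that $\C$ is genuinely real in case \textbf{cc}, and --- most delicate --- checking the sign conditions ($\beta_2>0\iff\gamma>\gamma_\A$; $\beta_1q_\gamma^2-\tr\A>0$ at threshold so the bifurcation is stationary and not oscillatory; the correct sign forcing $d_\gamma\to+\infty$ rather than $-\infty$) that are needed for the clean Turing picture to hold uniformly on $(\gamma_\A,1)$.
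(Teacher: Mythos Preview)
Your proposal is correct and follows essentially the same route as the paper: Taylor expand $D_{\ca 2}$ at $s=0$ to obtain the monic quadratic $\widetilde D_{\ca 2}$ via the conjugation $\C=\P(-\bar\A)^{1-\gamma}\P^{-1}$, run the classical two-component Turing analysis on $h(q^2)$ to extract $d_\gamma$ and $q_\gamma$, identify $\gamma_\A$ as the sign change of $\beta_2(\gamma)$, and transfer back to $D_{\ca 2}$ by closeness near the origin. You are somewhat more explicit than the paper on two points it leaves implicit---the realness of $\C$ in case \textbf{cc} and the no-Hopf check $\beta_1 q_\gamma^2-\tr\A>0$---and your anticipated obstacles (branch consistency, sign conditions) are exactly the bookkeeping the paper carries out in its case \textbf{cc} computation of $\beta_1,\beta_2,H(\gamma)$.
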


\begin{proof}
	\underline{Case \textbf{cc}}: Taylor expanding, with $\mu_j\neq0$, $j=1,2$, 
	\[
	(s-\mu_j)^\gamma = (-\mu_j)^\gamma + \gamma(-\mu_j)^{\gamma-1} s + \cO(|s|^2),
	\]
	gives the approximate dispersion relation \eqref{e:FracDRModelB2} in $B_{\epsilon}(0)$ quadratic in $s$ as
	\begin{equation}\label{e:Taylorapproxdisprelation}
	\left[(-\mu_1)^\gamma+\gamma(-\mu_1)^{\gamma-1}s+d_{1}q^2\right]\left[(-\mu_2)^\gamma+\gamma(-\mu_2)^{\gamma-1}s+d_{4}q^2\right]-d_{2}d_{3}q^4=0.
	\end{equation}
	In order to see the dependence on the diffusion ratio $d$, we perform the following transform. First, multiply \eqref{e:Taylorapproxdisprelation} with $\gamma^{-1}(-\mu_1)^{1-\gamma}\gamma^{-1}(-\mu_2)^{1-\gamma}$ and rewrite the resulting equation as
	\begin{equation*}
	\det\left(s\I +\gamma^{-1}q^2\bar{\D }\left(-\bar{\A}\right)^{1-\gamma}-\gamma^{-1}\bar{\A}\right)=0.
	\end{equation*}
	Second, change coordinates through left-multiplying by $\P$ and right-multiplying by $\P ^{-1}$ to
	\begin{equation*}
	\det\left(s\I +\gamma^{-1}q^2\D \P \left(-\bar{\A}\right)^{1-\gamma}\P ^{-1}-\gamma^{-1}\A \right)=0.
	\end{equation*}
	Here the matrix $\C =\P \left(-\bar{\A}\right)^{1-\gamma}\P ^{-1}$ can be expressed as
	\begin{equation*}
	\C =
	\begin{pmatrix}
	c_{1}	&	c_{2}\\
	c_{3}	&	c_{4}
	\end{pmatrix} = 
	\begin{pmatrix}
	\frac{(-\mu_1)^{1-\gamma}(a_{1}-\mu_2)-(-\mu_2)^{1-\gamma}(a_{1}-\mu_1)}{\mu_1-\mu_2}				&	\frac{a_{2}((-\mu_1)^{1-\gamma}-(-\mu_2)^{1-\gamma})}{\mu_1-\mu_2}\\
	\frac{(a_{1}-\mu_1)(a_{1}-\mu_2)((-\mu_2)^{1-\gamma}-(-\mu_1)^{1-\gamma})}{a_{2}(\mu_1-\mu_2)}	&	\frac{(-\mu_2)^{1-\gamma}(a_{1}-\mu_2)-(-\mu_1)^{1-\gamma}(a_{1}-\mu_1)}{\mu_1-\mu_2}
	\end{pmatrix}.
	\end{equation*}
	from which somewhat tedious computations give $\widetilde{D}_{\ca 2}(s,q^2)$, and this is $\cO (\epsilon^2)$-close to $D_{\ca2}(s,q^2)$ for $s\in B_\epsilon(0)$. 
	
	We claim that $\beta_1>0$ if $d>1$. Set $-\mu_1=|\mu| e^{i\theta}$ and $-\mu_2=|\mu| e^{-i\theta}$, where without loss of generality $\theta \in (0,\pi/2)$. Then $\beta_1$  simplifies, which directly shows the claim:
	\begin{align*}
	\beta_1(\gamma,d) & = \tfrac{a_{1}(d-1)\left(\left(-\mu_2\right)^{1-\gamma}-\left(-\mu_1\right)^{1-\gamma}\right)+\mu_1\mu_2\left(\left(-\mu_1\right)^{-\gamma}-\left(-\mu_2\right)^{-\gamma}\right)+d\left(\left(-\mu_2\right)^{2-\gamma}-\left(-\mu_1\right)^{2-\gamma}\right)}{\mu_1-\mu_2}\\
	& = \frac{a_{1}(d-1)|\mu|^{-\gamma}\sin(\theta(1-\gamma))+|\mu|^{1-\gamma}\sin(\theta\gamma)+d|\mu|^{1-\gamma}\sin(\theta(2-\gamma))}{\sin(\theta)}.
	\end{align*}
	
	Since $\tr(\A )<0$, the prefactor of $s$ in \eqref{e:Taylorapproxdisprel} is positive so that \eqref{e:Taylorapproxdisprel} has a solution $s(q)>0$ if and only if $h(q^2)<0$ for some $q$. We note that $\det(\A )>0$ and
	\begin{equation*}
	\det(\C ) = \det ((-\bar{\A})^{1-\gamma}) = (\mu_1\mu_2)^{1-\gamma} > 0,
	\end{equation*}
	so $h(q^2)<0$ for some real $q$ requires
	\begin{equation}\label{e:necessaryconditionmodel2}
	\beta_2 d+\beta_3>0,
	\end{equation}
	where $\beta_3<0$ follows by straightforward calculation. As to the sign of $\beta_2$ we compute 
	\begin{align*}
	\beta_2(\gamma) & = -\frac{\mu_1\mu_2}{\mu_1-\mu_2}\left[a_{1}\left(\left(-\mu_2\right)^{-\gamma}-\left(-\mu_1\right)^{-\gamma}\right)+\left(\left(-\mu_2\right)^{1-\gamma}-\left(-\mu_1\right)^{1-\gamma}\right)\right]\\
	& = \frac{|\mu|^{1-\gamma}}{\sin(\theta)}\left[a_{1}\sin(\theta\gamma)-|\mu|\sin(\theta(1-\gamma))\right].
	\end{align*}
 Hence, $H(\gamma):=a_{1}\sin(\theta\gamma)-|\mu|\sin(\theta(1-\gamma))$ has the sign of $\beta_2(\gamma)$, and $\beta_2(\gamma)=0$ if and only if $H(\gamma)=0$, which is equivalent to
	\begin{equation}\label{e:mingammaequivalenteqn}
	\cot(\theta\gamma) = \frac{a_{1}}{|\mu|\sin(\theta)}+\cot(\theta).
	\end{equation}
	The solution to \eqref{e:mingammaequivalenteqn} is given by $\gamma_\cc$; note that it depends on $\A$ only. From \eqref{e:mingammaequivalenteqn} we know that $\cot(\theta\gamma_\cc) > \cot(\theta)$, so $\gamma_\cc < 1$. Since 
	\begin{equation*}
	\frac{\dif}{\dif\gamma}H(\gamma) = a_{1} \theta \cos(\theta\gamma) + \mu \theta \cos(\theta(1-\gamma)) > 0.
	\end{equation*}
we have $\sgn(H(\gamma))=\gamma-\gamma_\cc$. For $\gamma<\gamma_\cc$ the condition \eqref{e:necessaryconditionmodel2} implies $d < -\beta_3/\beta_2 < 0$, which violates the assumption $d>0$. Hence, the following conditions must be satisfied
	\begin{equation}\label{e:ModelB-InstabilityConditions}
	\gamma > \gamma_\cc \quad \mathrm{and} \quad d > -\beta_3/\beta_2>0,
	\end{equation}
	and $\gamma\to\gamma_\cc^+$ implies $\beta_2 \to 0^+$, which means $d \to +\infty$. 
	
	Straightforward computations give the minimum of $h(q^2)$ and the associated argument 
	\begin{equation*}
	h_{\min} = \frac{4d\det(\A \C )-\left(\beta_2d+\beta_3\right)^2}{4d\det(\C )}, \quad q^2_{\min} = \frac{\beta_2d+\beta_3}{2d\det(\C )}.
	\end{equation*}
	If $h_{\min}<0$, then $h(q^2)<0$ for $q^2\in\left(q^2_-,q^2_+\right)$, where $q^2_{\pm}$ are the two roots of $h(q^2)=0$. Furthermore, $h_{\min}<0$ gives
	\begin{equation*}
	\beta_2^2d^2+\left(2\beta_2\beta_3-4\det(\A \C )\right)d+\beta_3^2 > 0,
	\end{equation*}
	whose boundary points are
	\begin{equation*}
	d_\pm = -\frac{\beta_3}{\beta_2}+\frac{2}{\beta_2^2}\left(\det(\A \C )\pm\sqrt{\left(\det(\A \C )\right)^2-\beta_2\beta_3\det(\A \C )}\right).
	\end{equation*}
	Since the discriminant is positive $d_{\pm}\in\RR$ exist, but $d<d_-$ does not satisfy \eqref{e:ModelB-InstabilityConditions}. Hence, $d_\gamma$ is the Turing bifurcation point and satisfies Item (1) for \eqref{e:Taylorapproxdisprel}. Since this means the onset of instability is through the origin $s=0$, the same and the following characterisation hold for the subdiffusion spectrum sufficiently close to the origin. Moreover, $\lim_{\gamma \to \gamma_\cc} d_{\gamma} = +\infty$ and $\lim_{\gamma \to 1} d_{\gamma} = d_c$. The critical wavenumber is given by $q_\gamma^2$.
	
	\medskip
	\underline{Case \textbf{nr}}: Similar to the case \textbf{cc}, the dispersion relation \eqref{e:FracDRModelB2} can be approximated by \eqref{e:Taylorapproxdisprelation} within $B_{\epsilon}(0)$, but here the minimum anomalous exponent is given by $\gamma_\nr$. We note $\mu_1\neq\mu_2$, $a_{1}\neq\mu_1$ and $a_{1}\neq\mu_2$ due to the Turing conditions on $\A$, which guarantees the existence of $\gamma_\nr$.
\end{proof}

\medskip
This theorem shows that the critical spectrum of system \eqref{e:ModelB} has the ``Turing shape'' of regular diffusion, i.e., near the origin it is real, has maxima at selected wavenumbers, and crosses the origin for increasing $d$. Moreover, the relation of reaction and subdiffusive motion in  \eqref{e:ModelB} stabilises the solution, i.e., the more anomalous the diffusion is, the more `difficult' in terms of the diffusion ratio it is for the solution to become unstable. It is even impossible to be unstable when the diffusion motion becomes too anomalous, i.e., is below $\gamma_\A$. We illustrate the stable and unstable region in Section~\ref{s:NumercialModel2}, cf.\ Fig.~\ref{f:instabilityregionmodel2} below.

\begin{remark}
Theorem~\ref{t:CriticalDiffusionRatioModel2} is valid only near the origin and thus instabilities through the imaginary axis may be missed. However, due to Theorem \ref{t:ConvergenceModelB}, if $\gamma$ is close to $1$, this cannot happen. \end{remark}

\subsection{Numerical computations of spectra}\label{s:NumercialModel2}

We illustrate Theorem \ref{t:ConvergenceModelB} with numerical computations for the cases \textbf{cc} and \textbf{nr}, respectively, where $\A$ in \eqref{e:ModelB} is given by
\begin{equation*}
\A_{\cc}=
\begin{pmatrix}
\frac{1}{2}	&	-\frac{3}{16}\\
8			&	-1
\end{pmatrix}
\quad\text{and}\quad
\A_{\nr}=
\begin{pmatrix}
1				&	1\\
-\frac{17}{8}	&	-2
\end{pmatrix}.
\end{equation*}
Here $\A_{\cc}$ has eigenvalues $\mu_{\pm}\approx-0.25\pm0.968i$, and $\A_{\nr}$ has eigenvalues $\mu_1\approx-0.854$, $\mu_2\approx-0.146$. In Fig.~\ref{f:instabilityregionmodel2} we plot the Turing instability threshold $d_\gamma$ and the minimum anomalous exponent $\gamma_{\A }$, and determine the stable and unstable region based on the results of Section~\ref{s:turingbifurcation}.
\begin{figure}[t]
	\centering
	\subfloat[Case \textbf{cc}]{\includegraphics[width=0.4\linewidth]{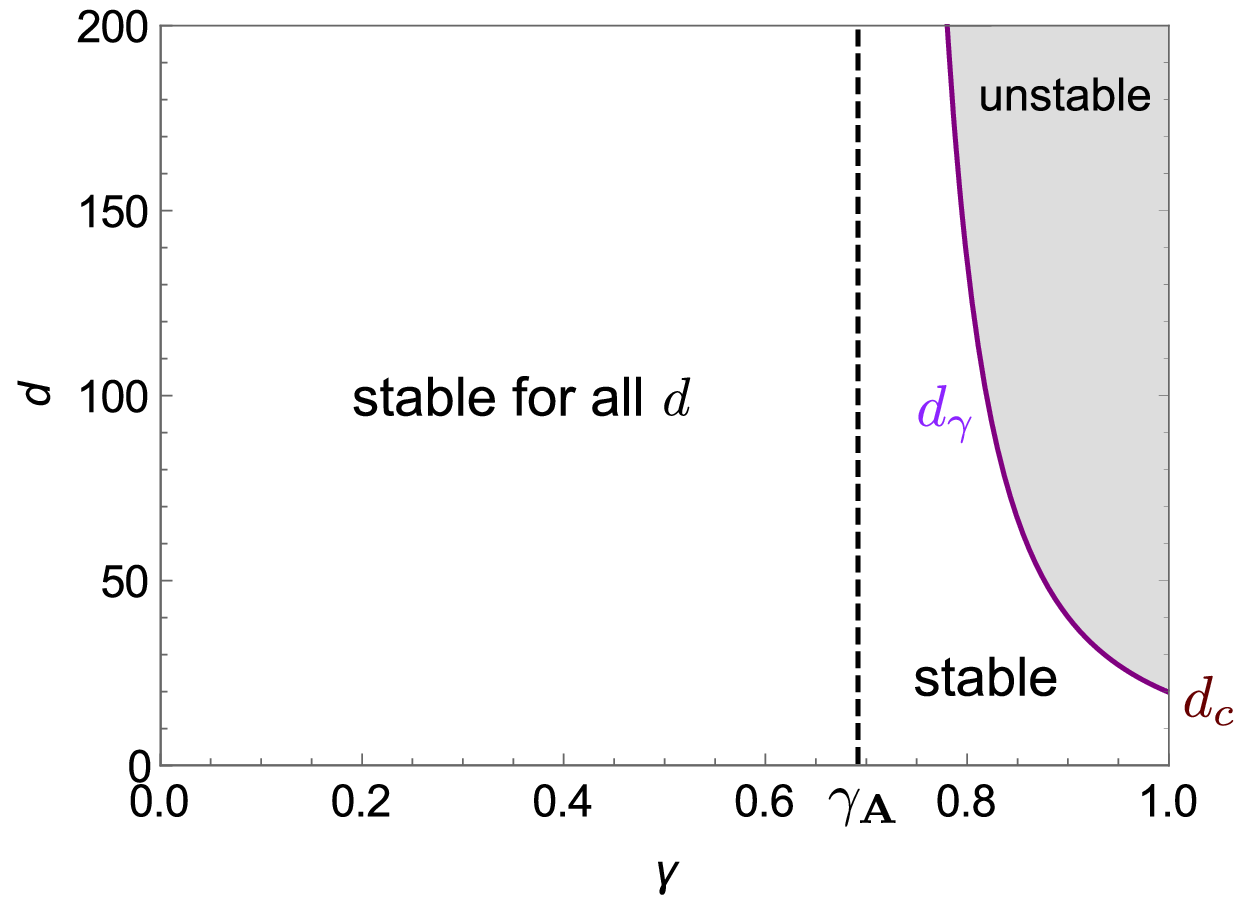}\label{f:instabilityregionmodel2complex}}
	\hfil
	\subfloat[Case \textbf{nr}]{\includegraphics[width=0.4\linewidth]{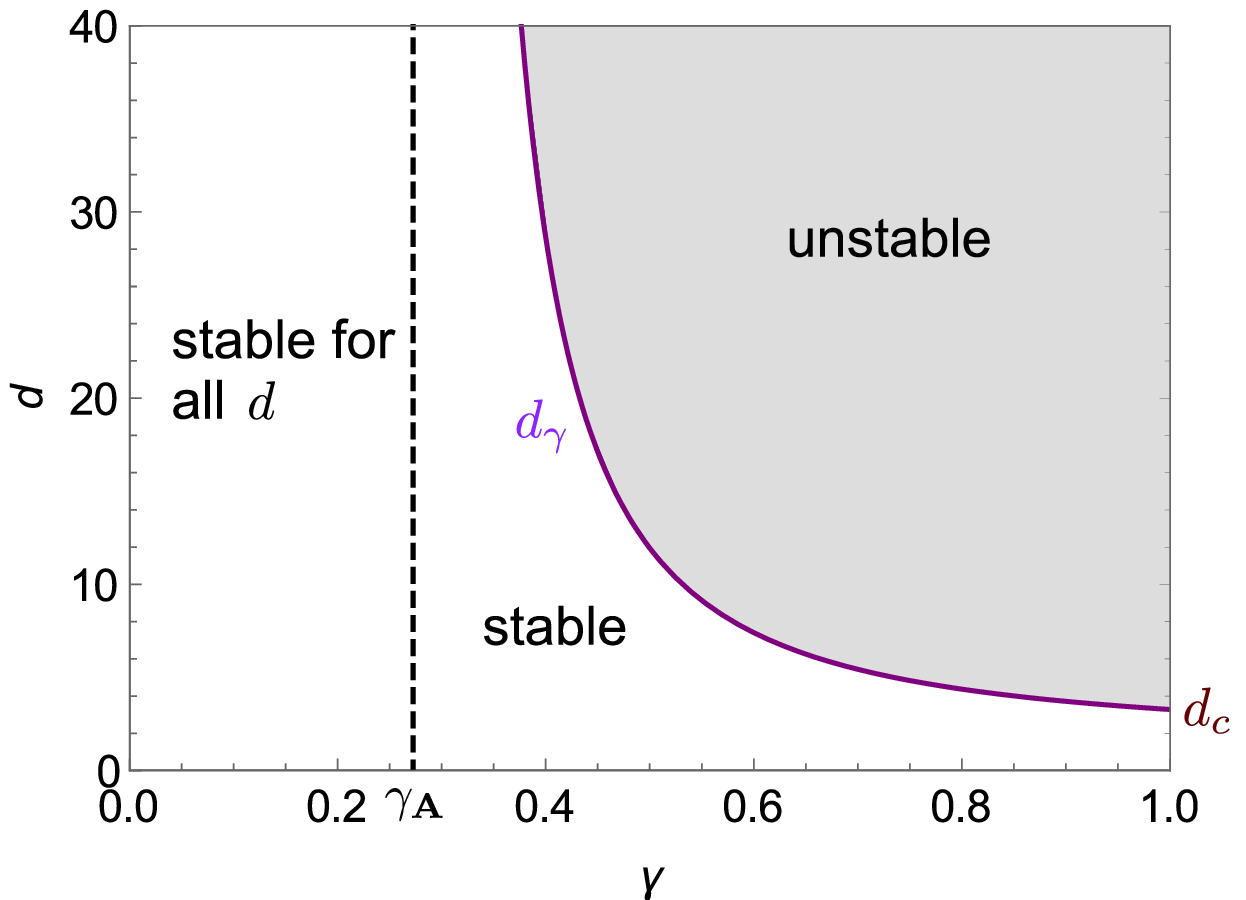}\label{f:instabilityregionmodel2negative}}
	\caption{Plotted are samples of stability and instability regions of spectra for \eqref{e:ModelB} near the origin in terms of the Turing threshold $d_\gamma$ (purple solid curves), which terminates at the regular threshold $d_c$ and lies above the minimum anomalous exponents $\gamma_\A $ (vertical dashed lines). (a) $\gamma_\A =0.69$, $d_c=19.798$ and (b) $\gamma_\A =0.27$, $d_c=3.28$.}
	\label{f:instabilityregionmodel2}
\end{figure}

\medskip
In Fig.~\ref{f:SpectrumComplexPlane-model2} we plot results with fixed diffusion coefficient $d$ and different anomalous exponents $\gamma$. As predicted, the subdiffusion (pseudo-)spectrum approaches the regular one as $\gamma$ increases towards $1$. Remark that in case \textbf{nr} a gap within the pseudo-spectra  occurs as a result of solutions to \eqref{e:FracDRModelB2} outside $\Omega_\ca$.
\begin{figure}[t]
	\centering
	\subfloat[$\gamma=4/5$]{\includegraphics[width=0.3\linewidth]{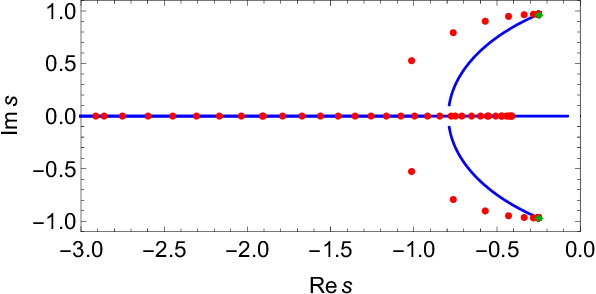}}
	\hfil
	\subfloat[$\gamma=9/10$]{\includegraphics[width=0.3\linewidth]{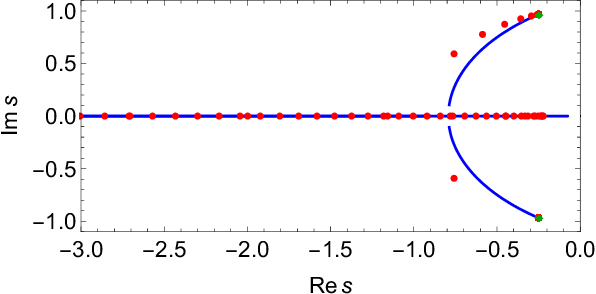}}
	\hfil
	\subfloat[$\gamma=19/20$]{\includegraphics[width=0.3\linewidth]{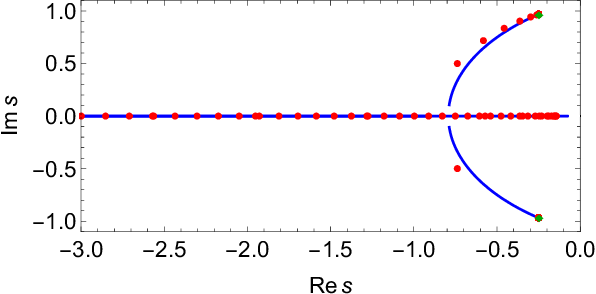}}
	\hfil
	\subfloat[$\gamma=4/5$]{\includegraphics[width=0.3\linewidth]{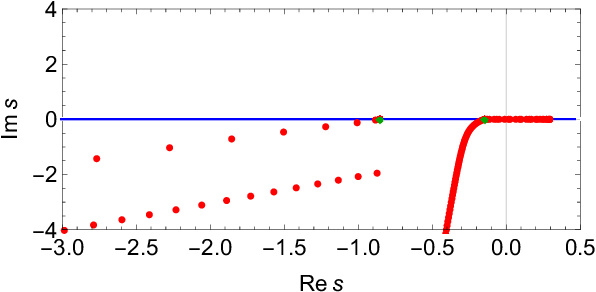}}
	\hfil
	\subfloat[$\gamma=9/10$]{\includegraphics[width=0.3\linewidth]{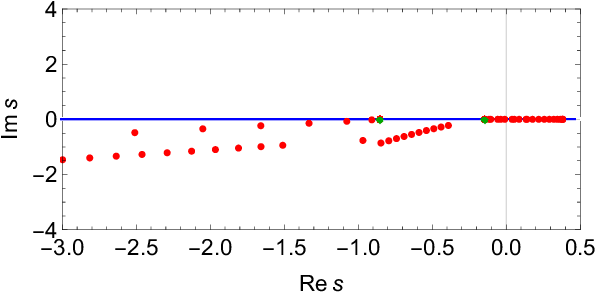}}
	\hfil
	\subfloat[$\gamma=19/20$]{\includegraphics[width=0.3\linewidth]{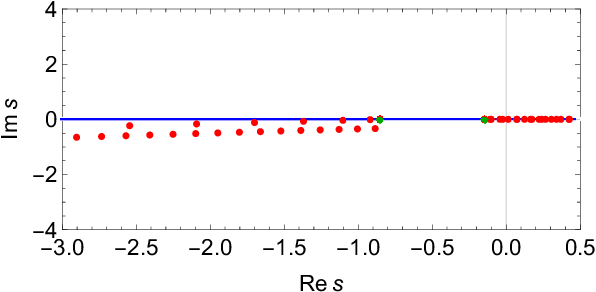}}
	\caption{Plotted are (pseudo-)spectra of \eqref{e:ModelB} for $\A=\A_{\cc},\A_{\nr}$ for wavenumbers $q = 0.04n,\,n=0,1,\dots,100$: regular diffusion ($\gamma=1$, blue solid lines) and subdiffusion $(\gamma\in(0,1)$, red dotted lines), and the eigenvalues of $\A$ (green diamonds). Top row: $\A=\A_{\cc}$ (case \textbf{cc}), $d=15$; bottom row: $\A=\A_{\nr}$ (case \textbf{nr}), $d=20$.}
	\label{f:SpectrumComplexPlane-model2}
\end{figure}

\medskip
In Fig.~\ref{f:complexspectrum} we plot results with fixed anomalous exponent $\gamma$ and different diffusion coefficients $d$ for case \textbf{cc}. As predicted, the subdiffusion spectrum becomes unstable when $d$ exceeds $d_\gamma$. Notably, the spectrum from \eqref{e:Taylorapproxdisprel} nicely approximates the numerical spectrum computed from \eqref{e:FracDRModelB2}. Furthermore, the subdiffusive transport stabilises the spectrum, since the maximum of the subdiffusion spectrum is less than that of the regular spectrum. The situation is 
similar for case \textbf{nr}, except the subdiffusion (pseudo-)spectrum in this region connects with the regular spectrum.
\begin{figure}[t]
	\centering
	\subfloat[$d=35$]{\includegraphics[width=0.3\linewidth]{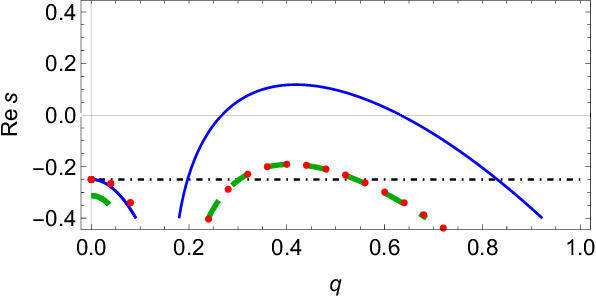}\label{f:complex-m5d35}}
	\hfil
	\subfloat[$d=d_\gamma\approx136.177$]{\includegraphics[width=0.3\linewidth]{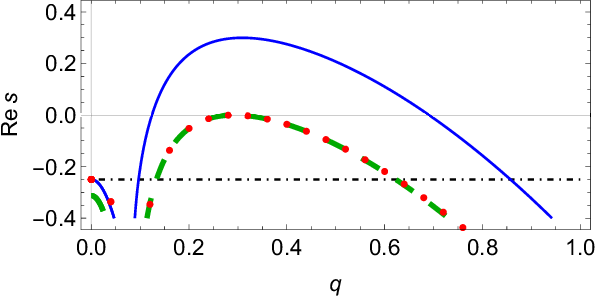}\label{f:complex-m5d136}}
	\hfil
	\subfloat[$d=250$]{\includegraphics[width=0.3\linewidth]{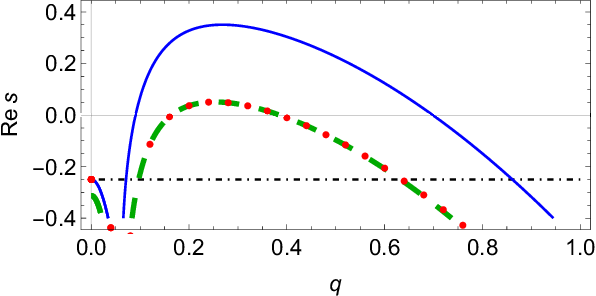}\label{f:complex-m5d250}}
	\caption{Plotted are (pseudo-)spectra of \eqref{e:ModelB} for $\A=\A_\cc$ and $\gamma=4/5$. Subdiffusion (pseudo-)spectra (red dotted) computed from \eqref{e:FracDRModelB}, regular diffusion spectra (blue solid), and approximate subdiffusion (pseudo-)spectra computed from \eqref{e:Taylorapproxdisprel} (green dashed). Horizontal dotted dashed lines are $s=\mu_1\notin\Omega_\ca$.}
	\label{f:complexspectrum}
\end{figure}

\appendix

\section{Wright function}\label{s:WrightFunction}

The Wright function is defined by the series~\cite[Section 1.11]{Kilbas2006}
\begin{equation*}
\phi(a,b;z):=\sum_{n=0}^{\infty}\frac{1}{\Gamma(an+b)}\frac{z^n}{n!},\quad a,b,z\in\CC .
\end{equation*}
If $a>-1$, this series is absolutely convergent for all $z\in\CC $ and it is an entire function of $z$. Hence, this series is uniformly convergent within $\left|z\right|<R$, where $R$ is any positive constant. Concerning the Green's function \eqref{e:GreensFunction}, we have, with $\mu:=\gamma/2\in(0,1/2)$, 
\begin{equation*}
\Phi(x,t) = \frac{1}{\sqrt{4d t^\gamma}}\sum_{n=0}^{\infty}\frac{(-1)^n}{n!\Gamma\left(1-\frac{\gamma}{2}-\frac{\gamma}{2}n\right)}\left(\frac{|x|}{\sqrt{d t^\gamma}}\right)^n = \frac{t^{-\mu}}{2\sqrt{d}}\phi\left(-\mu,1-\mu;-\frac{|x|}{\sqrt{d}}t^{-\mu}\right),
\end{equation*}
and $\Phi(x,t)>0$ for any $t>0, x\in \RR$ \cite[eq.~4.26-a]{Mainardi2001}. Since the Wright function in $\Phi(x,t)$ is uniformly convergent for $t>(|x|/(R\sqrt{d}))^{1/\mu}$, we can interchange the limit and sum for compact set of $x$, yields 
\begin{align*}
	& \lim_{t\to\infty}\frac{\Phi(x,t)}{t^{-\mu}} = \lim_{t\to\infty}\frac{1}{2\sqrt{d}}\phi\left(-\mu,1-\mu;-\frac{|x|}{\sqrt{d}}t^{-\mu}\right) \\
	&\quad = \frac{1}{2\sqrt{d}}\left(\frac{1}{\Gamma(1-\mu)} + \sum_{n=1}^{\infty}\lim_{t \to \infty}\frac{(-1)^n}{n!\Gamma\left(1-\frac{\gamma}{2}-\frac{\gamma}{2}n\right)}\left(\frac{|x|}{\sqrt{d t^\gamma}}\right)^n\right) = \frac{1}{2\sqrt{d}\Gamma(1-\mu)},
\end{align*}
so the Green's function $\Phi(x,t)\sim (2\sqrt{d}\Gamma(1-\mu)t^{\mu})^{-1}$ is algebraically decaying locally uniformly in $x$ for $t\to\infty$. Moreover, $\Phi(x,t)$ has the following asymptotic representation for $|x|/(\sqrt{d}t^{\mu})\to\infty$~\cite[eq.~4.27]{Mainardi2001},
\begin{gather*}
	\Phi(x,t) \sim \frac{t^{-\mu}}{2\sqrt{d}}A_0 Y^{\mu-1/2}\exp(-Y),\\
	A_0=(\sqrt{2\pi}(1-\mu)^\mu \mu^{2\mu-1})^{-1},\quad Y=(1-\mu)(\mu^\mu|x|t^{-\mu}/\sqrt{d})^{1/(1-\mu)}.
\end{gather*}
Therefore, $\Phi(x,t)$ is algebraically decaying in $t$ for $|x|\gg\sqrt{d}t^{\mu}$ with power $-\mu/(2-2\mu)\in(-\mu,0)$.

\section{Proof of Theorem \ref{t:ILT-ss}}\label{s:InverseLaplaceZeroBranchPoint}

This proof relies on calculating the inverse Laplace transform (ILT) with zero branch point, but where no roots of the dispersion relation are on the branch point at the origin.

\medskip
For any fixed $q$, the Fourier-Laplace solution $\fL\hat\u=(\fL\hat u_1,\fL\hat u_2)$ of \eqref{e:FracLinearSystem} can be written as 
\begin{align}\label{e:LaplaceFourierKernel}
\fL\hat u_1(q,s) = \frac{(s+ s^{\ell/m}d q^2 -a_4)\hu_{10} + a_2\hu_{20}}{(s+s^{\ell/m} q^2- a_{1})(s+s^{\ell/m} dq^2- a_{4})- a_{2}a_{3}}  =: \Psi(s),
\end{align}
cf.\ \cite{Henry2005}, with the initial conditions $\hu_1(q,0) = \hu_{10}(q)$ and $\hu_2(q,0) = \hu_{20}(q)$.
It suffices to discuss $\fL\hat u_1(q,s)$ and we suppress the subscript for convenience.

The denominator of \eqref{e:LaplaceFourierKernel} is the subdiffusion dispersion relation \eqref{e:ModelADRExtend}, where $\ell/m=\delta=1-\gamma$, $\ell<m$, and $\ell,\,m\in\mathbb{N}$ without loss coprime. Here $z\mapsto\Psi(z^m)$ is rational with denominator of degree $2m$ thus giving $2m$ poles $z_j\neq0, j=1,\ldots, 2m$, which corresponds to poles $\xi_j, j=1,\dots, 2m$ in the $s$-plane.

\medskip
First, we assume that all poles are simple. 
The formula of $\hat u(q,t)$ is given by
\begin{align*}
\hat{u}(q,t) =\frac{1}{2\pi i}\int_{c-i\infty}^{c+i\infty}\Psi(s)e^{st}\dif s,
\end{align*}
where $c>0$ is chosen to the right of the finitely many singularities. For the ILT of $\Psi$, we take a modified Bromwich contour with branch cut $\BC_{0}^{\theta_1}$, cf.\ Fig.~\ref{f:ILTModelA}, so that 
\begin{align*}
&\hat u(q,t) =\frac{1}{2\pi i}\int_{c-i\infty}^{c+i\infty}\Psi(s)e^{st}\dif s=\frac{1}{2\pi i}\lim_{T\to\infty}\int_{IA}\Psi(s)e^{st}\dif s\\
&\quad =\frac{1}{2\pi i}\Bigg(\lim_{\begin{subarray}{c}
	R\to\infty\\
	\epsilon\to0
	\end{subarray}}\oint_\Gamma-\lim_{R\to\infty}\int_{ABCD}-\lim_{\begin{subarray}{c}
	R\to\infty\\
	\epsilon\to0
	\end{subarray}}\int_{DE+FG}-\lim_{\epsilon\to0}\int_{EF}-\lim_{R\to\infty}\int_{GHI}\Bigg)\Psi(s)e^{st}\dif s.
\end{align*}
\begin{figure}[t]
	\centering
	\includegraphics[width=0.45\linewidth]{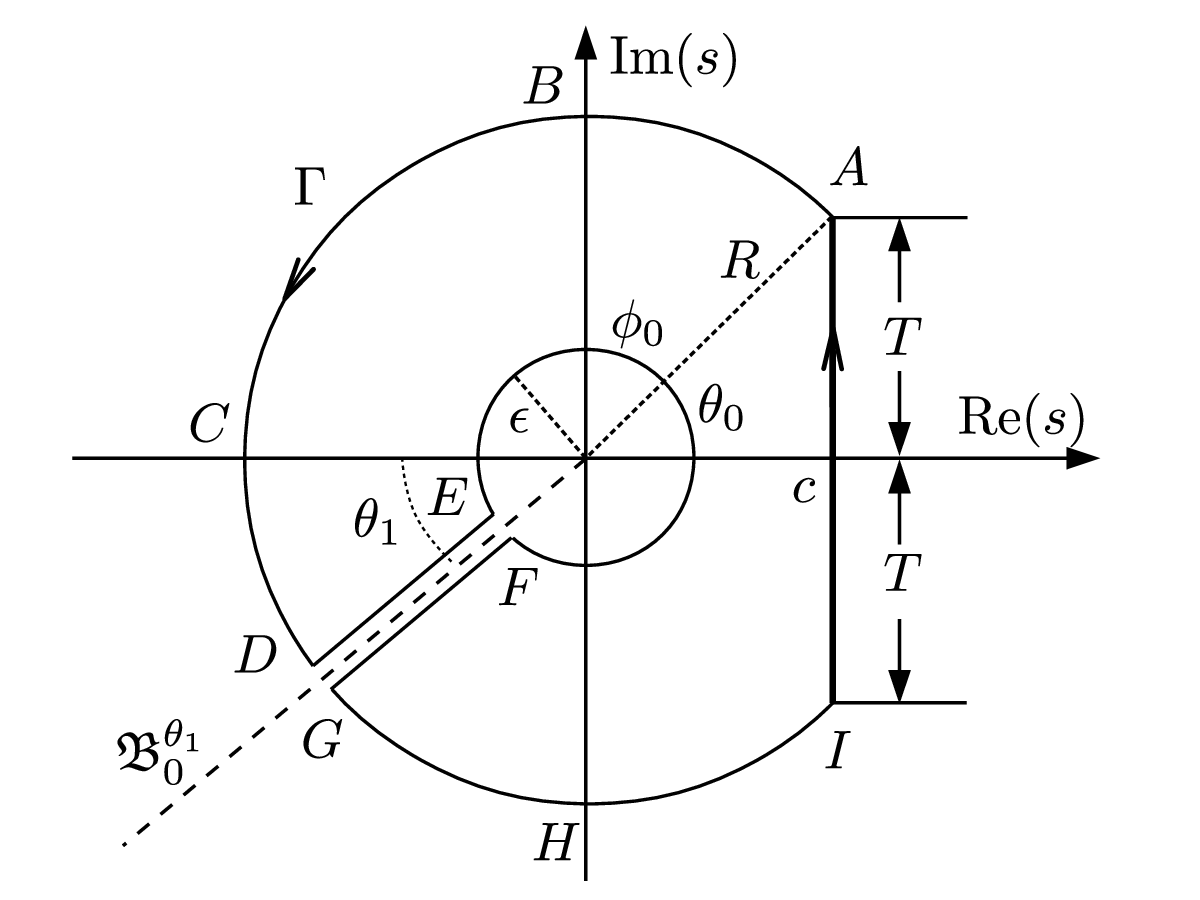}
	\caption{Notation and geometry of the integration contours.}
	\label{f:ILTModelA}
\end{figure}

We split the integration along the closed contour $\Gamma$ as follows.

\medskip
\noindent\underline{Along $EF$}: With $s=\epsilon e^{i\theta}$, $\dif s=i\epsilon e^{i\theta}\dif \theta$, $\lim_{\epsilon\to0}\Psi(\epsilon e^{i\theta}) = \frac{a_2 \hat u_{20} - a_4 \hat u_{10}}{a_1 a_4 - a_2 a_3}$ independent of $\theta$, we have
	\begin{equation*}
	\frac{1}{2\pi i}\lim_{\epsilon\to0}\int_{EF}\Psi(s)e^{st}\dif s=\frac{1}{2\pi i}\lim_{\epsilon\to0}\int_{\pi+\theta_1}^{-\pi+\theta_1}\Psi\left(\epsilon e^{i\theta}\right)e^{\epsilon e^{i\theta}t}\cdot i\epsilon e^{i\theta}\dif \theta=0.
	\end{equation*}
	
\noindent\underline{Along $ABCD$ and $GHI$}: We claim $\left|\Psi\left(Re^{i\theta}\right)\right|\leq M/R$ for a constant $M>0$ independent of $R$. Substituting $s=Re^{i\theta}$ into $\Psi(s)$ yields
\begin{align*} 
\left|\Psi(Re^{i\theta})\right| &= \left|\frac{(Re^{i\theta} + (Re^{i\theta})^{\ell/m}d q^2 -a_4)\hu_{10} + a_2\hu_{20}}{(Re^{i\theta} + (Re^{i\theta})^{\ell/m} q^2- a_{1})(Re^{i\theta} + (Re^{i\theta})^{\ell/m} dq^2- a_{4})- a_{2}a_{3}}\right|\\
&\leq \left|\frac{R}{R^2}\right| \left|\frac{(e^{i\theta} + R^{\ell/m-1}e^{i\theta\ell/m}d q^2 - R^{-1}a_4)\hu_{10} + R^{-1}a_2\hu_{20}}{(e^{i\theta} + R^{\ell/m-1}e^{i\theta\ell/m} q^2- R^{-1}a_{1})(e^{i\theta} + R^{\ell/m-1}e^{i\theta\ell/m} dq^2- a_{4})- R^{-2}a_{2}a_{3}}\right|\\
&\leq R^{-1}M,
\end{align*}
since the second term in the first inequality is continuous in $R$ and has a limit as $R\to\infty$.
It follows that $\lim_{R\to\infty}\int_{ABCD}\Psi(s)e^{st}\dif s = 0$ and $\lim_{R\to\infty}\int_{GHI}\Psi(s)e^{st}\dif s = 0$, cf.\ \cite[Theorem 7-1]{Spiegel1965}.

\medskip
\noindent\underline{Along $DE$ and $FG$}: $s=re^{i(\pi+\theta_1)}=-re^{i\theta_1}$, $s^{1/m}=r^{1/m}e^{i(\pi+\theta_1)/m}$, $\dif s=-e^{i\theta_1}\dif r$, yields
\begin{align*}
\frac{1}{2\pi i}\lim_{\begin{subarray}{c}
	R\to\infty\\
	\epsilon\to0
	\end{subarray}}\int_{DE}\Psi(s)e^{st}\dif s = \frac{e^{i\theta_1}}{2\pi i}\lim_{\begin{subarray}{c}
	R\to\infty\\
	\epsilon\to0
	\end{subarray}}\int_\epsilon^R\Psi\left(re^{i(\pi+\theta_1)}\right)e^{-re^{i\theta_1}t}\dif r=:I_{DE}.
\end{align*}

Analogously, $s=re^{i(-\pi+\theta_1)}=-re^{i\theta_1}$, $s^{1/m}=r^{1/m}e^{i(-\pi+\theta_1)/m}$, $\dif s=-e^{i\theta_1}\dif r$, yields
\begin{align*}
\frac{1}{2\pi i}\lim_{\begin{subarray}{c}
	R\to\infty\\
	\epsilon\to0
	\end{subarray}}\int_{FG}\Psi(s)e^{st}\dif s = -\frac{e^{i\theta_1}}{2\pi i}\lim_{\begin{subarray}{c}
	R\to\infty\\
	\epsilon\to0
	\end{subarray}}\int_\epsilon^R\Psi\left(re^{i(-\pi+\theta_1)}\right)e^{-re^{i\theta_1}t}\dif r=:I_{FG}.
\end{align*}
Combining $I_{DE}$ with $I_{FG}$ gives
\begin{align*}
I_{DE}+I_{FG}  = \frac{e^{i\theta_1}}{2\pi i}\lim_{\begin{subarray}{c}
	R\to\infty\\
	\epsilon\to0
	\end{subarray}}\int_\epsilon^R f(r)e^{-re^{i\theta_1}t}\dif r,
\end{align*}
where $f(r):=\Psi\left(re^{i(\pi+\theta_1)}\right)-\Psi\left(re^{i(-\pi+\theta_1)}\right)$ and $\lim_{r\to0}f(r)=0=\lim_{r\to\infty}f(r)$. Recall $\theta_1=\theta_1(q)$. Since there is no pole on $\BC_0^{\theta_1}$ for any fixed $q$, $f(r)$ is bounded, i.e., $\left|f(r)\right|\leq G_{\gamma}$ for $r\in[0,\infty)$, and a constant $G_{\gamma}>0$ for fixed $\gamma$. Hence,
\begin{align*}
\left|I_{DE}+I_{FG}\right| \leq
\frac{1}{2\pi}\lim_{\begin{subarray}{c}
	R\to\infty\\
	\epsilon\to0
	\end{subarray}}\int_{\epsilon}^{R}G_{\gamma} e^{-rt\cos(\theta_1)}\dif r=\frac{G_{\gamma}}{2\pi t\cos(\theta_1)}
	=\cO(t^{-1}).
\end{align*}
We refine this estimate as follows. For any $r_0>0$ we have $\left|\int_{r_0}^\infty f(r)e^{-re^{i\theta_1}t}\dif r\right|\leq Ct^{-1}e^{-r_0 t\cos\theta_1}$ for a constant $C>0$. Taylor expansion in $r=0$ gives 
$f(r) =  A_1 r^{\ell/m} + \cO(r^{(\ell+1)/m})$ with
\begin{gather*}
A_1:= \left(\frac{q^2 \left((-a_4^2 - a_2 a_3 d) \hu_{10} + (a_2 a_4 + 
   a_1 a_2 d) \hu_{20}\right)}{(a_1 a_4-a_2 a_3)^2}\right) e^{i\theta_1\ell/m}2i\sin(\pi\ell/m).
\end{gather*}
Therefore, 
\begin{equation}\label{e:asyInt}
\int_0^{r_0} f(r)e^{-re^{i\theta_1}t}\dif r = \int_0^{r_0} A_1 r^{\ell/m} e^{-re^{i\theta_1}t}\dif r + 
\cO\left(\int_0^{r_0} r^{(\ell+1)/m} e^{-re^{i\theta_1} t}\dif r\right). 
\end{equation}
For any $\alpha, \beta$ with $\alpha>-1, \Re(\beta)>0$ we have, as $t\to\infty$, that 
\begin{align*}
\int_0^{r_0}r^\alpha e^{-\beta r t}\dif r &= (\beta t)^{-1-\alpha}\int_0^{\beta r_0 t} w^\alpha e^{-w}\dif w = (\beta t)^{-1-\alpha}\left(\int_0^\infty w^\alpha e^{-w} \dif w - \int_{\beta r_0 t}^\infty w^\alpha e^{-w} \dif w\right) \\
& = (\beta t)^{-1-\alpha}\Gamma(1+\alpha) + \cO(t^{-1} e^{-\beta r_0 t}).
\end{align*}
Application to the right-hand side of \eqref{e:asyInt} and combination with the previous gives
\begin{align}\label{e:bcestimate}
I_{DE}+I_{FG} = \frac{A_1 e^{-i\theta_1\ell/m}}{2\pi i}\Gamma(1+\ell/m) t^{-1-\ell/m} + \cO(t^{-1-(\ell+1)/m}).
\end{align}
\begin{remark}\label{r:nonzero-ss}
In case of $\hu_2(q,t)$, the coefficient $A_2$ which plays the same role as $A_1$ in the leading order expansion is given by
(note $a_1 a_4>a_2 a_3$ by assumption)
\[
A_2:= \left(\frac{q^2 \left( (a_3 a_4 + 
   a_1 a_3 d) \hu_{10} -(a_1^2 d + a_2 a_3) \hu_{20} \right)}{(a_1 a_4-a_2 a_3)^2}\right) e^{i\theta_1\ell/m}2i\sin(\pi\ell/m).
\]
Both $A_1$ and $A_2$ vanish if and only if the initial condition lies in the kernel of the matrix
\begin{align*}
\begin{pmatrix}
- a_4^2 - a_2 a_3 d &  a_2 a_4 + a_1 a_2 d\\
 a_3 a_4 + a_1 a_3 d & - a_1^2 d - a_2 a_3
\end{pmatrix},
\end{align*}
whose determinant $(a_1a_4-a_2a_3)^2d$ is positive by assumption so that $(A_1,A_2)\neq0$ for non-trivial initial data, and the stable spectrum leads to algebraic decay as $t^{-1-\ell/m}$.

In addition, for normal diffusion we have $\ell=0$, so that $A_1=A_2=0$, consistent with the exponential decay that occurs in this case. Indeed, all the terms in the power series must have a factor $\sin(\pi k\ell/m)$ for some integers $k$ so that $I_{DE}+I_{FG} = 0$ for $\ell=0$, even though we do not explicitly compute this here. Note that $I_{DE}+I_{FG}=0$ for $\ell=0$ since there is no branch point or branch cut in case of normal diffusion.
\end{remark}

\medskip	
\noindent\underline{Along $\Gamma$:} If $\xi_j\notin\Omega_0$, i.e., there is no pole inside $\Gamma$ for any $\epsilon,R$, then Cauchy's integral theorem gives $\frac{1}{2\pi i}\lim_{R\to\infty,\epsilon\to0}\oint_\Gamma\Psi(s)e^{st}\dif s=0$ and for $\xi_j\in\Omega_0$, the residue theorem gives,
	\begin{align*}
	\frac{1}{2\pi i}\lim_{\begin{subarray}{c}
		R\to\infty\\
		\epsilon\to0
		\end{subarray}}\oint_\Gamma\Psi(s)e^{st}\dif s & = \sum_{\xi_j\in\Omega_0}\Res_{\xi_j}\left(\Psi(s)e^{st}\right) = \sum_{\xi_j\in\Omega_0}\left(\lim_{s\to\xi_j}(s-\xi_j)\Psi(s)\right)e^{\xi_j t}.
	\end{align*}
	
\medskip
All in all we infer the solution in Fourier space can be written as
\begin{align}
\hat{u}(q,t) &= I_\Gamma - (I_{DE}+I_{FG})\nonumber \\
&= \sum_{\xi_j\in\Omega_0}\left(\lim_{s\to\xi_j}(s-\xi_j)\Psi(s)\right)e^{\xi_j t} - \frac{A_1 e^{-i\theta_1\ell/m}}{2\pi i}\Gamma(1+\ell/m) t^{-1-\ell/m} + \cO(t^{-1-(\ell+1)/m}) \nonumber\\
& = C_{\exp,s_1} e^{s_1 t} + C_{\alg}\, t^{\gamma-2} + \cO(t^{-1-(\ell+1)/m}).\label{e:ILTFormulaSimplePole}
\end{align}
Here $s_1:=\mathrm{argmax}\{\Re(\xi_j): \xi_j\in \Omega_0,\, C_{\exp,s_1}\neq0\}$ for the largest exponential rate,
$C_{\exp,s_1} \neq 0$ for almost all initial conditions.
Specifically, the coefficients are given by
\begin{align*}
C_{\exp,s_1} &:= \lim_{s\to s_1} (s-s_1)\Psi(s),\\
C_{\alg} &:= - \frac{q^2 \left((-a_4^2 - a_2 a_3 d) \hu_{10} + (a_2 a_4 + 
   a_1 a_2 d) \hu_{20}\right)}{(a_1 a_4-a_2 a_3)^2} \frac{\sin(\pi\ell/m)}{\pi}\Gamma(1+\ell/m),
\end{align*}
which depends on $q$, $\ell$ and $m$.

\medskip
Second, we consider the case which poles are not all simple. As for simple poles, the integral along each arc tends to $0$ as $\epsilon\to0$ and $R\to\infty$ and \eqref{e:bcestimate} holds.
In contrast to simple poles, for $\xi_j\in\Omega_0$, 
\begin{align*}
I_\Gamma &= \sum_{\xi_j\in\Omega_0}\Res_{\xi_j}\left(\Psi(s)e^{st}\right) = \sum_{\xi_j\in\Omega_0}\left(\frac{1}{(k_j-1)!}\lim_{s\to\xi_j}\left( \frac{\dif}{\dif s} \right)^{k_j-1}\left( (s-\xi_j)^{k_j} \Psi(s)e^{s t}\right)\right)\\
&= \sum_{\xi_j\in\Omega_0}\left(\frac{1}{(k_j-1)!}\left(\lim_{s\to\xi_j} \sum_{k=0}^{k_j-1} \frac{t^k (k_j-1)!}{k!(k_j-1-k)!} \left(\frac{\dif}{\dif s}\right)^{k_j-1-k}\left((s-\xi_j)^{k_j}\Psi(s)\right) \right) e^{\xi_j t} \right)\\
&= C_{\exp,s_0,\rho} t^{\rho-1} e^{s_0 t} + \cO(t^{\rho-2} e^{s_0 t}).
\end{align*}
Hence, if $\Re(s_0)\geq 0$, then $\hu$ behaves exponentially as $t^{\rho-1}e^{s_0 t}$, where
\[s_0 := \argmax\{\Re(\xi_j):\xi_j\in\Omega_0,\,C_{\exp,s_0,\rho}\neq0\}\] has the largest exponential rate and
$\rho$ is the multiplicity of $s = s_0$. In particular,
\begin{align}\label{e:multicoeff-ss}
C_{\exp,s_0,\rho} := \frac{1}{(\rho-1)!} \lim_{s\to s_0}(s-s_0)^\rho \Psi(s).
\end{align} 
If $\Re(s_0)<0$, then $\hu$ decays as $t^{\gamma-2}$.

\section{Proof of Lemma \ref{l:AsymptoticSolution}}\label{s:ProofAsymptotic}

We recall \eqref{e:FracDR}, i.e., $D_\ss(s,q^2)=0$. Rescaling $q=\kappa/\varepsilon$ and $s=x/\varepsilon^r$, where $0<\varepsilon\ll1$ and $r>0$, substitution into \eqref{e:FracDR} and balancing the powers of $\varepsilon$ gives $r=2/(1-\delta)$. The dispersion relation becomes
\begin{align}\label{e:RescaledFracDR}
(x + x^\delta \kappa^2 - \varepsilon^{2/(1-\delta)} a_{1}) (x + x^\delta d \kappa^2 - \varepsilon^{2/(1-\delta)} a_{4}) - \varepsilon^{\frac{4}{1-\delta}} a_{2} a_{3} = 0.
\end{align}
and we seek solutions of the form 
\begin{equation}\label{e:SolExpansion}
x = x_0 + \varepsilon^\alpha x_1 + \cO(\varepsilon^\beta),\quad \beta>\alpha>0.
\end{equation}
Substitution into \eqref{e:RescaledFracDR} yields the expansion $(x_0+\varepsilon^\alpha x_1+\cO(\varepsilon^\beta))^\delta=x_0^\delta+\delta x_0^{\delta-1}(\varepsilon^\alpha x_1+\cO(\varepsilon^\beta))+\cO(\varepsilon^{2\alpha})$ for $x_0\neq0$ so that ordering powers of $\varepsilon$ gives 
\begin{align*}
& (x_0+x_0^\delta \kappa^2)(x_0+x_0^\delta d\kappa^2)\\
& +\varepsilon^\alpha\left[(x_0+x_0^\delta \kappa^2)\left(x_1+\delta x_0^{\delta-1}x_1d\kappa^2\right)+(x_0+x_0^\delta d\kappa^2)\left(x_1+\delta x_0^{\delta-1}x_1\kappa^2\right)\right]\\
& -\varepsilon^{\frac{2}{1-\delta}}\left[a_{1}(x_0+x_0^\delta d\kappa^2)+a_{4}(x_0+x_0^\delta \kappa^2)\right]\\
& +\varepsilon^{2\alpha}\left(x_1+\delta x_0^{\delta-1}x_1\kappa^2\right)\left(x_1+\delta x_0^{\delta-1}x_1d\kappa^2\right)\\
& -\varepsilon^{\alpha+\frac{2}{1-\delta}}\left[a_{1}\left(x_1+\delta x_0^{\delta-1}x_1d\kappa^2\right)+a_{4}\left(x_1+\delta x_0^{\delta-1}x_1\kappa^2\right)\right]\\
& +\varepsilon^{\frac{4}{1-\delta}}\left(a_{1}a_{4}-a_{2}a_{3}\right)\\
& +\cO(\varepsilon^\beta)(2x_0+x_0^\delta \kappa^2+x_0^\delta d\kappa^2)+\cO(\varepsilon^{\alpha+\beta} + \varepsilon^{3\alpha}) = 0.
\end{align*}
Let us compare coefficients by orders of $\varepsilon$:

\smallskip
\underline{$\cO (1)$}: $(x_0+x_0^\delta \kappa^2)(x_0+x_0^\delta d\kappa^2)=0$, solutions are $x_{01}=(-\kappa^2)^{1/(1-\delta)}$ or $x_{02}=(-d\kappa^2)^{1/(1-\delta)}$.

\smallskip
\underline{$\cO (\varepsilon^{\alpha})$}: For $x_{01}=(-\kappa^2)^{1/(1-\delta)}$, balancing the coefficients of order $\cO (\varepsilon^{2/(1-\delta)})$ and $\cO (\varepsilon^\alpha)$ yields $\alpha = 2/(1-\delta)$, and substituting $x_{01}$ into the term of order $\cO (\varepsilon^\alpha)$ gives $x_{11}=a_{1}/(1-\delta)$. Combining the scaling $s=x/\varepsilon^r$ with the expansion \eqref{e:SolExpansion} gives the approximation
\begin{align*}
s_{\infty 1} = (-q^2)^{1/(1-\delta)}+\frac{a_{1}}{1-\delta}+\cO(q^{\alpha-\beta}),
\end{align*}
Since $\arg((-q^2)^{1/(1-\delta)}) = \pi/(1-\delta)$, the real part $\Re((-q^2)^{1/(1-\delta)})<0$ if $\pi/(1-\delta) \in (\pi/2, \pi+\theta_1)$, i.e., $\delta \in (0, \theta_1/(\pi+\theta_1))$. Hence, for any $\delta \in (0, \theta_1/(\pi+\theta_1))$, there exists $Q_1>0$ such that for $q>Q_1$ we have
\begin{align}\label{e:MinusInfinitySpectrumSupremum1}
\Re(s_{\infty 1}) = \Re\left(\left(-q^2\right)^{\frac{1}{1-\delta}}+\frac{a_{1}}{1-\delta}+\cO(q^{\alpha-\beta})\right) < \Re\left(\left(-Q_1^2\right)^{\frac{1}{1-\delta}}+\frac{a_{1}}{1-\delta}+1\right) < 0.
\end{align}
Similarly, for $x_{02}=(-d\kappa^2)^{1/(1-\delta)}$, the solution is given by
\begin{equation*}
s_{\infty 2} = \left(-dq^2\right)^{\frac{1}{1-\delta}}+\frac{a_{4}}{1-\delta}+\cO(q^{\alpha-\beta}).
\end{equation*}
For any $\delta \in (0, \theta_1/(\pi+\theta_1))$, there exists $Q_2>0$ such that for $q>Q_2$ we have
\begin{equation}\label{e:MinusInfinitySpectrumSupremum2}
\Re(s_{\infty 2}) = \Re\left(\left(-dq^2\right)^{\frac{1}{1-\delta}}+\frac{a_{4}}{1-\delta}+\cO(q^{\alpha-\beta})\right)  <\Re\left(\left(-dQ_2^2\right)^{\frac{1}{1-\delta}}+\frac{a_{4}}{1-\delta}+1\right) < 0.
\end{equation}
With $Q=\max\left\{Q_1,Q_2\right\}$ \eqref{e:MinusInfinitySpectrumSupremum1} and \eqref{e:MinusInfinitySpectrumSupremum2} both hold if $q>Q$. Finally, $\delta \in (0,\theta_1/(\pi+\theta_1))$ is the necessary condition for $s_{\infty 1}$ and $s_{\infty 2}$ to lie in $\Omega_0$, cf.\ Lemma \ref{l:InfinitySolutionExistence}.

\medskip	
In the case $x_0=0$, however, we transform the problem via $s^\delta=z \Rightarrow s=z^{1/\delta}$, and \eqref{e:FracDR} reads
\begin{equation*}
D_\ss(z^{1/\delta},q^2)=(z^{1/\delta}+z q^2- a_{1}) (z^{1/\delta}+z dq^2- a_{4})- a_{2}a_{3}=0.
\end{equation*}
Rescaling $q=\kappa/\varepsilon$, $z=y/\varepsilon^\mu$, balancing the power of $\varepsilon$ gives $\mu= 2\delta/(1-\delta)$, which leads to
\begin{equation}\label{e:RescaledDispersionRelationZ}
(y^{1/\delta}+y \kappa^2-\varepsilon^{2/(1-\delta)} a_{1}) (y^{1/\delta}+y d\kappa^2-\varepsilon^{2/(1-\delta)} a_{4})-\varepsilon^{4/(1-\delta)} a_{2}a_{3}=0
\end{equation}
and we seek solutions of the form
\begin{equation}\label{e:AsymptoticFormZ}
y=y_0+\varepsilon^\alpha y_1+o(\varepsilon^\alpha).
\end{equation}
Substituting into \eqref{e:RescaledDispersionRelationZ}, and using that the first derivative of $y^{\frac{1}{\delta}}$ exists for $y=0$, we have $(y_0+\varepsilon^\alpha y_1+o(\varepsilon^\alpha))^{1/\delta}=y_0^{1/\delta}+\frac{1}{\delta}y_0^{1/\delta-1}(\varepsilon^\alpha y_1+o(\varepsilon^\alpha))+o(\varepsilon^\alpha)$, which yields
\begin{align*}
& \left(y_0^{\frac{1}{\delta}}+y_0\kappa^2\right)\left(y_0^{\frac{1}{\delta}}+y_0d\kappa^2\right)\\
& +\varepsilon^\alpha\left[\left(y_0^{\frac{1}{\delta}}+y_0\kappa^2\right)\left(\frac{1}{\delta}y_0^{\frac{1}{\delta}-1}y_1+y_1d\kappa^2\right)+\left(y_0^{\frac{1}{\delta}}+y_0d\kappa^2\right)\left(\frac{1}{\delta}y_0^{\frac{1}{\delta}-1}y_1+y_1\kappa^2\right)\right]\\
& -\varepsilon^{\frac{2}{1-\delta}}\left[ a_{1}\left(y_0^\frac{1}{\delta}+y_0d\kappa^2\right)+ a_{4}\left(y_0^\frac{1}{\delta}+y_0\kappa^2\right)\right]\\
& +\varepsilon^{2\alpha}\left(\frac{1}{\delta}y_0^{\frac{1}{\delta}-1}y_1+y_1\kappa^2\right)\left(\frac{1}{\delta}y_0^{\frac{1}{\delta}-1}y_1+y_1d\kappa^2\right)\\
& -\varepsilon^{\alpha+\frac{2}{1-\delta}}\left[ a_{1}\left(\frac{1}{\delta}y_0^{\frac{1}{\delta}-1}y_1+y_1d\kappa^2\right)+ a_{4}\left(\frac{1}{\delta}y_0^{\frac{1}{\delta}-1}y_1+y_1\kappa^2\right)\right]\\
& +\varepsilon^{\frac{4}{1-\delta}}\left(a_{1}a_{4}-a_{2}a_{3}\right)\\
& +o(\varepsilon^\alpha)\left(2y_0^\frac{1}{\delta}+y_0\kappa^2+y_0d\kappa^2\right)+o\left(\varepsilon^{2\alpha}\right)+o\left(\varepsilon^{\alpha+\frac{2}{1-\delta}}\right)=0.
\end{align*}
In the present case of $x_0 =0$ we have $y_0=0$, so the coefficients of order $\cO (1)$, $\cO (\varepsilon^\alpha)$ and $\cO (\varepsilon^{2/(1-\delta)})$ vanish. 

\smallskip
\underline{$\cO (\varepsilon^{2\alpha})$}: Balancing the coefficients of order $\varepsilon^{2\alpha}$, $\varepsilon^{\alpha+2/(1-\delta)}$ and $\varepsilon^{4/(1-\delta)}$ yields $\alpha=2/(1-\delta)$, and combining the coefficients gives
\begin{equation*}
d\kappa^4y_1^2-( a_{1}d\kappa^2+ a_{4}\kappa^2)y_1+a_{1}a_{4}-a_{2}a_{3}=0,
\end{equation*}
which has some roots $y_{1\pm}$.
Combining $s=z^{1/\delta}$, $z=y/\varepsilon^\mu$ and \eqref{e:AsymptoticFormZ} gives the solution
\begin{equation*}
s_{0\pm}=z^{1/\delta}=(y/\varepsilon^\mu)^{1/\delta} = \varepsilon^{2/\delta}y_{1\pm}^{1/\delta} + o(\varepsilon^{2/\delta}).
\end{equation*}
Hence, $\lim_{\varepsilon\to0}s_{0\pm}=0$, which implies that $\lim_{|q|\to\infty}s_{0\pm}(q)=0$.

This completes proof of Lemma \ref{l:AsymptoticSolution}.

\section{Proof of Proposition \ref{p:ExistencePseudo}}\label{s:DiscussionLemmas}

We prove Proposition \ref{p:ExistencePseudo} by the following series of lemmas. In summary, Lemma \ref{l:RealPartNegativeLessThan1/3} and Lemma \ref{l:RealPartNegativeLargerThan1/3} give the stability threshold $d_\delta^\infty$; Lemma \ref{l:RealPartNegativeLessThan1/3}, Lemma \ref{l:DiffusionRatioLessThanNecessaryCondition} and Lemma \ref{l:ComplexSpectrumForSmalld} give the existence threshold $\widetilde{d}_{\delta}^\infty$.

\medskip
We recall  $\Delta:=4d(a_{1}a_{4}-a_{2}a_{3})-( a_{1}d+ a_{4})^2$, $b:= a_{1}d+ a_{4}$, $y_{1\pm}=(b\pm i\sqrt{\Delta})/(2d)=:\rho e^{\pm i\theta}$. Note $P_{\min}>0$ implies $\Delta>0$. 

We make a case distinction in terms of the sign of $b$.

\medskip
\underline{Case $b>0$}, i.e., $d>-\frac{a_{4}}{a_{1}}$.
\begin{lemma}\label{l:RealPartNegativeLessThan1/3}
	For any $\delta \in (0,\frac{\pi}{2(\pi+\theta_1)})$, there exists a $Q>0$ such that for any $|q|>Q$ we have $s_{0+}(q) \in \Omega_0^-$ if $\widetilde{d}_{\delta+}^\infty < d < d_{\delta}^\infty$, whereas $s_{0\pm}(q) \notin \Omega_0$ if $-\frac{a_{4}}{a_{1}} < d < \widetilde{d}_{\delta+}^\infty$, with $\widetilde{d}_{\delta+}^\infty$ the larger root of \eqref{e:ExistenceEqn}.
\end{lemma}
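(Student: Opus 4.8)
The plan is to reduce the claim to a statement about the argument of the root $y_{1+}=\rho e^{i\theta}$ of \eqref{e:ZeroSolHigherOrderFixk}. By \eqref{e:AsympoticZeroSol} in Lemma~\ref{l:AsymptoticSolution} (taking $\kappa=\pm1$, so $q=\pm1/\varepsilon$), one has $s_{0\pm}(q)=\varepsilon^{2/\delta}y_{1\pm}^{1/\delta}+o(\varepsilon^{2/\delta})$, so $s_{0\pm}(q)\to 0$ along a ray whose argument tends to $\arg(y_{1\pm}^{1/\delta})=\pm\theta/\delta$ as $|q|\to\infty$; since the $o$-term is negligible relative to a fixed nonzero direction, there is a $Q>0$ so that for $|q|>Q$ the inclusion of $s_{0\pm}(q)$ in $\Omega_0^-$, resp.\ its exclusion from $\Omega_0$, is governed by whether $\pm\theta/\delta$ lies strictly inside the corresponding open region. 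In the present case $b=a_1d+a_4>0$ we have $\theta=\arctan(\sqrt{\Delta}/b)\in(0,\pi/2)$, so it suffices to show: $\theta/\delta\in(\pi/2,\pi+\theta_1)$ iff $\widetilde d_{\delta+}^\infty<d<d_\delta^\infty$, and $\theta/\delta>\pi+\theta_1$ iff $-a_4/a_1<d<\widetilde d_{\delta+}^\infty$.

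I would establish this via two chains of equivalences. For the lower bound, $\theta>\pi\delta/2$ is, by the computation in the proof of Lemma~\ref{l:RealPartPositive}, equivalent to $H(d)<0$; since $H$ has positive leading coefficient and roots $d_-<d_\delta^\infty$ with $d_-<-a_4/a_1$, on the range $d>-a_4/a_1$ this is equivalent to $d<d_\delta^\infty$. For the upper bound, the hypothesis $\delta<\frac{\pi}{2(\pi+\theta_1)}$ gives $(\pi+\theta_1)\delta\in(0,\pi/2)$, so $\theta<(\pi+\theta_1)\delta\iff\sqrt{\Delta}/b<\tan((\pi+\theta_1)\delta)\iff\Delta<b^2\tan^2((\pi+\theta_1)\delta)$; moving terms and using $b^2=(a_1d+a_4)^2$, $\Delta=4d(a_1a_4-a_2a_3)-(a_1d+a_4)^2$ and $1+\tan^2=\sec^2$, this rearranges precisely to $\widetilde H(d)>0$ with $\widetilde H$ as in \eqref{e:ExistenceEqn}.

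It then remains to locate the roots of the quadratic $\widetilde H$ relative to $-a_4/a_1$ and $d_\delta^\infty$. Evaluating gives $\widetilde H(-a_4/a_1)=-\tfrac{4a_4}{a_1}(a_2a_3-a_1a_4)\cos^2((\pi+\theta_1)\delta)<0$, using $a_1>0>a_4$, the Turing condition $a_1a_4-a_2a_3>0$, and $\cos((\pi+\theta_1)\delta)\neq0$; hence $\widetilde d_{\delta-}^\infty<-a_4/a_1<\widetilde d_{\delta+}^\infty$. Comparing with $H$ from \eqref{e:RealPartPositives}, $\widetilde H(d)-H(d)=4(a_2a_3-a_1a_4)\bigl(\cos^2((\pi+\theta_1)\delta)-\cos^2(\pi\delta/2)\bigr)d>0$ for $d>0$, since $\pi\delta/2<(\pi+\theta_1)\delta<\pi/2$ and cosine decreases on $(0,\pi/2)$ while $a_2a_3-a_1a_4<0$; so $\widetilde H(d_\delta^\infty)>H(d_\delta^\infty)=0$, which with the previous forces $\widetilde d_{\delta+}^\infty<d_\delta^\infty$. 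Consequently, on $(-a_4/a_1,d_\delta^\infty)$ we have $\widetilde H(d)>0\iff d>\widetilde d_{\delta+}^\infty$. Assembling: for $\widetilde d_{\delta+}^\infty<d<d_\delta^\infty$ both $\pi\delta/2<\theta$ and $\theta<(\pi+\theta_1)\delta$ hold, so $\theta/\delta\in(\pi/2,\pi+\theta_1)$ and $s_{0+}(q)\in\Omega_0^-$ for $|q|>Q$; for $-a_4/a_1<d<\widetilde d_{\delta+}^\infty$ we get $\theta>(\pi+\theta_1)\delta$, so $|\arg(y_{1\pm}^{1/\delta})|=\theta/\delta>\pi+\theta_1$ and $s_{0\pm}(q)\notin\Omega_0$ for $|q|>Q$. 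The main obstacle is the bookkeeping of this web of equivalences between angular conditions on $\theta$ and sign conditions on the quadratics $H$ and $\widetilde H$, and in particular pinning down the ordering $\widetilde d_{\delta-}^\infty<-a_4/a_1<\widetilde d_{\delta+}^\infty<d_\delta^\infty$ purely from sign data; the passage from $y_{1\pm}^{1/\delta}$ to $s_{0\pm}(q)$ for large $|q|$ is then routine given Lemma~\ref{l:AsymptoticSolution}.
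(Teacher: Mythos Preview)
Your proof is correct and follows essentially the same route as the paper: reduce to the angular condition on $y_{1+}^{1/\delta}$, translate the bounds $\pi\delta/2<\theta<(\pi+\theta_1)\delta$ into the sign conditions $H(d)<0$ and $\widetilde H(d)>0$, and then read off the $d$-intervals. The one place you go beyond the paper is in explicitly verifying the ordering $\widetilde d_{\delta-}^\infty<-a_4/a_1<\widetilde d_{\delta+}^\infty<d_\delta^\infty$ via the sign checks $\widetilde H(-a_4/a_1)<0$ and $\widetilde H(d)>H(d)$ for $d>0$; the paper simply asserts $\widetilde d_{\delta+}^\infty>-a_4/a_1$ and leaves $\widetilde d_{\delta+}^\infty<d_\delta^\infty$ implicit, so your additional bookkeeping is a genuine (if minor) gain in rigor rather than a different method.
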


\begin{proof}
	$\Re\left(y_{1+}^{1/\delta}\right)<0$ if $\arg \left(y_{1+}^{1/\delta}\right)=\frac{\theta}{\delta}\in\left(\frac{\pi}{2},\pi+\theta_1\right)$. We remark that we do not discuss $y_{1-}^{1/\delta}$ here, since $y_{1+}^{1/\delta}$ and $y_{1-}^{1/\delta}$ are complex conjugate, $\arg \left(y_{1-}^{1/\delta}\right)=-\arg \left(y_{1+}^{1/\delta}\right) \in (-\pi-\theta_1,-\pi/2)$, which means if $y_{1-}^{1/\delta} \in \Omega_0$ then $y_{1+}^{1/\delta} \in \Omega_0$, but not vice versa, and if $y_{1+}^{1/\delta} \notin \Omega_0$, then $y_{1-}^{1/\delta} \notin \Omega_0$. 
	
	\smallskip
	We consider $\left(\frac{\pi\delta}{2},(\pi+\theta_1)\delta\right)\subset\left(0,\frac{\pi}{2}\right)$, i.e., $\delta<\frac{\pi}{2(\pi + \theta_1)}$. The condition $\frac{\theta}{\delta}\in(\frac{\pi}{2},\pi+\theta_1)$ leads to $\arctan(\sqrt{\Delta} / b) \in (\pi\delta/2,(\pi+\theta_1)\delta)$ which yields
	\[
	b^2\tan^2(\pi\delta/2) < \Delta < b^2\tan^2((\pi+\theta_1)\delta).
	\]
	Using $c:=\cos^2(\pi\delta/2)$, $\tilde c:=\cos^2((\pi+\theta_1)\delta)$, these two inequalities can be written as 
	\begin{gather}
	H(d):=a_{1}^2d^2+(4c(a_{2}a_{3}-a_{1}a_{4}) + 2a_{1}a_{4})d+a_{4}^2<0,\label{e:RealPartNegativec}\\
	\widetilde{H}(d):=a_{1}^2d^2+(4\tilde c(a_{2}a_{3}-a_{1}a_{4}) + 2a_{1}a_{4})d+a_{4}^2>0.\label{e:RealPartNegativectilde}
	\end{gather}
	With $d_-$ from the proof of Lemma~\ref{l:RealPartPositive}, the solutions of \eqref{e:RealPartNegativec} are $d\in(d_-,d_\delta^\infty)$ and the solutions of \eqref{e:RealPartNegativectilde} are $d>\widetilde{d}_{\delta+}^\infty$; here we omit $d<\widetilde{d}_{\delta-}^\infty$ as in the proof of Lemma~\ref{l:RealPartPositive}. Using $\widetilde{d}_{\delta+}^\infty>-\frac{a_{4}}{a_{1}}$, combining these solutions gives $\widetilde{d}_{\delta+}^\infty<d<d_\delta^\infty$. 
	
	\smallskip
	Whereas, $y_{1+}\notin\Sigma_0$ if $\theta \notin (\pi\delta/2,(\pi+\theta_1)\delta)$ so we infer $\theta > (\pi+\theta_1)\delta$. Since $\arg (y_{1+}) = \arctan(\sqrt{\Delta}/b)$, we have $\arctan(\sqrt{\Delta}/b) > (\pi+\theta_1)\delta$, which implies $\widetilde{d}_{\delta-}^\infty<d<\widetilde{d}_{\delta+}^\infty \Rightarrow -\frac{a_{4}}{a_{1}}<d<\widetilde{d}_{\delta+}^\infty$.
\end{proof}

\begin{lemma}\label{l:RealPartNegativeLargerThan1/3}
	For any $\delta \in[\frac{\pi}{2(\pi+\theta_1)},1)$, there exists a $Q>0$ such that for any $|q|>Q$ we have $s_{0+}(q) \in \Omega_0^-$ if $-\frac{a_{4}}{a_{1}}<d<d_{\delta}^\infty$.
\end{lemma}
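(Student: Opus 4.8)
The plan is to piggyback on the computation already carried out in the proof of Lemma~\ref{l:RealPartPositive}, the only change being that we place the relevant root of \eqref{e:ZeroSolHigherOrderFixk} in the sector $\Omega_0^-$ instead of in $\Omega_0^+$. We are in Case~2 with $b=a_1d+a_4>0$, so $y_{1\pm}=(b\pm i\sqrt\Delta)/(2d)=:\rho e^{\pm i\theta}$ with $\rho>0$ and $\theta=\arg(y_{1+})=\arctan(\sqrt\Delta/b)\in(0,\pi/2)$. By the expansion \eqref{e:AsympoticZeroSol}, writing $q=\pm1/\varepsilon$, we have $s_{0+}(q)=\varepsilon^{2/\delta}y_{1+}^{1/\delta}+o(\varepsilon^{2/\delta})$; since $\varepsilon^{2/\delta}>0$ and the correction is of strictly smaller order, $\arg(s_{0+}(q))\to\theta/\delta$ and $s_{0+}(q)\neq0$ as $|q|\to\infty$. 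Hence it suffices to determine the range of $d$ for which $\theta/\delta\in(\pi/2,\pi+\theta_1)$: for such $d$ the target sector $\Omega_0^-$ is open and $y_{1+}^{1/\delta}$ sits strictly in its interior with fixed argument, so $s_{0+}(q)\in\Omega_0^-$ for all $|q|$ beyond some $Q>0$ (chosen uniformly on compact subintervals of the admissible $d$-range).

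The first step is the upper bound $\theta/\delta<\pi+\theta_1$, i.e.\ $\theta<(\pi+\theta_1)\delta$. In the present regime $\delta\in[\tfrac{\pi}{2(\pi+\theta_1)},1)$ one has $(\pi+\theta_1)\delta\geq\pi/2>\theta$, so this holds automatically, and strictly even at the endpoint $\delta=\tfrac{\pi}{2(\pi+\theta_1)}$ since $\theta<\pi/2$ is strict. This is precisely the structural difference from Lemma~\ref{l:RealPartNegativeLessThan1/3}: the constraint $\widetilde{H}(d)>0$ of \eqref{e:ExistenceEqn} that produced the lower threshold $\widetilde{d}_{\delta+}^\infty$ there now drops out entirely, which is why no such threshold appears in the statement of Proposition~\ref{p:ExistencePseudo} for this range of $\delta$.

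The second step is the lower bound $\theta/\delta>\pi/2$, i.e.\ $\theta>\pi\delta/2$. Since $\pi\delta/2<\pi/2$, applying $\tan$ this is equivalent to $\sqrt\Delta/b>\tan(\pi\delta/2)$, hence to $\Delta>b^2\tan^2(\pi\delta/2)$, which rearranges verbatim as in the proof of Lemma~\ref{l:RealPartPositive} to $H(d)<0$ with $H$ from \eqref{e:RealPartPositives}. From that proof $H(d)<0$ precisely for $d\in(d_-,d_\delta^\infty)$ with $d_-<-a_4/a_1<d_\delta^\infty$; intersecting with the standing assumption $b>0$, i.e.\ $d>-a_4/a_1$, yields exactly $-\tfrac{a_4}{a_1}<d<d_\delta^\infty$. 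Combining the two steps, for $d$ in this interval $y_{1+}^{1/\delta}$ is nonzero with $\arg(y_{1+}^{1/\delta})=\theta/\delta$ strictly inside $(\pi/2,\pi+\theta_1)$, so by the limiting remark of the first paragraph $s_{0+}(q)\in\Omega_0^-$ for $|q|>Q$.

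I do not foresee a serious obstacle: the argument is a direct variant of the proofs of Lemma~\ref{l:RealPartPositive} and Lemma~\ref{l:RealPartNegativeLessThan1/3}. The only genuinely delicate point is the passage from the leading-order expansion \eqref{e:AsympoticZeroSol} to a claim valid for all large but finite $|q|$, which is handled by the openness of $\Omega_0^-$ and the fact that $y_{1+}^{1/\delta}$ has a fixed argument bounded away from the sector's boundary; the second, minor, point to watch is the endpoint $\delta=\tfrac{\pi}{2(\pi+\theta_1)}$, which causes no trouble because every inequality used there is strict.
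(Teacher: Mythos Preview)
Your proposal is correct and follows essentially the same approach as the paper: both use that $b>0$ forces $\theta<\pi/2$, observe that the hypothesis $\delta\ge\pi/(2(\pi+\theta_1))$ makes the upper constraint $\theta<(\pi+\theta_1)\delta$ automatic, and reduce the remaining lower constraint $\theta>\pi\delta/2$ to $H(d)<0$, yielding $-a_4/a_1<d<d_\delta^\infty$. Your write-up is in fact more explicit than the paper's (which leaves the vacuity of the upper bound and the passage from leading order to finite $|q|$ implicit), but there is no substantive difference in method.
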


\begin{proof}
Since $b>0$, we have $\theta = \arctan(\sqrt{\Delta}/b) < \pi/2$. In combination with the assumption $\theta \in (\pi\delta/2,(\pi+\theta_1)\delta)$, we get $\theta \in (\pi\delta/2,\pi/2) \Rightarrow \arctan(\sqrt{\Delta}/b) \in (\pi\delta/2,\pi/2)$ which yields
	\[
	\Delta^2 > b^2 \tan^2(\pi\delta/2).
	\]
	Hence
	\begin{equation*}
	H(d)=a_{1}^2d^2+(2a_{1}a_{4}+4c(a_{2}a_{3}-a_{1}a_{4}))d+a_{4}^2<0,
	\end{equation*}
	The solution is $d_-<d<d_{\delta}^\infty$. Note $d_-<-\frac{a_{4}}{a_{1}}$, thus we have $-\frac{a_{4}}{a_{1}}<d<d_\delta^\infty$.
\end{proof}

\medskip
\underline{Case $b<0$}, i.e., $d<-\frac{a_{4}}{a_{1}}$

\begin{lemma}\label{l:DiffusionRatioLessThanNecessaryCondition}
In each of the following, for any $\delta$ in the given interval there exists a $Q>0$ such that for any $|q|>Q$ the given statement holds.
\begin{itemize}
\item[(1)] For $[\frac{\pi}{\pi+\theta_1},1)$ we have $s_{0+}(q)\in\Omega_0^-$ if $d<-\frac{a_{4}}{a_{1}}$;
\item[(2)]	For $(0,\frac{\pi}{2(\pi+\theta_1)}]$ we have $s_{0\pm}(q)\notin\Omega_0$ if $d<-\frac{a_{4}}{a_{1}}$;
\item[(3)]	For $\left(\frac{\pi}{2(\pi+\theta_1)},\frac{\pi}{\pi+\theta_1}\right)$ we have $s_{0\pm}(q)\notin\Omega_0$ if $d<\widetilde{d}_{\delta-}^\infty$, and $s_{0+}(q)\in\Omega_0^-$ if $\widetilde{d}_{\delta-}^\infty < d < -\frac{a_{4}}{a_{1}}$, where $\widetilde{d}_{\delta-}^\infty$ is the smaller root of \eqref{e:ExistenceEqn}.
\end{itemize}
\end{lemma}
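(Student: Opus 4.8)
The plan is to read off the location of the asymptotic spectrum $s_{0\pm}(q)=\varepsilon^{2/\delta}y_{1\pm}^{1/\delta}+o(\varepsilon^{2/\delta})$ from \eqref{e:AsympoticZeroSol} for $|q|\gg1$, exactly as in the case $b>0$ of Lemma~\ref{l:RealPartNegativeLessThan1/3}, but now with $b=a_{1}d+a_{4}<0$, i.e.\ $d<-a_{4}/a_{1}$. Since we are in the case $P_{\min}>0$ (so $\Delta>0$), the roots $y_{1\pm}=(b\pm i\sqrt{\Delta})/(2d)=\rho e^{\pm i\theta}$ of \eqref{e:ZeroSolHigherOrderFixk} satisfy $\Re(y_{1+})=b/(2d)<0$ and $\Im(y_{1+})>0$, so $\theta=\arg(y_{1+})\in(\pi/2,\pi)$ and $\cot\theta=b/\sqrt{\Delta}$. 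Because $\arg(y_{1\pm}^{1/\delta})=\pm\theta/\delta$ and $y_{1-}$ is the conjugate of $y_{1+}$, it again suffices to track $y_{1+}^{1/\delta}$: it lies in $\Omega_0$ iff $\theta/\delta<\pi+\theta_1$, and then automatically in $\Omega_0^-$ since $\theta>\pi/2>\pi\delta/2$ gives $\theta/\delta>\pi/2$; and whenever $y_{1+}^{1/\delta}\notin\Omega_0$ one also has $y_{1-}^{1/\delta}\notin\Omega_0$. For $|q|$ large enough the $o(\varepsilon^{2/\delta})$ correction does not affect these open sign conditions (this is the source of $Q$; the borderline $\theta/\delta=\pi/2$ is excluded).

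First I would dispatch items (1) and (2), which are direct angle counts. If $\delta\in[\pi/(\pi+\theta_1),1)$ then $(\pi+\theta_1)\delta\geq\pi>\theta$, hence $\theta/\delta<\pi+\theta_1$, so $y_{1+}^{1/\delta}\in\Omega_0^-$ and $s_{0+}(q)\in\Omega_0^-$ for every $d<-a_{4}/a_{1}$. If $\delta\in(0,\pi/(2(\pi+\theta_1))]$ then $(\pi+\theta_1)\delta\leq\pi/2<\theta$, hence $\theta/\delta>\pi+\theta_1$, so $y_{1\pm}^{1/\delta}\notin\Omega_0$ and $s_{0\pm}(q)\notin\Omega_0$ for every $d<-a_{4}/a_{1}$.

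The content is in item (3), where $\delta\in(\pi/(2(\pi+\theta_1)),\pi/(\pi+\theta_1))$, so both $\theta$ and $\phi:=(\pi+\theta_1)\delta$ lie in $(\pi/2,\pi)$. Writing $\theta=\pi-\theta'$, $\phi=\pi-\psi$ with $\theta',\psi\in(0,\pi/2)$, the membership condition $\theta<\phi$ becomes $\theta'>\psi$, i.e.\ $\tan\theta'>\tan\psi$, i.e.\ $\sqrt{\Delta}/|b|>|\tan\phi|$; squaring and using $\tan^2\phi=1/\cos^2\phi-1$ together with $\Delta+b^2=4d(a_{1}a_{4}-a_{2}a_{3})$ turns this into $4d\cos^2\phi\,(a_{1}a_{4}-a_{2}a_{3})>(a_{1}d+a_{4})^2$, which is exactly $\widetilde{H}(d)<0$ with $\widetilde{H}$ from \eqref{e:ExistenceEqn}. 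Since $\widetilde{H}$ is an upward parabola with $\widetilde{H}(-a_{4}/a_{1})=-4\cos^2\phi\,(a_{2}a_{3}-a_{1}a_{4})\,a_{4}/a_{1}<0$ — using $a_{1}a_{4}-a_{2}a_{3}=\det\A>0$, $a_{1}>0>a_{4}$ and $\cos^2\phi>0$ (as $\phi>\pi/2$) — the threshold $-a_{4}/a_{1}$ lies strictly between the two real roots $\widetilde{d}_{\delta-}^\infty<\widetilde{d}_{\delta+}^\infty$ of $\widetilde{H}$. Hence on $d<-a_{4}/a_{1}$ one has $\widetilde{H}(d)<0\Leftrightarrow d>\widetilde{d}_{\delta-}^\infty$: in this range $\theta/\delta<\pi+\theta_1$, so $y_{1+}^{1/\delta}\in\Omega_0^-$ and $s_{0+}(q)\in\Omega_0^-$; while $\widetilde{H}(d)>0\Leftrightarrow d<\widetilde{d}_{\delta-}^\infty$, where $\theta/\delta>\pi+\theta_1$, so $y_{1\pm}^{1/\delta}\notin\Omega_0$ and $s_{0\pm}(q)\notin\Omega_0$. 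This is item (3).

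The main obstacle is the sign bookkeeping in item (3): because both $\theta$ and $\phi$ are obtuse here (unlike the acute case handled in Lemma~\ref{l:RealPartNegativeLessThan1/3}), the direction of the inequality flips when converting the angular condition to the polynomial condition $\widetilde{H}(d)<0$, and one must then locate $-a_{4}/a_{1}$ relative to the roots of $\widetilde{H}$, which forces an appeal to the Turing sign conditions ($\det\A>0$, $a_{1}>0>a_{4}$). Everything else is a direct angle count (items (1), (2)) or a verbatim repeat of the trigonometry already carried out for $b>0$.
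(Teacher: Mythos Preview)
Your proof is correct and follows essentially the same approach as the paper: both reduce the problem to locating $\arg(y_{1+})\in(\pi/2,\pi)$ relative to $(\pi+\theta_1)\delta$ and then translate the angular inequality for item (3) into the polynomial condition $\widetilde{H}(d)\lessgtr0$. Your substitution $\theta=\pi-\theta'$, $\phi=\pi-\psi$ to reduce to acute angles is a clean alternative to the paper's $\arg(y_{1+})=\arctan(\sqrt{\Delta}/b)+\pi$, and your explicit check that $\widetilde{H}(-a_4/a_1)<0$ (hence $-a_4/a_1$ lies strictly between the two real roots) is a nice addition that the paper only asserts implicitly.
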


\begin{proof}
	Since $b<0$, if $y_{1-} \in \Sigma_0$, then $y_{1+} \in \Sigma_0$, but not vice versa, and if $y_{1+} \notin \Sigma_0$, then $y_{1-} \notin \Sigma_0$. Hence we only consider $y_{1+}$. It is straightforward that $\arg (y_{1+})\in\left(\frac{\pi}{2},\pi\right)$, so $\arg \left(y_{1+}^{1/\delta}\right)\in\left(\frac{\pi}{2\delta},\frac{\pi}{\delta}\right)$. 
	
	\smallskip
	First, $y_{1+}^{1/\delta} \in \Omega_0$ if $\left(\frac{\pi}{2\delta},\frac{\pi}{\delta}\right) \subseteq (-\pi+\theta_1,\pi+\theta_1) \Rightarrow \delta \geq \frac{\pi}{\pi+\theta_1}$. From $\delta \in [\frac{\pi}{\pi+\theta_1},1)$ we have $\arg\left(y_{1+}^{1/\delta}\right)\in\left(\frac{\pi}{2\delta},\frac{\pi}{\delta}\right) \subset (\frac{\pi}{2},\pi+\theta_1)$, which means $\Re\left(y_{1+}^{1/\delta}\right)<0$ if $\delta \in [\frac{\pi}{\pi+\theta_1},1)$.
	
	\smallskip
	Second, $y_{1+}^{1/\delta}\notin\Omega_0$ if $\frac{\pi}{\delta} \leq -\pi+\theta_1$ or $\frac{\pi}{2\delta} \geq \pi+\theta_1$, so we have $\delta \leq \frac{\pi}{2(\pi+\theta_1)}$, which means $y_{1+}^{1/\delta}\notin\Omega_0$ if $\delta \in (0,\frac{\pi}{2(\pi+\theta_1)}]$.
	
	\smallskip
	Next, we consider the case $\delta \in (\frac{\pi}{2(\pi+\theta_1)},\frac{\pi}{\pi+\theta_1})$: $y_{1+} \notin \Sigma_0$ if $\arg(y_{1+}) \notin ((-\pi+\theta_1)\delta,(\pi+\theta_1)\delta)$. Since $b<0$, $\Re(y_{1+})<0$ we infer $\arg (y_{1+}) \in ((\pi+\theta_1)\delta,\pi)$. From the assumption $\delta \in (\frac{\pi}{2(\pi+\theta_1)},\frac{\pi}{(\pi+\theta_1)})$ we have $(\pi+\theta_1)\delta \in \left(\frac{\pi}{2},\pi\right)$, so $\arg (y_{1+})\in\left(\frac{\pi}{2},\pi\right)$. Now $\arg (y_{1+}) = \arctan(\sqrt{\Delta}/b) + \pi > (\pi+\theta_1)\delta \Rightarrow \sqrt{\Delta}/b > \tan((\pi+\theta_1)\delta-\pi) = \tan((\pi+\theta_1)\delta) \Rightarrow \Delta < b^2\tan^2((\pi+\theta_1)\delta)$. The solution is $d<\widetilde{d}_{\delta-}^\infty$ or $d>\widetilde{d}_{\delta+}^\infty$ (omit, because $\widetilde{d}_{\delta+}^\infty>-\frac{a_{4}}{a_{1}}$). That means $y_{1+} \notin \Sigma_0$ if $d < \widetilde{d}_{\delta-}^\infty$. Whereas $y_{1+} \in \Sigma$ if $\arg(y_{1+}) \in (\frac{\pi}{2},(\pi+\theta_1)\delta) \Rightarrow \Delta > b^2\tan^2((\pi+\theta_1)\delta)$, the solutions of the latter are $\widetilde{d}_{\delta-}^\infty<d<\widetilde{d}_{\delta+}^\infty \Rightarrow \widetilde{d}_{\delta-}^\infty<d<-\frac{a_{4}}{a_{1}}$. 
\end{proof}

\begin{remark}\label{r:CoincideComplexReal}
	When $\delta=\frac{\pi}{\pi+\theta_1}$, $\tilde{c}=\cos^2(\pi)=1$, which leads to $\widetilde{H}(d)=F(d)$. Therefore the solutions to $\widetilde{H}(d)=0$ and $F(d)=0$ are equivalent, i.e., $\widetilde{d}_{\delta-}^\infty=d_{fr-}$.  Hence, the first statement in Lemma \ref{l:DiffusionRatioLessThanNecessaryCondition} implies Case 1b of Section~\ref{s:largewav}.
\end{remark}

\medskip
\underline{Case $b=0$}, i.e., $d=-\frac{a_{4}}{a_{1}}$, we have the following lemma.

\begin{lemma}\label{l:ComplexSpectrumForSmalld}
	For any $\delta \in (\frac{\pi}{2(\pi+\theta_1)},1)$, there exists a $Q>0$ such that for any $|q|>Q$ we have $s_{0+}(q)\in\Omega_0^-$ if $d=-\frac{a_{4}}{a_{1}}$, and for any $\delta\in(0,\frac{\pi}{2(\pi+\theta_1)}]$, $s_{0\pm}(q) \notin \Omega_0$ if $d=-\frac{a_{4}}{a_{1}}$.
\end{lemma}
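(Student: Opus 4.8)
The plan is to reduce this case, exactly as in Lemmas~\ref{l:RealPartNegativeLessThan1/3}--\ref{l:DiffusionRatioLessThanNecessaryCondition}, to tracking the argument of the leading-order term in the asymptotic expansion \eqref{e:AsympoticZeroSol} of $s_{0\pm}(q)$ for $|q|\gg1$. First I would set $d=-a_{4}/a_{1}$, so that $b:=a_{1}d+a_{4}=0$; as already observed in Case~1c of Section~\ref{s:largewav}, here $P_{\min}=\det(\A)>0$, hence $\Delta=4d\det(\A)>0$ and from \eqref{e:ZeroSolHigherOrderFixk} the roots $y_{1\pm}=(b\pm i\sqrt{\Delta})/(2d)=\pm i\sqrt{\Delta}/(2d)$ are purely imaginary with $\arg(y_{1+})=\pi/2$ and $\arg(y_{1-})=-\pi/2$. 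By Lemma~\ref{l:AsymptoticSolution} we have $s_{0\pm}(q)=\varepsilon^{2/\delta}y_{1\pm}^{1/\delta}+o(\varepsilon^{2/\delta})$ with $\varepsilon=1/|q|\to0$, so $|s_{0\pm}(q)|\to0$ and the argument of $s_{0\pm}(q)$ converges to that of $y_{1\pm}^{1/\delta}$, namely to $\pm\pi/(2\delta)$ on the chosen branch. Thus everything comes down to comparing $\pi/(2\delta)$ with the opening angles of $\Omega_0$ and $\Omega_0^-$.

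Next I would treat the two open sub-intervals directly. If $\delta\in(\tfrac{\pi}{2(\pi+\theta_1)},1)$, then $\pi/(2\delta)<\pi+\theta_1$, while $\pi/(2\delta)>\pi/2$ because $\delta<1$; hence $\arg(y_{1+}^{1/\delta})=\pi/(2\delta)$ lies strictly inside $(\pi/2,\pi+\theta_1)$, i.e.\ $y_{1+}^{1/\delta}\in\Omega_0^-$. Since $\Omega_0^-$ is open and $\arg(s_{0+}(q))\to\pi/(2\delta)$, there is $Q>0$ with $s_{0+}(q)\in\Omega_0^-$ for all $|q|>Q$. If instead $\delta\in(0,\tfrac{\pi}{2(\pi+\theta_1)})$, then $\pi/(2\delta)>\pi+\theta_1$, so $\arg(y_{1+}^{1/\delta})=\pi/(2\delta)\notin(-\pi+\theta_1,\pi+\theta_1)$, and likewise $\arg(y_{1-}^{1/\delta})=-\pi/(2\delta)<-(\pi+\theta_1)\notin(-\pi+\theta_1,\pi+\theta_1)$; thus $y_{1\pm}^{1/\delta}\notin\Omega_0$, and by the same openness argument there is $Q>0$ with $s_{0\pm}(q)\notin\Omega_0$ for all $|q|>Q$.

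The one delicate point is the endpoint $\delta=\tfrac{\pi}{2(\pi+\theta_1)}$ (which, by $\theta_1\in(0,\pi/2)$, occurs only for $\delta\in(1/3,1/2)$): there $\pi/(2\delta)=\pi+\theta_1$ exactly, so the leading-order term of $s_{0+}(q)$ sits precisely on the branch cut $\BC_0^{\theta_1}$ and the leading order alone is inconclusive. My plan here is to carry the expansion scheme of Appendix~\ref{s:ProofAsymptotic} one order further to obtain the first correction to $s_{0\pm}(q)$; since $\theta_1$ is chosen so that $D_\ss(\cdot,q^2)$ has no root on $\BC_0^{\theta_1}$, the root $s_{0\pm}(q)$ is genuinely off $\BC_0^{\theta_1}$ for $|q|\gg1$, and the sign of the correction term fixes the side of $\BC_0^{\theta_1}$ on which it lies, which one then checks to be the exterior of $\Omega_0$. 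I expect this next-order computation (and checking its argument against $\pi+\theta_1$ under the standing Turing conditions on $\A$) to be the main obstacle; away from the endpoint the proof is pure argument-counting already prepared by Lemma~\ref{l:AsymptoticSolution}.
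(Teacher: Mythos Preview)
Your argument on the two open sub-intervals is exactly the paper's: set $b=0$, read off $\arg(y_{1\pm})=\pm\pi/2$, and compare $\pi/(2\delta)$ with the opening angles of $\Omega_0$ and $\Omega_0^-$.

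Where you diverge is the endpoint $\delta=\tfrac{\pi}{2(\pi+\theta_1)}$. You propose pushing the expansion of Appendix~\ref{s:ProofAsymptotic} to the next order to decide on which side of $\BC_0^{\theta_1}$ the true root lands. The paper does not do this. It argues instead in the $z$-plane: for $\delta\le\tfrac{\pi}{2(\pi+\theta_1)}$ the sector $\Sigma_0$ has arguments strictly below $(\pi+\theta_1)\delta\le\pi/2$, while $\arg(y_{1+})=\pi/2$; since $\Sigma_0$ is open, $y_{1+}\notin\Sigma_0$, hence $y_{1+}^{1/\delta}\notin\Omega_0$, and the paper stops there. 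In other words, throughout Lemmas~\ref{l:RealPartNegativeLessThan1/3}--\ref{l:ComplexSpectrumForSmalld} the paper identifies $s_{0\pm}(q)$ with its leading-order term $\varepsilon^{2/\delta}y_{1\pm}^{1/\delta}$ for the purpose of deciding membership in $\Omega_0$, and does not separately justify the borderline case. Your instinct that the endpoint is the only place needing extra care is correct, and your higher-order plan would upgrade the conclusion from the leading term to the actual root $s_{0+}(q)$; the paper simply does not carry that step out.
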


\begin{proof}
	Assume that $d=-\frac{a_{4}}{a_{1}}$, i.e., $b=0$, then $\Re(y_{1\pm})=0$ and $\arg (y_{1\pm}) = \pm\pi/2$ so that again it suffices to consider $y_{1+}$. $y_{1+}^{1/\delta} \in \Omega_0$ if $\frac{\pi}{2\delta} \in (-\pi+\theta_1,\pi+\theta_1) \Rightarrow \delta > \frac{\pi}{2(\pi+\theta_1)}$, i.e., $\delta \in (\frac{\pi}{2(\pi+\theta_1)},1)$, which leads to $\arg\left(y_{1+}^{1/\delta}\right) = \frac{\pi}{2\delta} \in \left(\frac{\pi}{2},\pi+\theta_1\right)$ and implies $\Re\left(y_{1+}^{1/\delta}\right)<0$.
	
	\smallskip
	When $\delta \in (0,\frac{\pi}{2(\pi+\theta_1)}]$, $\max((\pi+\theta_1)\delta) = \pi/2$. Hence the argument of $\Sigma_0$ is less than $\pi/2$, whereas $\arg (y_{1+}) = \pi/2$, so $y_{1+}^{1/\delta}\notin\Omega_0$.
\end{proof}

\section{Proof of Theorem \ref{t:ILT-ca}}\label{s:InverseLaplaceNonzeroBranchPoint}

In the following, we omit some parts that are very similar to parts of the proof of Theorem \ref{t:ILT-ss}, see Appendix \ref{s:InverseLaplaceZeroBranchPoint}.

\medskip
We consider the inverse Laplace transform of $\fL\hat w_{j}$, $j=1,2$ for fixed $q$.
We first discuss $\fL\hat w_1(q,s)$ and the case when all roots of $D_{\ca2}(s,q^2)$ are simple. The solution $\fL\hat w_1$ can be written as
\begin{align*}
\Psi(s) :=&\ \frac{\left(s-\mu_2+d_4 q^2 (s-\mu_2)^{\frac{n}{m}}\right)\hat w_1(q,0) - d_2 q^2 (s-\mu_2)^{\frac{n}{m}}\hat w_2(q,0)}{\left(s-\mu_1+d_{1}q^2(s-\mu_1)^{\frac{n}{m}}\right)\left(s-\mu_2+d_{4}q^2(s-\mu_2)^{\frac{n}{m}}\right)-(s-\mu_1)^{\frac{n}{m}}(s-\mu_2)^{\frac{n}{m}}d_{2}d_{3}q^4}\\
=&\ \frac{\left((s-\mu_2)^{\frac{m-n}{m}} + d_4 q^2\right)\hat w_{10} - d_2 q^2 \hat w_{20}}{(s-\mu_1)^{\frac n m}\left[\left((s-\mu_1)^{\frac{m-n}{m}} + d_{1}q^2\right) \left((s-\mu_2)^{\frac{m-n}{m}} + d_{4}q^2\right) - d_2 d_3 q^4\right]},
\end{align*}
where the initial conditions are $\hat w_1(q,0) = \hat w_{10}(q)$ and $\hat w_2(q,0) = \hat w_{20}(q)$. We recall $\gamma = 1-n/m$. The inverse Laplace transform gives 
\begin{align*}
&\hat{w}_1(q,t)  = \frac{1}{2\pi i}\int_{c-i\infty}^{c+i\infty}\Psi(s)e^{st}\dif s.
\end{align*}

\medskip
Let $s-\mu_1 = R e^{i\theta}$ and denote $\mu_\Delta := \mu_1-\mu_2$. We claim $|\Psi(s)| < M/R$ for $R>0$ with suitable constant $M>0$. Indeed,
\begin{align*}
|\Psi(s)| &= \left| \frac{\left((Re^{i\theta} + \mu_\Delta)^{\frac{m-n}{m}} + d_4 q^2\right)\hat w_{10} - d_2 q^2 \hat w_{20}}{(Re^{i\theta})^{\frac n m}\left[\left((Re^{i\theta})^{\frac{m-n}{m}} + d_{1}q^2\right) \left((Re^{i\theta} + \mu_\Delta)^{\frac{m-n}{m}} + d_{4}q^2\right) - d_2 d_3 q^4\right]} \right|\\
&\leq \left| \frac{R^{\frac{m-n}{m}}}{R^{1+\frac{m-n}{m}}} \right|  \left| \frac{\left((e^{i\theta} + \mu_\Delta R^{-1})^{\frac{m-n}{m}} + R^{-\frac{m-n}{m}}d_4 q^2\right)\hat w_{10} - R^{-\frac{m-n}{m}}d_2 q^2 \hat w_{20}} {e^{i\theta\frac n m} \left(e^{i\theta\frac{m-n}{m}} + R^{-\frac{m-n}{m}}d_{1}q^2\right) \left((e^{i\theta} + \mu_\Delta R^{-1})^{\frac{m-n}{m}} + R^{-\frac{m-n}{m}}d_{4}q^2\right) - R^{-\frac{2(m-n)}{m}}d_2 d_3 q^4} \right|\\
&\leq R^{-1} M,
\end{align*}
where the last inequality is satisfied since the second term in the first inequality is continuous in $R$ and has a limit as $R\to\infty$.

\medskip
\underline{Case \textbf{cc}}: We calculate $\hat w(q,t)$ and choose the branch cuts $\BC_{\mu_1}^{-\theta_1}$, $\BC_{\mu_2}^{\theta_2}$ with corresponding principal branch $\Omega_\ca^\cc$ from Section~\ref{s:turingbifurcation}, cf.\ Fig.~\ref{f:ILTModelB-cc} for details. Then $\hat w_1$ can be written as
\begin{align*}
\hat w_1(q,t)  &= \frac{1}{2\pi i}\lim_{R\to\infty}\int_{JA}\Psi(s)e^{st}\dif s\\
& = \frac{1}{2\pi i}\left(\lim_{\begin{subarray}{c}
	R\to\infty\\
	\epsilon\to0
	\end{subarray}}\oint_{\Gamma}-\lim_{R\to\infty}\int_{AB+EF+IJ}-\lim_{\begin{subarray}{c}
	R\to\infty\\
	\epsilon\to0
	\end{subarray}}\int_{BC+DE}-\lim_{\epsilon\to0}\int_{CD+GH}-\lim_{\begin{subarray}{c}
	R\to\infty\\
	\epsilon\to0
	\end{subarray}}\int_{FG+HI}\right)\Psi(s)e^{st}\dif s.
\end{align*}
Since $\left|\Psi(s)\right|< M/R$, the integrals along $AB$, $EF$, $IJ$ vanish, cf.\ \cite[Theorem 7-1]{Spiegel1965}. Similar to the proof of Theorem \ref{t:ILT-ss}, the integral along $CD$, $GH$ vanish as well.
\begin{figure}[t]
	\centering
	\includegraphics[width=0.45\linewidth]{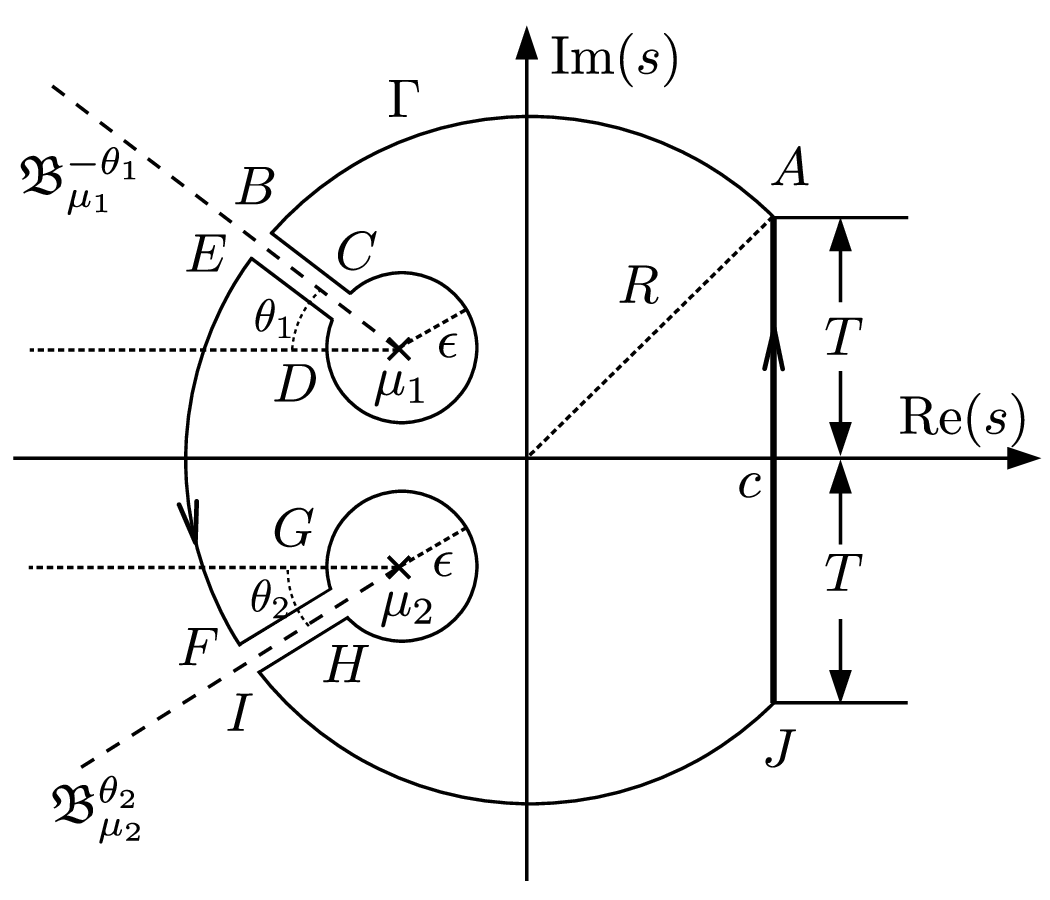}
	\caption{Notation and geometry of the integration contours for case \textbf{cc}.}
	\label{f:ILTModelB-cc}
\end{figure}

\medskip
Along $BC$: $s-\mu_1=re^{i(\pi-\theta_1)}$, $\dif s=-e^{-i\theta_1}\dif r$, along $DE$: $s-\mu_1=re^{i(-\pi-\theta_1)}$, $\dif s=-e^{-i\theta_1}\dif r$ so that
\begin{align*}
I_{BC} &:= \frac{1}{2\pi i}\lim_{\begin{subarray}{c}
	R\to\infty\\
	\epsilon\to0
	\end{subarray}}\int_{BC}\Psi(s)e^{st}\dif s = \frac{1}{2\pi i}\int_0^\infty\Psi\left(\mu_1+re^{i(\pi-\theta_1)}\right)e^{(\mu_1-re^{-i\theta_1})t-i\theta_1}\dif r,\\
I_{DE} &:= \frac{1}{2\pi i}\lim_{\begin{subarray}{c}
	R\to\infty\\
	\epsilon\to0
	\end{subarray}}\int_{DE}\Psi(s)e^{st}\dif s = -\frac{1}{2\pi i}\int_0^\infty\Psi\left(\mu_1+re^{i(-\pi-\theta_1)}\right)e^{(\mu_1-re^{-i\theta_1})t-i\theta_1}\dif r.
\end{align*}
With $f_1(r):=\Psi\left(\mu_1+re^{i(\pi-\theta_1)}\right)-\Psi\left(\mu_1+re^{i(-\pi-\theta_1)}\right)$, combining the above two integrals gives 
\begin{align*}
I_{BC} + I_{DE} = \frac{1}{2\pi i} e^{\mu_1 t-i\theta_1} \int_0^\infty f_1(r) e^{-r e^{-i\theta_1}t} \dif r.
\end{align*}
Since there is no pole on the branch cut and $\lim_{r\to\infty}f_1(r) = 0$, for any $r_0>0$ we have $|f_1(r)|$ is uniformly bounded. Hence, for any $r_0>0$, $\left| \int_{r_0}^\infty f_1(r) e^{-r e^{-i\theta_1}t} \dif r \right| \leq C t^{-1} e^{-r_0 t \cos\theta_1}$ for a constant $C>0$. Denote $\tilde f_1(r) := f_1(r)\cdot r^{n/m}$, then Taylor expansion near $r=0$ for $q\neq0$ gives $\tilde f_1(r) = A_1 + \cO(r^{1/m})$ with
\begin{align*}
A_1:= \tilde f_1(0) =  -\frac{\left((\mu_\Delta)^{1-n/m}+ d_4q^2\right) \hat w_{10} - d_2 q^2\hat w_{20}}{d_1 q^2((\mu_\Delta)^{1-n/m} + d_4 q^2) - d_2 d_3 q^4} e^{i\theta_1 n/m} 2 i \sin(\pi n /m).
\end{align*}
Therefore, for fixed $q\neq 0$, we obtain
\begin{align*}
\int_0^{r_0} f_1(r) e^{-r e^{-i\theta_1}t} \dif r &= \int_0^{r_0} \tilde f_1(r) r^{-\frac n m} e^{-r e^{-i\theta_1}t} \dif r\\
&= \int_0^{r_0} A_1 r^{-\frac n m} e^{-r e^{-i\theta_1}t} \dif r + \cO\left( \int_0^{r_0} r^{-\frac n m + \frac 1 m} e^{-r e^{-i\theta_1}t} \dif r \right),
\end{align*}
and analogous to the computation of \eqref{e:bcestimate}, we have
\begin{align}\label{e:BCestimate-ca}
I_{BC} + I_{DE} = \frac{A_1}{2\pi i}\Gamma(1-n/m) e^{-i\theta_1 n/m} e^{\mu_1 t}t^{-1+n/m} + \cO\left(e^{\mu_1 t} t^{-1+\frac n m-\frac 1 m}\right).
\end{align}
\begin{remark}
We emphasise that this decomposition is not uniformly valid in $q\approx 0$. At $q=0$, we have $\Psi(s)|_{q=0} = \hat w_{10}(s-\mu_1)^{-1}$, hence $f_1(r)|_{q=0} = \hat w_{10} r^{-1}\left(e^{i(-\pi+\theta_1)}-e^{i(\pi+\theta_1)}\right)$. This vanishes for all $r\in(0,+\infty)$ which means $I_{BC}+I_{DE} = 0$, consistent with the exponential decay at $q=0$.
\end{remark}

In the remainder of this proof, we fix $q\neq0$.

\medskip
Along $\Gamma$: If $s\notin\Omega_\ca^\cc$, then $\lim_{R\to\infty,\,\epsilon\to0}\oint_{\Gamma}\Psi(s)e^{st}\dif s=0$. For $s\in\Omega_\ca^\cc$ the residue theorem gives,
\begin{align*}
I_\Gamma := \frac{1}{2\pi i}\lim_{\begin{subarray}{c}
	R\to\infty\\
	\epsilon\to0
	\end{subarray}}\oint_{\Gamma}\Psi(s)e^{st}\dif s 
= \sum_{\xi_j\in\Omega_\ca^\cc}\Res_{\xi_j}\left(\Psi(s)e^{st}\right) = \sum_{\xi_j\in\Omega_\ca^\cc}\left(\lim_{s\to\xi_j}(s-\xi_j)\Psi(s)\right)e^{\xi_j t}.
\end{align*}

\medskip
Along $FG$: $s-\mu_2=re^{i(\pi+\theta_2)}$, $\dif s=-e^{i\theta_2}\dif r$; along $HI$: $s-\mu_2=re^{i(-\pi+\theta_2)}$, $\dif s=-e^{i\theta_2}\dif r$. Analogous to the computation of $I_{BC}+I_{DE}$, with $f_2(r) := \Psi\left(\mu_2+re^{i(\pi+\theta_2)}\right) - \Psi\left(\mu_2+re^{i(-\pi+\theta_2)}\right)$, we obtain
\begin{align*}
I_{FG}+I_{HI} = \frac{1}{2\pi i}e^{\mu_2 t + i\theta_2} \int_0^\infty f_2(r) e^{-r e^{i\theta_2} t} \dif r. 
\end{align*} 
In contrast to $f_1(r)$, the function $f_2(r)$ is bounded for $r\in[0,\infty)$ and $\lim_{r\to0}f_2(r) = 0 = \lim_{r\to\infty}f_2(r).$ Analogous to the computation of \eqref{e:bcestimate} it follows that
\begin{align}
I_{FG}+I_{HI} &= \frac{B_1 e^{-i\theta_2(1-n/m)}}{2\pi i}\Gamma(2-n/m) t^{-2+n/m} e^{\mu_2 t} + \cO(t^{-2+n/m-1/m} e^{\mu_2 t}),\label{e:BCestimate-ca2}\\
B_1 &:= -\frac{d_2(d_3 q^2 \hat w_{10} - ((-\mu_\Delta)^{1-n/m} + d_1 q^2) \hat w_{20})}{(-\mu_\Delta)^{n/m}q(d_2 d_3 q^2-d_4((-\mu_\Delta)^{1-n/m}+d_1 q^2))^2} e^{i\theta_2(1-n/m)} 2i \sin(\pi(1-n/m)),\nonumber
\end{align}
whose rate if decay is larger than that of $I_{BC}+I_{DE}$.

\medskip
All in all, the first component of the solution can be written as 
\begin{align}\label{e:sol-ca}
\hat w_1(q,t) &= I_\Gamma - (I_{BC}+I_{DE}) - (I_{FG}+I_{HI})\nonumber\\
&= C_{\exp,s_1} e^{s_1 t} + C_{\bp,1} e^{\mu_1 t}t^{-\gamma} + \cO(e^{\mu_1 t}t^{-1+\frac n m - \frac 1 m}),\end{align}
where $s_1 := \argmax\{\Re(\xi_j):\xi_j\in\Omega_\ca^\cc,\,C_{\exp,s_1}\neq0\}$ for the largest exponential rate and $I_{FG}+I_{HI}$ has been absorbed into the last term as it is of order $e^{\mu_2 t}t^{-1-\gamma}$. Specifically, the coefficients $C_{\bp,1}$ is as given in the theorem statement, and
\begin{align*}
C_{\exp,s_1} &:= \lim_{s\to s_1} (s-s_1)\Psi(s).
\end{align*}

Since $\Re(\mu_1)<0$, for $\Re (s_1) \geq \Re(\mu_1)$ the first term in \eqref{e:sol-ca} is dominant while for $\Re(s_1) < \Re(\mu_1)$ it is the second term in \eqref{e:sol-ca}, and $\hat w_1$ decays as $t^{-\gamma}e^{\mu_1 t}$. Note that if $\{s:s\in\Omega_\ca^\cc, D_{\ca}(s,q^2)=0,q\in\RR\}=\emptyset$, then the decay fully depends on the second term. 

\begin{remark}\label{r:complexconjugatemultiplepoles}
	In case \textbf{cc}, if the poles besides $\mu_1,\,\mu_2$ are multiple, then the integral along $\Gamma$ is given by 
\begin{align*}
I_\Gamma &= \sum_{\xi_j\in\Omega_\ca^\cc}\Res_{\xi_j}\left(\Psi(s)e^{st}\right) = \sum_{\xi_j\in\Omega_\ca^\cc}\left(\frac{1}{(k_j-1)!}\lim_{s\to\xi_j}\left( \frac{\dif}{\dif s} \right)^{k_j-1}\left( (s-\xi_j)^{k_j} \Psi(s)e^{s t}\right)\right)\\
&= \sum_{\xi_j\in\Omega_\ca^\cc}\left(\frac{1}{(k_j-1)!}\left(\lim_{s\to\xi_j} \sum_{k=0}^{k_j-1} \frac{t^k (k_j-1)!}{k!(k_j-1-k)!} \left(\frac{\dif}{\dif s}\right)^{k_j-1-k}\left((s-\xi_j)^{k_j}\Psi(s)\right) \right) e^{\xi_j t} \right)\\
&= C_{\exp,s_0,\rho} t^{\rho-1} e^{s_0 t} + \cO(t^{\rho-2} e^{s_0 t}).
\end{align*}
The integrals along the branch cuts are given by \eqref{e:BCestimate-ca}, \eqref{e:BCestimate-ca2}, and those along other paths vanish. Hence, if $\Re(s_0)\geq\Re(\mu_1)$, then $\hat w_1$ behaves exponentially as $t^{\rho-1}e^{s_0 t}$, where $s_0 := \argmax\{\Re(\xi_j):\xi_j\in\Omega_\ca^\cc,\,C_{\exp,s_0,\rho}\neq0\}$ for the largest exponential rate and $\rho$ is the multiplicity of $s = s_0$. In particular,
\begin{align*}
C_{\exp,s_0,\rho} := \frac{1}{(\rho-1)!} \lim_{s\to s_0}(s-s_0)^\rho \Psi(s).
\end{align*} 
If $\Re(s_0)<\Re(\mu_1)$, then $\hat w_1$ decays as $t^{-\gamma}e^{\mu_1t}$.
\end{remark}

For the second component $\hat w_2(q,t)$, the integral along the branch cut $\BC_{\mu_2}^{\theta_2}$ has the form
\begin{gather*}
\frac{A_2}{2\pi i}\Gamma(1-n/m) e^{i\theta_2 n/m} e^{\mu_2 t}t^{-1+n/m} + \cO\left(e^{\mu_2 t} t^{-1+\frac n m-\frac 1 m}\right),\\
A_2 := -\frac{\left((-\mu_\Delta)^{1-n/m}+ d_1 q^2\right)\hat w_{20} - d_3q^2 \hat w_{10}}{d_4 q^2((-\mu_\Delta)^{1-n/m} + d_1 q^2) - d_2 d_3 q^4} e^{-i\theta_2 n/m} 2 i \sin(\pi n /m).
\end{gather*}
Analogous to the discussion in Remark~\ref{r:nonzero-ss}, if the initial data $(\hat w_{10},\hat w_{20})$ is not in the kernel of the matrix 
\[
\begin{pmatrix}
\mu_\Delta^\gamma + d_4 q^2 & -d_2 q^2\\
-d_3 q^2 & (-\mu_\Delta)^\gamma + d_1 q^2
\end{pmatrix}
\]
then the coefficients $A_1$ and $A_2$ satisfy $(A_1,A_2)\neq(0,0)$.  
Proceeding as above for $\hat w_1$ we find $C_{\bp,2}$ as claimed in Theorem \ref{t:ILT-ca} and $(C_{\bp,1},C_{\bp,2})\neq (0,0)$ generically.

\begin{remark}
In case \textbf{nr}, the calculation is completely analogous to the case \textbf{cc}, so we omit the details. Since here $\mu_1>\mu_2$, the second component of the solution $\hat w_2(q,t)$ has the form
\begin{align}
\hat w_2(q,t) = C_{\exp,s_0,\rho}t^{\rho-1} e^{s_0 t} + C_{\bp,2} e^{\mu_2 t}t^{-\gamma} + C_{\bp,3}e^{\mu_1 t}t^{-1-\gamma} + \cO(e^{\mu_1} t^{-1-\gamma-1/m}),
\end{align}
with coefficients given in the theorem statement. In particular, the coefficient $C_{\bp,3}$ is derived from the integral along $BC$ and $DE$.
In contrast to \eqref{e:sol-ca}, the term of order $e^{\mu_2 t}t^{-\gamma}$ is not leading order for $\Re(s_0)<\Re(\mu_1)$. Instead, the solution $\hat w_2$ behaves as $e^{\mu_1 t} t^{-1-\gamma}$ for $\Re(s_0)<\Re(\mu_1)$. 
\end{remark}


\begin{thebibliography}{10}

\small

\bibitem{Fedotov2010}
S.~Fedotov.
\newblock Non-{Markovian} random walks and nonlinear reactions: Subdiffusion
  and propagating fronts.
\newblock {\em Physical Review E}, 81(1):011117, 2010.

\bibitem{Henry2005}
B.~I. Henry, T.~A.~M. Langlands, and S.~L. Wearne.
\newblock Turing pattern formation in fractional activator-inhibitor systems.
\newblock {\em Physical Review E}, 72(2):026101, 2005.

\bibitem{Henry2006}
B.~I. Henry, T.~A.~M. Langlands, and S.~L. Wearne.
\newblock Anomalous diffusion with linear reaction dynamics: from continuous
  time random walks to fractional reaction-diffusion equations.
\newblock {\em Physical Review E}, 74(3):031116, 2006.

\bibitem{Henry2000}
B.~I. Henry and S.~L. Wearne.
\newblock Fractional reaction--diffusion.
\newblock {\em Physica A: Statistical Mechanics and its Applications},
  276(3):448--455, 2000.

\bibitem{Henry2002}
B.~I. Henry and S.~L. Wearne.
\newblock Existence of {Turing} instabilities in a two-species fractional
  reaction-diffusion system.
\newblock {\em SIAM Journal on Applied Mathematics}, 62(3):870--887, 2002.

\bibitem{Kilbas2006}
A.~A. Kilbas, H.~M. Srivastava, and J.~J. Trujillo.
\newblock {\em Theory and Applications of Fractional Differential Equations},
  volume 204 of {\em North-Holland Mathematics Studies}.
\newblock Elsevier Science, Amsterdam, 1st edition, 2006.

\bibitem{Langlands2008}
T.~A.~M. Langlands, B.~I. Henry, and S.~L. Wearne.
\newblock Anomalous subdiffusion with multispecies linear reaction dynamics.
\newblock {\em Physical Review E}, 77(2):021111, 2008.

\bibitem{Mainardi2001}
F.~Mainardi, Y.~Luchko, and G.~Pagnini.
\newblock The fundamental solution of the space-time fractional diffusion
  equation.
\newblock {\em Fractional Calculus and Applied Analysis}, 4(2):153--192, 2001.

\bibitem{MMAK2019}
W.~McLean, K.~Mustapha, R.~Ali, and O.~Knio.
\newblock Well-posedness of time-fractional advection-diffusion-reaction
  equations.
\newblock {\em Fractional Calculus and Applied Analysis}, 22(4):918--944, 2019.

\bibitem{Mendez2014}
V.~M\'{e}ndez, D.~Campos, and F.~Bartumeus.
\newblock {\em Stochastic Foundations in Movement Ecology: Anomalous Diffusion,
  Front Propagation and Random Searches}.
\newblock Springer Series in Synergetics. Springer-Verlag Berlin Heidelberg,
  1st edition, 2014.

\bibitem{Metzler2000}
R.~Metzler and J.~Klafter.
\newblock The random walk's guide to anomalous diffusion: a fractional dynamics
  approach.
\newblock {\em Physics Reports}, 339(1):1--77, 2000.

\bibitem{Metzler2004}
R.~Metzler and J.~Klafter.
\newblock The restaurant at the end of the random walk: recent developments in
  the description of anomalous transport by fractional dynamics.
\newblock {\em Journal of Physics A: Mathematical and General},
  37(31):R161--R208, 2004.

\bibitem{Murray2003}
J.~D. Murray.
\newblock {\em Mathematical Biology II: Spatial Models and Biomedical
  Applications}, volume~18 of {\em Interdisciplinary Applied Mathematics}.
\newblock 3rd edition, 2003.

\bibitem{Nec2007}
Y.~Nec and A.~A. Nepomnyashchy.
\newblock Linear stability of fractional reaction-diffusion systems.
\newblock {\em Mathematical Modelling of Natural Phenomena}, 2(2):77--105,
  2007.

\bibitem{Nec2007a}
Y.~Nec and A.~A. Nepomnyashchy.
\newblock Turing instability in sub-diffusive reaction--diffusion systems.
\newblock {\em Journal of Physics A: Mathematical and Theoretical},
  40(49):14687, 2007.

\bibitem{Nec2008}
Y.~Nec and A.~A. Nepomnyashchy.
\newblock Turing instability of anomalous reaction--anomalous diffusion
  systems.
\newblock {\em European Journal of Applied Mathematics}, 19(3):329--349, 2008.

\bibitem{Nepomnyashchy2016}
A.~A. Nepomnyashchy.
\newblock Mathematical modelling of subdiffusion-reaction systems.
\newblock {\em Mathematical Modelling of Natural Phenomena}, 11(1):26--36,
  2016.

\bibitem{Podlubny1998}
I.~Podlubny.
\newblock {\em Fractional Differential Equations: An Introduction to Fractional
  Derivatives, Fractional Differential Equations, to Methods of Their Solution
  and Some of Their Applications}, volume 198 of {\em Mathematics in Science
  and Engineering}.
\newblock Academic press, San Diego, 1st edition, 1998.

\bibitem{Samko1993}
S.~G. Samko, A.~A. Kilbas, and O.~I. Marichev.
\newblock {\em Fractional Integrals and Derivatives: Theory and Applications}.
\newblock Gordon and Breach Science Publishers, Switzerland, 1993.

\bibitem{Sandstede2002}
B.~Sandstede.
\newblock Chapter 18 - stability of travelling waves.
\newblock In B.~Fiedler, editor, {\em Handbook of Dynamical Systems}, volume~2
  of {\em Handbook of Dynamical Systems}, pages 983 -- 1055. Elsevier Science,
  2002.

\bibitem{Seki2003a}
K.~Seki, M.~Wojcik, and M.~Tachiya.
\newblock Fractional reaction-diffusion equation.
\newblock {\em The Journal of chemical physics}, 119(4):2165--2170, 2003.

\bibitem{Seki2003}
K.~Seki, M.~Wojcik, and M.~Tachiya.
\newblock Recombination kinetics in subdiffusive media.
\newblock {\em The Journal of chemical physics}, 119(14):7525--7533, 2003.

\bibitem{Spiegel1965}
M.~R. Spiegel.
\newblock {\em Theory and Problems of Laplace Transforms}.
\newblock Schaum's Outline. McGraw-Hill, New York, 1st edition, 1965.

\bibitem{Vergara2017}
V.~Vergara and R.~Zacher.
\newblock Stability, instability, and blowup for time fractional and other
  nonlocal in time semilinear subdiffusion equations.
\newblock {\em Journal of Evolution Equations}, 17(1):599--626, 2017.

\bibitem{Yuste2004}
S.~B. Yuste, L.~Acedo, and K.~Lindenberg.
\newblock Reaction front in an {A} + {B} $\to$ {C} reaction-subdiffusion
  process.
\newblock {\em Physical Review E}, 69(3):036126, 2004.


\end{thebibliography}
\end{document}